\documentclass{article} 
\usepackage[utf8]{inputenc}
\usepackage[english]{babel} 
\usepackage[T1]{fontenc}
\usepackage{natbib}
\usepackage{xcolor}
\usepackage{amsfonts} 
\usepackage{amsthm}
\usepackage{amsmath}
\usepackage{amssymb} 
\usepackage{mathrsfs}
\usepackage{graphicx}  
\usepackage{pgf,tikz,pgfplots}
\usepackage[all]{xy}
\usepackage[top=4cm, bottom=4cm, left=4cm, right=4cm]{geometry}
\pgfplotsset{compat=1.18}

\newcommand{\bigtimes}{\mathop{\vcenter{\hbox{\Large$\times$}}}\displaylimits}

\usepackage{comment}
\newtheorem{lem}{Lemma}[section]
\newtheorem{prop}{Proposition}[section]
\newtheorem{theo}{Theorem}[section]
\newtheorem{coro}{Corollary}[section]
\newtheorem*{thm}{Theorem}
\newtheorem{conj}{Conjecture}

\theoremstyle{definition}
\newtheorem{definition}{Definition}[section]
\newtheorem{ex}{Example}[section]
\newtheorem{rem}{Remark}[section]

\newcommand{\Irr}[1]{\mathrm{Irr} (#1)}

\newcommand{\quest}[1]{\textcolor{red}{\underline{Question:} #1} }

\newcommand{\GL}[1]{\mathrm{GL}_#1}
\newcommand{\revdots}{\mathinner
{\mkern1mu\raise1pt\vbox{\kern7pt\hbox{.}}
\mkern2mu\raise4pt\hbox{.}
\mkern2mu\raise7pt\hbox{.}\mkern1mu}}

\title{A trace Paley-Wiener theorem for $\GL{n}(F)\backslash \GL{n}(E)$, with $E/F$ a quadratic extension of non-Archimedean local fields}
\author{Juliette Coutens}
\date{October 9, 2026}

\begin{document}
\maketitle

\tableofcontents

\pagebreak

\section{Introduction}

In general, the study of relative characters and the associated Paley–Wiener theorems arises naturally within the framework of the relative Langlands program. These objects appear not only on the spectral side of relative trace formulas, but also in the conjectures of Sakellaridis–Venkatesh, which connect automorphic periods with $L$-functions \cite{SV}. The overarching expectation is that the poles of relative characters are closely related to the poles of the local $L$-factors attached to the underlying spherical variety.

We focus here on a particular symmetric space. The main result of this paper is a generalization to a specific symmetric space of the trace Paley--Wiener theorem for reductive $p$-adic groups of Bernstein, Deligne, and Kazhdan \cite{BDK}. In the general relative setting, Delorme, Harinck, and Sakellaridis established a matrix Paley--Wiener theorem for the Schwartz space of spherical varieties \cite{DHS}. Let $E/F$ be a quadratic extension of local non-Archimedean fields. In this work, we prove a scalar Paley--Wiener theorem for relative Bessel distributions on the symmetric space $\mathrm{GL}_n(F)\backslash \mathrm{GL}_n(E)$. These distributions are relative characters of the form
\[
\pi \mapsto I_\pi(f), \qquad f\in C_c^\infty(\mathrm{GL}_n(E)),
\]
as $\pi$ ranges over the $\mathrm{GL}_n(F)$-distinguished, irreducible tempered representations of $\mathrm{GL}_n(E)$. They are constructed from a $\mathrm{GL}_n(F)$-invariant functional together with a Whittaker functional. As in the group case, the proof relies fundamentally on the theory of the Bernstein center. Another key ingredient is the Plancherel decomposition of $\mathrm{GL}_n(F)\backslash \mathrm{GL}_n(E)$ \cite{RBPPlanch}, together with the local Langlands correspondence and the base-change map arising from the quasi-split unitary group of rank $n$,
\[
U(n)=\{g\in \mathrm{GL}_n(E)\mid {}^tg^c w_n g = w_n\},
\]
where $w_n$ is the antidiagonal matrix with $1$'s on the antidiagonal and $0$'s elsewhere.

Let $H=\mathrm{GL}_n(F)$ and $G=\mathrm{GL}_n(E)$. We write $\mathrm{Temp}_H(G)$ for the set of isomorphism classes of tempered, irreducible representations of $G$ that are $H$-distinguished, i.e. those admitting a nonzero $H$-invariant linear form. Likewise, we denote by $\mathrm{Temp_{gen}}(U(n))$ the set of tempered, irreducible, generic representations of $U(n)$ for a fixed Whittaker datum. Let $\Omega_{\mathrm{gen}}(U(n))$ be the variety of cuspidal supports of irreducible generic representations of $U(n)$, and let $\mathfrak{Z}_{\mathrm{gen}}(U(n))$ denote its algebra of regular functions, supported on finitely many connected components. The base-change map
\[
BC_n : \mathrm{Temp}(U(n)) \longrightarrow \mathrm{Temp}(G),
\]
defined to be \emph{stable} when $n$ is odd and \emph{unstable} when $n$ is even, induces a bijection between $\mathrm{Temp_{gen}}(U(n))$ and $\mathrm{Temp}_H(G)$.

The main theorem of this paper (Theorem~\ref{mainT}) can now be stated as follows.

\begin{thm}
    The base-change map induces an isomorphism
    \[
    \bigl\{\pi \in \mathrm{Temp}_H(G) \mapsto I_\pi(f),\ f\in C_c^\infty(G)\bigr\}
    \;\cong\; 
    \mathfrak{Z}_{\mathrm{gen}}(U(n)).
    \]
    Equivalently,
    \[
    \bigl\{\rho \in \mathrm{Temp_{gen}}(U(n)) \mapsto I_{BC_n(\rho)}(f),\ f\in C_c^\infty(G)\bigr\}
    \;=\;
    \mathfrak{Z}_{\mathrm{gen}}(U(n)).
    \]
\end{thm}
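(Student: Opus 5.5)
The plan is to prove the two inclusions separately, following the strategy of Bernstein--Deligne--Kazhdan but with the Plancherel decomposition of $H\backslash G$ replacing the group Plancherel formula.

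\textbf{Setting up the relative characters.} For $\pi\in\mathrm{Temp}_H(G)$ I fix an $H$-invariant form $\ell_\pi$, normalized through the Plancherel decomposition of \cite{RBPPlanch}, and a Whittaker functional $\lambda_\pi$. I then set
\[
I_\pi(f)=\sum_{v}\ell_\pi(\pi(f)v)\,\overline{\lambda_\pi(v)},
\]
where $v$ runs over an orthonormal basis. Each $\pi\in\mathrm{Temp}_H(G)$ is written as $BC_n(\rho)$. On each connected component, $\rho$ runs over a family $\mathrm{Ind}_P^{U(n)}(\sigma\otimes\chi)$, with $\sigma$ discrete series and $\chi$ unitary unramified. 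Correspondingly, $\pi$ is induced from a Levi of $G$ whose blocks are of two kinds: pairs $\tau\boxtimes{}^c\tau^{\vee}$, and distinguished discrete series coming from the factor $U(m)$.

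\textbf{Step 1 (regularity: the image lies in $\mathfrak{Z}_{\mathrm{gen}}(U(n))$).} The form $\ell_\pi$ on such an induced representation is given by an open-orbit integral, meromorphically continued in $\chi$. The Whittaker functional is a Jacquet integral and is holomorphic. I would show, component by component, three things:
\begin{enumerate}
\item After the Plancherel normalization, $\chi\mapsto I_{\pi_\chi}(f)$ extends to a regular function on the complexified torus of unramified characters. The normalization factors are the ratios of $L$-functions (Asai versus Rankin--Selberg) coming from \cite{RBPPlanch}, and they cancel the poles of the open-orbit integral.
\item This function is invariant under the relative Weyl group, which is the Weyl group of the $U(n)$-component. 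This follows from the functional equations of $\ell$ and $\lambda$ under intertwining operators, since the normalized scalars equal $1$.
\item Only finitely many components contribute, by compactness of the support of $f$ and smoothness of $f$.
\end{enumerate}
Together these give a function in $\mathfrak{Z}_{\mathrm{gen}}(U(n))$.

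\textbf{Step 2 (surjectivity).} Let $z\in\mathfrak{Z}_{\mathrm{gen}}(U(n))$. I pull $z$ back along base change to a function on the corresponding distinguished components of $G$. I view it as an element of the Bernstein center of $G$ restricted to the relevant components; this is possible because $BC_n$ is compatible with cuspidal supports and is algebraic on components. Choosing a single $f_0$ on one component with $I_{\pi}(f_0)$ nonvanishing, I set $f=z*f_0$, which gives $I_\pi(f)=z(\pi)I_\pi(f_0)$. It remains to make $I_\pi(f_0)$ a nonvanishing constant, or at least an invertible element, on the component. For this I take $f_0$ supported near the identity and adapted to a type, so that $I_\pi(f_0)=\ell_\pi(v_\chi)\overline{\lambda(v_\chi)}$ with $v_\chi$ a section supported on the open cell. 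With such a choice both factors are explicit and nonvanishing; this is a Jacquet-type nonvanishing argument on the open orbit. Using partitions of unity over components, and the fact that $\mathfrak{Z}_{\mathrm{gen}}$ is a module over the center, finitely many such $f$ generate everything.

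\textbf{Main obstacle.} I expect the hardest point to be the holomorphy in Step 1. One must show that the normalized $H$-invariant functionals in the Plancherel formula have no poles on the whole complex torus, not just on the unitary axis. Equivalently, the poles of the open-orbit periods must be exactly those of the Asai $L$-factors, which the Plancherel measure absorbs. My first attempt would be a reduction to discrete series blocks via transitivity of the period, then the known holomorphy of Rankin--Selberg and Asai local zeta integrals, matched through the local Langlands correspondence for $U(n)$ and the parity (stable or unstable) of $BC_n$.
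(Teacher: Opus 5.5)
\textbf{There is a genuine gap, and it is the last sentence of Step 1}, ``Together these give a function in $\mathfrak{Z}_{\mathrm{gen}}(U(n))$''. You parametrize $\mathrm{Temp_{gen}}(U(n))$ by Harish-Chandra families $\mathrm{Ind}_P^{U(n)}(\sigma\otimes\chi)$ with $\sigma$ a discrete series. You then prove regularity and Weyl invariance in $\chi$ on each family separately. That yields an element of $\prod_\sigma\mathbb{C}[\mathcal{X}(M_\sigma)]^{W_\sigma}$, a Paley--Wiener-type space on tempered families. It is not a regular function on the variety of cuspidal supports, for two reasons:
\begin{itemize}
\item One Bernstein component of $\Omega_{\mathrm{gen}}(U(n))$ contains several such families, e.g.\ the unitary principal series and a Steinberg-type family. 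The non-supercuspidal ones land on proper closed subvarieties at non-unitary points of the cuspidal torus.
\item Nothing in your argument shows that $I_{BC_n(\gamma)}(f)$ depends only on $\mathbf{Cs}(\gamma)$, let alone that the pieces glue to one regular function. For ordinary characters, the BDK setting you invoke, this is false.
\end{itemize}
The missing idea is to work with families induced from \emph{supercuspidal} data. Take $\gamma$ with cuspidal support $[M,\sigma\otimes\lambda]$, $\sigma$ supercuspidal unitary and $\lambda$ real, in general nonzero. Three things are then needed:
\begin{enumerate}
\item Continue $\mu\mapsto I_{i_Q^G(BC_n(\sigma)\otimes\mu)}(f)$ off the unitary axis.
\item Prove this continuation is regular on the base-change subtorus $\mathcal{X}(M)$. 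The paper uses Bernstein's continuation principle, regularity of $\beta^\rho_\mu/L(1,i_Q^G(\rho\otimes\mu),\mathrm{As}^+)$, and a Hartogs-type restriction lemma, which applies because the Asai polar hyperplanes meet $\mathcal{X}(M)$ in distinct hyperplanes.
\item Show that its value at the anti-dominant point $\lambda$ is exactly $I_{BC_n(\gamma)}(f)$.
\end{enumerate}
The third point is the heart of the theorem. It uses that $W^1_{f,\cdot}$ only sees the generic subquotient $\pi^+_\lambda=BC_n(\gamma)$, which is a quotient at anti-dominant $\lambda$. It also uses multiplicity one of $\mathrm{Hom}_H(i_Q^G(BC_n(\sigma)\otimes\lambda),\mathbb{C})$ at such $\lambda$, and a normalization check that the proportionality constant is $1$. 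Your announced main obstacle, holomorphy of Plancherel-normalized open-orbit periods on each complexified tempered family, does not touch this. You would also have to check that your $\ell_\pi$ agrees with the $\beta$ defining $I_\pi$ in the statement, since rescaling by a non-unit function of $\pi$ changes the image.

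\textbf{Step 2 has a smaller gap.} Regarding $z$ as an element of $\mathfrak{Z}(G)$ requires every regular function on $\Omega_{\mathrm{gen}}(U(n))$ to be the restriction of one on $\Omega(G)$. That is, $CBC\colon\Omega_{\mathrm{gen}}(U(n))\to\Omega(G)$ must be a closed immersion; compatibility with cuspidal supports and algebraicity are not enough. This needs two inputs:
\begin{itemize}
\item injectivity on generic supports, i.e.\ uniqueness of a conjugate-orthogonal $\varphi$ with prescribed $\varphi^{ss}$ and $L(s,\varphi,\mathrm{As}^-)$ regular at $s=1$;
\item the fact that $(\mathbb{C}^\times)^m/(S_m\ltimes(\mathbb{Z}/2\mathbb{Z})^m)\to(\mathbb{C}^\times)^{2m}/S_{2m}$ is a closed immersion.
\end{itemize}
Finally, you do not need an $f_0$ with $I_\pi(f_0)$ invertible on a whole component. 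Your rank-one open-cell construction is not justified anyway: $\pi(f_0)$ is not uniformly of rank one, and the open $H$-orbit differs from the Bruhat cell used by the Jacquet integral. Once the image is a $\mathfrak{Z}_{\mathrm{gen}}(U(n))$-stable subspace of $\mathfrak{Z}_{\mathrm{gen}}(U(n))$, it is an ideal. It then suffices that at every point of $\Omega_{\mathrm{gen}}(U(n))$, non-tempered ones included, some $f$ gives a nonzero value. This follows from a family version of the Gelfand--Kazhdan theorem in the Kirillov model.
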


In particular, this result provides a spectral local transfer for the relative Kuznetsov trace formula and may have applications to the Lapid--Mao conjecture on Fourier--Whittaker coefficients \cite{LapidMao}. In the spirit of the Ichino--Ikeda conjecture for Bessel periods, Lapid and Mao predict an explicit formula expressing the square of a global Whittaker period of a cuspidal automorphic representation in terms of a special value of an $L$-function, multiplied by a product of local factors attached to each place --- these local factors being expressed in terms of local zeta integrals, or equivalently in terms of \emph{local relative characters}. 

The relative trace formula allows one to compare a spectral expansion, built from local relative characters such as the $I_\pi$ appearing in Theorem~\ref{mainT}, with a geometric expansion in terms of orbital integrals on the symmetric space. Our trace Paley--Wiener theorem supplies the kind of harmonic-analytic input such a comparison requires: it identifies the space of relative characters $I_\pi(f)$, for $\pi$ ranging over the tempered spectrum of $\mathrm{GL}_n(F)\backslash \mathrm{GL}_n(E)$ and $f \in C_c^\infty(G)$, with the generic Bernstein center of $U(n)$ via base change.

We conclude with an outline of the paper. General notation is fixed in Section~2, and Section~3 recalls the notions and results from the literature required for the rest of the paper. Section~\ref{cuspBC} studies the behavior of cuspidal supports under base change: we establish a closed immersion
\[
\Omega_{\mathrm{gen}}(U(n)) \hookrightarrow \Omega(\mathrm{GL}_n(E))
\]
arising from the base-change (Proposition~\ref{closedim}), and we show that the base change of an irreducible, generic, supercuspidal representation of $U(n)$ is induced by irreducible and supercuspidal representations in $\mathrm{GL}_n(E)$ (Proposition~\ref{propcuspBC}).

Section~\ref{FamRep} develops the algebraic families of representations needed to extend several classical tools, through three main ingredients: a family version of Jacquet functionals (Part~\ref{famJacquet}), a family version of the Gelfand--Kazhdan theorem \cite{GelfKaz} (Part~\ref{GK}), and a meromorphic continuation of the functional $\beta$ together with a multiplicity-one result for induced representations (Propositions~\ref{BetaFam} and \ref{BetaProportional}, in Part~\ref{PartBetaFam}).

Finally, Section~\ref{PW} assembles these ingredients: we define the relative characters $I_\pi$, establish their meromorphic continuation, and prove the trace Paley--Wiener theorem (Theorem~\ref{mainT}).


\section{Notation}

\begin{itemize}
    \item For any non Archimedean local field $E$, we denote $k_E$ its residue field with cardinal $q_E$.
    \item For any integer $k$, we will denote $G_k$ for $\mathrm{GL}_k$.
    \item Let $F$ be a non Archimedean local field. We say that a group $G$ is a \emph{$p$-adic group}, if $G$ is the group of $F$-points of a connected reductive group defined over $F$.
    \item For $E/F$ a quadratic extension of non Archimedean local fields, we denote $c$ the non trivial element of $\mathrm{Gal}(E/F)$. Let $N_{E/F}: E \to F$ be the norm application (here $N_{E/F}(g)=gg^c$). And, let $\eta_{E/F}:F^\times \to \mathbb{C}^\times$ be the quadratic character, which is trivial on $N_{E/F}(E^\times)$ and equals to $-1$ elsewhere. Let $\eta':E^\times \to \mathbb{C}^\times$ be an extension of $\eta_{E/F}$ to $E^\times$.
    \item Let $\eta'_n$ be a character of $G_n(E)$, defined by
    
    $$\eta'_n(g) = \left\{\begin{array}{cc}
         \eta'(\det g), & \ n \ \mathrm{even},  \\
         1, & \ n \ \mathrm{odd}.
    \end{array} \right.$$
    \item We fix a non trivial additive character $\psi$ of $E$ with $\psi$ trivial on $F$. 
    \item  All groups $G$ are supposed to be defined over $F$ and if we note $G(E)$, we mean $G(E) = (\mathrm{Res}_{E/F}G_{E})(F)$. 
    \item Let $Q=MU$ be a parabolic subgroup of $G_n$, with $M$ a Levi subgroup and $U$ the unipotent radical of $Q$. Then there exists $n_1, \dots, n_k$ a partition of $n$, such that $M$ is of the form 
    $$M = G_{n_1}\times \cdots \times G_{n_k}.$$
    \item Let $P_n$ denote the mirabolic subgroup of $G_{n}$, that is the subgroup of matrices of the form
    $$\begin{pmatrix} * & \cdots & * & * \\
                  \vdots & \ddots & \vdots & \vdots \\
                  * & \cdots & * & * \\
                  0 & \cdots & 0 & 1 \end{pmatrix}$$
    \item Let $B_n = A_nN_n$ be the Borel subgroup of upper triangular matrices of $G_n$, with $A_n$ the torus of diagonal matrices and $N_n$ the unipotent radical. Let $\Delta_0$ be the set of simple roots of $A_n$ in $B_n$.
    \item Let $\pi$ be a representation of any group $G$. For $g\in G$, we denote $\pi^g$ the representation defined by $\pi^g(h)=\pi(ghg^{-1})$.
    \item For all non Archimedean local field $F$, let $W_F$ be the Weil group of $F$. We denote $W_F'=W_F\times \mathrm{SL}_2(\mathbb{C})$ the associated Weil-Deligne group.
    \item For $E/F$ a quadratic extension of non Archimedean local fields, and any representation $\pi$ of $G_n(E)$, we denote by $\pi^\vee$ the \emph{contragredient} (or smooth dual) representation of $\pi$, by $\pi^c$ the composition with the automorphism of $G_n(E)$ induced by the conjugation $c$, and we define $\pi^*=(\pi^\vee)^c$.
    \item Let $G$ be a $p$-adic reductive group, for any Levi subgroup $M$ of $G$ and $\sigma$ an irreducible and supercuspidal representation of $M$, denote by $[M, \sigma]$ the $G$-conjugacy class of the pair $(M, \sigma)$. Let $\Omega(G)$ be the set of such \emph{cuspidal pairs} $[M, \sigma]$. 
    \item For a $p$-adic reductive group $G$, denote by $\mathrm{Rep}(G)$ the category of smooth representations of $G$ with complex coefficients.
\end{itemize}

\section{Preliminaries}
\subsection{Groups of characters} 
Let $F$ be a non Archimedean local field, let $v_F$ be its valuation and $\vert \cdot\vert_F$ the corresponding absolute value. Let $G=\mathbf{G}(F)$ be the group of $F$-points of an algebraic connected reductive group $\mathbf{G}$, defined over $F$. Let $Z_G$ be the center of G and $A_G$ the maximal split torus in $Z_G$. Let $X^*(G)$ be the group of rational characters of $G$ defined over $F$, and set 

$$\mathfrak{a}_G^* = X^*(A_G)\otimes_{\mathbb{Z}}\mathbb{R}= X^*(G)\otimes_{\mathbb{Z}}\mathbb{R}, \ \mathfrak{a}_{G, \mathbb{C}}^*=\mathfrak{a}_G^*\otimes_{\mathbb{R}} \mathbb{C},$$
and 

$$\mathfrak{a}_G = \mathrm{Hom} (X^*(G), \mathbb{R}), \ \mathfrak{a}_{G, \mathbb{C}}= \mathfrak{a}_G \otimes_{\mathbb{R}}\mathbb{C}.$$
There is natural pairing between $\mathfrak{a}_G^*$ and $\mathfrak{a}_G$ denoted $\langle \cdot, \cdot \rangle$, from which we define a morphism $H_G:G\to \mathfrak{a}_G$ by $\langle \lambda, H_G(g)\rangle = v_F(\lambda(g))$ for $\lambda \in X^*(G)$ and $g\in G$. The kernel of $H_G$ is denoted by 
$$^0G:= \bigcap_{\lambda \in X^*(G)} \ker \vert \lambda \vert_F$$
and it contains all the compact subgroups of $G$. The group of characters from $G$ to $\mathbb{C^\times}$ that are trivial on $^0G$, denoted by $\mathcal{X}(G)$, is called the group of \emph{unramified characters} of $G$ and it has in fact a structure of a complex torus. To any $\lambda\in \mathfrak{a}_{G, \mathbb{C}}^*$ we associate a smooth unramified character
$$\begin{array}{cccc}
     & G/{}^0G & \longrightarrow & \mathbb{C}^\times \\
     & g & \mapsto & q_F^{-\langle \lambda, H_G(g)\rangle}
\end{array}.$$
We will usually say "let $Q=MU$ be a parabolic subgroup of $G$", meaning that we fix a Levi decomposition of the parabolic subgroup $Q$, with $M$ a Levi factor and $U$ the unipotent radical of $Q$. For every Levi subgroup $M$, we denote $W_G(M)=\mathrm{Norm}_G(M)/M$ its associated Weyl group. If $Q=MU$ is a parabolic subgroup of $G$, let $\delta_Q$ be its modular character, and let $H_Q:Q\to \mathfrak{a}_M$ be the composition of $H_M$ with the projection from $Q$ to $M$. If we fix a maximal and special compact subgroup $K_0$ of $G$, then we have the Iwasawa decomposition $G=QK_0$ and we can extend $H_Q$ to G by setting $H_Q(muk)=H_M(m)$, for all $(m,u,k)\in M\times U \times K_0$. 

\subsection{Subgroups of $\mathrm{GL}_n$}

For every integer $n\geq 1$, we denote $G_n$ for the algebraic group $\mathrm{GL}_n$ defined over $F$, and $Z_n$ its center. Let $B_n = A_nN_n$ be the Borel subgroup of upper triangular matrices of $G_n$, with $A_n$ the maximal torus of diagonal matrices and $N_n$ its unipotent radical. For $g\in G_n$, we denote ${}^tg$ its transpose and $g_{i,j}$, $1 \leq i,j\leq n$ its entries. We always consider $G_n$ as a subgroup of $G_{n+1}$, through the embedding $g\in G_n \mapsto \begin{pmatrix}
    g & \\ & 1
\end{pmatrix} \in G_{n+1}$.  Let $P_n$ denote the mirabolic subgroup of $G_{n}$, that is the subgroup of matrices of the form
    $$\begin{pmatrix} * & \cdots & * & * \\
                  \vdots & \ddots & \vdots & \vdots \\
                  * & \cdots & * & * \\
                  0 & \cdots & 0 & 1 \end{pmatrix}$$ 
and $U_n$ be the unipotent radical of $P_n$, namely the matrices of the form 
    $$\begin{pmatrix} 1 & \cdots & 0 & * \\
                  \vdots & \ddots & \vdots & \vdots \\
                  0 & \cdots & 1 & * \\
                  0 & \cdots & 0 & 1 \end{pmatrix}$$
such that $P_n =G_{n-1}U_n$. Let $Q=MU$ be a parabolic subgroup of $G_n$, with $M$ a Levi subgroup and $U=\mathrm{Rad}_{U}(Q)$. Then there exists $n_1, \dots, n_k$ a partition of $n$, such that $M$ is of the form 
$$M = G_{n_1}\times \cdots \times G_{n_k}.$$
The unramified characters of $G_n(F)$ are of the form $\vert \det \vert_F^\lambda$, with $\lambda\in \mathbb{C}$, hence for the Levi subgroup $M$ they are of the form
$$\vert \det\vert_F^{\lambda_1} \boxtimes \cdots \boxtimes \vert \det\vert_F^{\lambda_k}.$$
By fixing a uniformizer $\varpi_F$ of $F$, we get an isomorphims
$$\begin{array}{cccc}

     & \mathcal{X}(G_{n_i}) & \longrightarrow & \mathbb{C}^\times \\
     &  \vert \det \vert ^\lambda& \longmapsto & \vert \varpi_F\vert^\lambda = q_F^{-\lambda}
\end{array}$$
that does not depend on the choice of $\varpi_F$, so we may identify $\mathcal{X}(M)$ with $(\mathbb{C}^{\times})^k$.

For a standard parabolic subgroup $Q=MU$, we denote by $\Delta_0\backslash \Delta_0^M$ the set of simple roots of $A_n$ corresponding to the Levi $M$, and by $\Delta_0^Q$ the set of $\alpha\vert_{A_M}$, with $\alpha \in \Delta_0\backslash \Delta_0^M$ and $A_M$ being the central split component of $M$.

\subsection{Unitary groups}
Let $E/F$ be a quadratic extension of non Archimedean local fields. We call $V$ an \emph{Hermitian space} if it is a finite dimensional $E$-vector space equipped with a non-degenerate Hermitian form $h$ linear on the first variable. Then the associated unitary group, seen as an algebraic group defined of $F$ is defined by

$$U(V)=\{g \in \mathrm{GL}(V), \ h(gx,gy)=h(x,y) \ \forall x, y \in V\}.$$
If $e$ is a basis of $V$, we define the \emph{discriminant} of $h$, $\mathrm{disc}(h)=\det (\mathrm{Mat}_e(h))$ and we have 

$$\mathrm{disc}(h) \in F^\times/N_{E/F}(E^\times).$$
In our case, the discriminant and dimension determine entirely the Hermitian form $h$, up to isomorphism. By the class field theory we have $F^\times/N_{E/F}(E^\times)\cong \mathbb{Z}/2\mathbb{Z}$, which means that we have two different choices for $h$ for a fixed dimension. For all $n \geq 1$, we denote $U(n)$ the quasi-split unitary group of rank $n$ associated to the Hermitian form on $E^n$ associated to the following matrix

$$w_n = \begin{pmatrix} & & 1 \\
                        & \revdots & \\
                        1 & & \end{pmatrix}.$$
More precisely, for every $F$-algebra $R$

$$U(n)(R)=\{g\in G_n(R\otimes_F E), ^tg^cw_ng=w_n\}.$$
By abuse of notations, we will denote $U(n)$ instead of $U(n)(F)$ when there is no ambiguity. 

Every Levi subgroup $M$ of a unitary group $U(V)$ is isomorphic to 

$$\mathrm{Res}_{E/F}G_{n_1, E} \times \cdots \times \mathrm{Res}_{E/F}G_{n_k, E} \times U(W),$$
where $W$ is a non-degenerate subspace of $V$, such that $\dim(V)=\dim(W) +2n_1 + \cdots +2n_k$.

\subsection{Representations}

Let $G=\mathbf{G}(F)$ be the group of $F$-points of an algebraic connected reductive group $\mathbf{G}$, defined over $F$. We only consider smooth representations on complex vector spaces (here \emph{smooth} means that every vector has an open stabilizer). For every (smooth) representation $(\pi, V)$ (with $V$ a complex vector space), we will usually use either the notation $\pi$ or $V$ instead of $(\pi, V)$.

Let $\pi$ be a representation of $G$. We denote by $\pi^\vee$ the contragredient representation, and by $\langle\cdot, \cdot\rangle$ the natural pairing between $\pi$ and $\pi^\vee$. If $\chi: G \to \mathbb{C}^\times$ is a smooth character, we denote $\pi \otimes \chi$ the twist of $\pi$ by $\chi$. If $\lambda\in \mathfrak{a}_{G, \mathbb{C}}^*$, let $\pi_\lambda$ denote the twist of $\pi$ by the character $g \mapsto q_F^{-\langle \lambda, H_G(g)\rangle}$. When $G=G_n(F)$ we will denote $\pi_x = \pi \otimes \vert \det\vert^x$, for $x\in \mathbb{C}$.

If $G_1, G_2$ are two reductive groups over $F$ and $\pi_1, \pi_2$ are representations of $G_1(F)$ and $G_2(F)$ respectively, we denote by $\pi_1 \boxtimes \pi_2$ the tensor product representation of $G_1(F) \times G_2(F)$. 

For $\pi$ a representation of finite length of $G$, let $\mathrm{End}^\infty(\pi)$ be the space of endomorphisms $T: \pi \to \pi$ that are biinvariant by a compact-open subgroup. The natural map $\pi \otimes \pi^\vee \to \mathrm{End}^\infty(\pi)$ is an isomorphism of $G\times G$-representations. Hence, the canonical pairing $\pi \otimes \pi^\vee \to \mathbb{C}$ gives rise to a map from $\mathrm{End}^\infty(\pi)$ to $\mathbb{C}$, given by the trace $T \mapsto \mathrm{Trace}(T)$. For every $f \in C_c^\infty(G)$, we define an operator $\pi(f)\in \mathrm{End}^\infty(\pi)$ by fixing a Haar measure $dg$ on $G$
$$\pi(f)v=\int_G f(g)\pi(g)vdg.$$

Let $Q=MU$ be a parabolic subgroup of $G$ an $A_M$ be the central maximal split torus of $M$. For $(\pi, V)$ a representation of $G$, we denote $r_Q^G(\pi, V)$ (or $r_Q^G(\pi)$ or $r_Q^G(V)$) the normalized parabolic restriction. If $\chi$ is any smooth character of $A_M$ and $l\in \mathbb{N}$, set 
$$(r_Q^GV)_{\chi, l} = \{v \in (r_Q^GV): (r_Q^G(\pi)(a_1)-\chi(a_1))\dots(r_Q^G(\pi)(a_l)-\chi(a_l))v=0, \ \forall a_1, \dots, a_l\in A_M \}$$
and 
$$(r_Q^GV)_{\chi} =\bigcup_{l=0}^\infty (r_Q^GV)_{\chi, l}$$
for the generalized $\chi$-eigenspace. If $(r_Q^GV)_{\chi}$ is non-zero, then $\chi$ is called a \emph{normalized exponent} of $\pi$ associated to $Q$, and they form a set denoted $\mathrm{Exp}_Q(\pi)$. If we compose such an exponent $\chi$ with the complex module we get a character $\vert \chi \vert : A_M \to \mathbb{R}_{>0}$ from which we define $\Re(\chi)=\ln \vert \chi \vert \in \mathfrak{a}_M^*$.
For $\sigma$ a representation of $M$ we use the notation $i_Q^G(\sigma)$ for the normalized parabolic induction, it is the space of smooth functions $e:G\to \sigma$ such that for all $(m,u,g)\in M\times U \times G$, $e(mug)=\delta_Q^{1/2}(m)\sigma(m)e(g)$. When $G=G_n(F)$ then $M$ is of the form $M = G_{n_1}(F)\times \cdots \times G_{n_k}(F)$ and in that case we write 

$$\sigma_1 \times \cdots \times \sigma_k = i_Q^G(\sigma_1 \boxtimes \cdots \boxtimes \sigma_k),$$
with $\sigma_i$ a representation of $G_{n_i}(F)$, for all $1 \leq i \leq k$. We will use the same notation for the unitary group case $U(V)$, where $V$ is an Hermitian space with respect to the quadratic extention $E/F$. For a Levi subgroup 
$$M=G_{n_1}(E) \times \cdots \times G_{n_k}(E) \times U(W)(F),$$
where $W$ is a non-degenerate subspace of $V$, we denote 
$$\sigma_1\times \cdots \times \sigma_k \rtimes \sigma_W = i_Q^G(\sigma_1 \boxtimes \cdots \boxtimes \sigma_k\boxtimes \sigma_W),$$
with $\sigma_i$ a representation of $G_{n_i}(E)$, for all $1 \leq i \leq k$ and $\sigma_W$ a representation of $U(W)(F)$.

Let $\mathrm{Irr}(G)$ be the set of all isomorphism classes of irreducible representations of $G$. For all $\pi\in \mathrm{Irr}(G)$, we denote $\omega_\pi:Z_G \to \mathbb{C}^\times$ its central character. Let $\mathrm{Temp}(G) \subset \Irr{G}$ be the subset of irreducible tempered representations, and let $\Pi_2(G) \subset \mathrm{Temp}(G)$ be the subset of irreducible square-integrable representations (also called \emph{discrete series}). At some point we will need the following characterization of tempered representations. A representation $\pi$ is tempered if and only if for all parabolic subgroups $Q=MU$, and all normalized exponents $\chi\in \mathrm{Exp}_Q(\pi)$ we can write 
$$\Re(\chi) = \sum_{\alpha\in \Delta_Q} c_\alpha \alpha, \ c_\alpha \geq 0$$
where $\Delta_Q$ is the set of simple roots of $A_M$ in $Q$. 

\begin{lem}\label{expUnit}
      Let $G= G_d(F)$ and $\pi$ be a representation of $G$, parabolically induced from a unitary supercuspidal representation. Then, for every parabolic subgroup $P$ of $G$, the exponents of $r_P^{G}(\pi)$ are all unitary.
\end{lem}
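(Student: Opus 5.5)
We use the Bernstein--Zelevinsky geometric lemma. Write $\pi = i_Q^G(\sigma)$, where $Q=LV$ is a standard parabolic subgroup with Levi $L = G_{m_1}(F)\times\cdots\times G_{m_r}(F)$ and $\sigma = \sigma_1\boxtimes\cdots\boxtimes\sigma_r$ is supercuspidal with each $\sigma_i$ unitary. Fix a standard parabolic $P=MU$ of $G$. It suffices to treat standard $P$, since every parabolic is conjugate to a standard one and conjugation transports exponents. The geometric lemma says that $r_P^G i_Q^G(\sigma)$ has a finite filtration. Its subquotients are
\[
i^{M}_{M\cap wQw^{-1}} \circ w \circ r^{L}_{L\cap w^{-1}Pw}(\sigma),
\]
where $w$ runs over the set $W^{M,L}$ of minimal length representatives of $W_M\backslash W/W_L$.

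First, exponents behave well in exact sequences. For an exact sequence $0\to V_1\to V_2\to V_3\to 0$ of finite length representations of $M$, the generalized eigenspaces for $A_M$ give an exact sequence. This holds because $A_M$ acts through a finitely generated abelian group and each $V_i$ is admissible of finite length, so it splits into finitely many generalized eigenspaces. Hence $\mathrm{Exp}(V_2)=\mathrm{Exp}(V_1)\cup\mathrm{Exp}(V_3)$, and it is enough to show that the exponents of each subquotient above are unitary.

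Second, we compute these subquotients. Since $\sigma$ is supercuspidal, $r^{L}_{L\cap w^{-1}Pw}(\sigma)$ vanishes unless $L\cap w^{-1}Pw = L$, that is, unless $wLw^{-1}\subset M$. In that case the subquotient is $i^M_{M\cap wQw^{-1}}(w\sigma)$ and $M\cap wQw^{-1}$ is a parabolic subgroup of $M$ with Levi $wLw^{-1}$. The split centre $A_M$ is contained in the centre of $M$, so it lies in $A_{wLw^{-1}}$. It therefore acts on $i^M(w\sigma)$ through the central character of $w\sigma$, and the modulus factor $\delta^{1/2}$ is trivial on $A_M$: the modulus character of a parabolic of $M$ is trivial on the centre of $M$. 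So $A_M$ acts by the scalar character $\omega_{w\sigma}|_{A_M}$, and the unique exponent of this subquotient is that character. Since each $\sigma_i$ is unitary, $\omega_\sigma$ is unitary, hence so are $\omega_{w\sigma}$ and its restriction.

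The only step needing care is the first one, the compatibility of exponents with the filtration. If $A_M$ acts non-semisimply the eigenspaces are only generalized eigenspaces, so one should check exactness of $V\mapsto V_\chi$. This follows from the decomposition $V=\bigoplus_\chi V_\chi$, which holds for finitely generated admissible representations by Jacquet's theory since $Z_M$ acts locally finitely.
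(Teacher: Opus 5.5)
Your proof is correct and follows essentially the same route as the paper. Both use the geometric lemma, use supercuspidality of $\sigma$ to discard every double coset except those with $wLw^{-1}\subset M$, and note that $A_M$ then acts on each surviving subquotient $i^M_{M\cap wQw^{-1}}(w\sigma)$ by the unitary character $\omega_{w\sigma}|_{A_M}$. Your extra check that exponents of a filtered representation are controlled by those of its graded pieces is a step the paper leaves implicit; only the inclusion $\mathrm{Exp}(V_2)\subseteq\mathrm{Exp}(V_1)\cup\mathrm{Exp}(V_3)$ is needed, and it is immediate because $(V_1)_\chi=(V_2)_\chi\cap V_1$ and the image of $(V_2)_\chi$ in $V_3$ lies in $(V_3)_\chi$.
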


\begin{proof}
    Every parabolic is supposed to be standard. Let $Q=MU_Q$ be a parabolic subgroup of $G_d(F)$ and $\sigma$ be a unitary supercuspidal representation of $M$, denote by $\omega_\sigma$ its central (unitary) character. We set $\pi=i_Q^G(\sigma)$. Recall that, due to the geometric lemma, for all parabolic subgroup $P=LU_P$ the representation $r_P^G(\pi)$ admits a filtration, with associated graded vector space, on which $L$ acts, is isomorphic to  
    $$\bigoplus_w i_{L\cap w^{-1}Q}^{L} (r^{M}_{M\cap wP} \sigma)^w$$
    where $w$ runs over representatives of the double cosets $Q\backslash G/P$. Since $\sigma$ is supercuspidal, the only $w$ that contribute to this direct sum are the ones that verify $M\subseteq wP $. In particular, we get that for all $w$, $Z_L\subseteq Z_{w^{-1}M} \cong Z_{M}$. Hence
    $$\bigoplus_w i_{L\cap w^{-1}Q}^{L} ( \sigma^w).$$
    the exponents of $r_P^G(\pi)$ are the central characters of the various $ i_{L\cap w^{-1}Q}^{L} ( \sigma^w)$, which are the restrictions to $Z_L$ of the central characters of $\sigma^w$, that are unitary by assumption.
\end{proof}

Let $H$ be a subgroup of $G$, a representation $\pi$ of $G$ is called $H$-\emph{distinguished} if there exists a non-zero $H$-invariant linear from $\pi$ to $\mathbb{C}$. We denote by $\mathrm{Temp}_H(G)$ the subset of tempered $H$-distinguished representations of $G$. 

Suppose $\mathbf{G}$ is quasi-split. Let $B=AN$ be a Borel subgroup of $G$, and $\psi_N:N\to \mathbb{C}^\times$ be a generic character of $N$ (i.e. its stabilizer in $A$ is the center of $G$). A representation $\pi$ is $(N, \psi_N)$-\emph{generic} (or just \emph{generic} when there is no ambiguity), if there exists a non-trivial $N$-equivariant map form $\pi$ to $\psi_N$. We denote $\mathrm{Temp_{gen}}(G)$ the set of tempered representations that are $(N, \psi_N)$-generic.

\subsection{Whittaker models}\label{WhittSection}
Let $F$ be a local non Archimedean field. In this section we write $B_n = A_n N_n$ instead of $B_n(F)= A_n(F)N_n(F)$, the Borel subgroup of upper triangular matrices of $G_n=G_n(F)$. Let $\psi: F \to \mathbb{C}^\times$ be a smooth character, and let $\psi_n$ be the generic character from $N_n$ to $\mathbb{C}^\times$, defined by 
$$\psi_n \begin{pmatrix}
        1 & x_1 &  * & \dots  & * \\
          & \ddots & \ddots& \ddots  & \vdots \\
          & & \ddots & \ddots & * \\
          &  &  &\ddots &   x_{n-1} \\
          & & & & 1
    \end{pmatrix} = \psi(\sum_{i=1}^{n-1}x_i).$$
Recall that a \emph{Whittaker function} is a function $f\in C^\infty(N_n\backslash G_n, \psi_n)$, which means that $f$ is a smooth function such that for all $u\in N_n$, and all $g\in G_n$
$$f(ug)=\psi_n(u)f(g).$$
We consider $G_n$ acting on the right on $C^\infty(N_n\backslash G_n, \psi_n)$. Recall that a representation $\pi$ of $G_n$ is \emph{generic} if it admits non trivial Whittaker functional in $\mathrm{Hom}_{N_n}(\pi, \psi_n)$. If $\pi$ is generic and irreducible, then $\pi$ is isomorphic to its \emph{Whittaker model} $\mathcal{W}(\pi, \psi_n)\subset C^\infty(N_n\backslash G_n, \psi_n)$. We recall the explicit formula given in \cite{LMAsymp}, Theorem 3.1. 

\begin{theo}\label{asympW}
    Let $\pi$ be a generic representation of finite length of $G_n$. Then there exists $l\in \mathbb{N}$ such that any $W \in \mathcal{W}(\pi, \psi_n)$ can be written as 
    $$W(uak) = \psi_n(u) \sum_{B_n\subseteq Q}\delta^{1/2}_Q(a)\sum_{\mu \in \mathrm{Exp}_Q(\pi)} \phi_{Q, \mu}(a, k), \ a \in A_n, \ u \in N_n, k\in K_0$$
    where for all $k\in K_0$, $\phi_{Q, \mu}(\cdot, k)$ is a finite sum of functions of the form $\xi.\varphi$ where $\xi \in \mathcal{F}(A_n)_{\chi,l}$, with
    $$\mathcal{F}(A_n)_{\chi,l}=\{f: A \to \mathbb{C},  \ (R(a_1)-\chi(a_1))\cdots (R(a_l)-\chi(a_l))f=0, \ \forall a_1, \dots,a_l\in A\},$$
    and $\varphi$ is a smooth function with compact support on $F^{\Delta_0\backslash \Delta_0^Q}\times (F^\times)^{\Delta_0^Q}$, and $\phi_{Q, \mu}$ is invariant under a compact open subgroup of $K_0$. Note that for the function $\varphi$ we have identified $A_n/Z_n$ with $(F^\times)^{\Delta_0}$ through the application $a\mapsto (\alpha(a))_{\alpha\in \Delta_0}$, and $\Delta_0^Q$ is the set of simple roots of $A_n$ in $M_Q$. 
\end{theo}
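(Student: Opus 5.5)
This is the asymptotic expansion of Whittaker functions of Lapid--Mao (\cite{LMAsymp}, Theorem 3.1), and the plan follows their strategy: induction on the semisimple rank along the standard parabolics, driven by Jacquet modules.

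\emph{Step 1: reduction to the torus.} By the Iwasawa decomposition $G_n=N_nA_nK_0$ and the equivariance $W(ug)=\psi_n(u)W(g)$, it suffices to describe $a\mapsto W(ak)$ for $k\in K_0$. Since $W$ is smooth, it is right-invariant under some compact open $K'\subset K_0$, so only finitely many $k\in K_0/K'$ matter. Replacing $W$ by $\pi(k)W$, we reduce to describing $a\mapsto W(a)$ on $A_n$. Modulo the center, which acts through the generalized eigenspaces of $Z_n$ since $\pi$ has finite length, we identify $A_n/Z_n$ with $(F^\times)^{\Delta_0}$ via the simple roots.

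\emph{Step 2: behaviour near the walls.} For $\alpha\in\Delta_0$ we have $W(a)=\psi_n(x)W(a)$ whenever $x\in N_n\cap K'$, after conjugating by $a$. Hence $W(a)=0$ unless $|\alpha(a)|$ is bounded for every $\alpha\in\Delta_0$. This gives the compactness in the $F$-directions: the support of $W\vert_{A_n}$ lies in $\{|\alpha(a)|\le C\}$.

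\emph{Step 3: region where some roots are small.} Fix a standard $Q=MU$ and consider the region where $|\alpha(a)|\to 0$ exactly for $\alpha\in\Delta_0\setminus\Delta_0^M$, with the other roots in a compact set. Here one uses the Jacquet--Casselman principle. The Whittaker functional restricted to $M$ and applied to $U$-averages of $W$ factors, up to a controlled error, through the Jacquet module $r_Q^G(\pi)$. Concretely, for $a$ deep in $A_M^{-}$, $W(a)=\delta_Q^{1/2}(a)\,\lambda_M\bigl(r_Q^G(\pi)(a)\,\bar v\bigr)$ for a suitable Whittaker functional $\lambda_M$ on the generic finite length representation $r_Q^G(\pi)$ of $M$. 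This is Casselman's canonical pairing argument; the proof uses the $\psi$-twisted version in the style of Lapid--Mao. Decomposing $r_Q^G(\pi)=\bigoplus_\chi (r_Q^G\pi)_{\chi}$ into generalized $A_M$-eigenspaces, with $l$ bounding the nilpotence order uniformly (finite length), the $A_M$-dependence gives functions in $\mathcal F(A_n)_{\chi,l}$ with $\chi\in\mathrm{Exp}_Q(\pi)$. The $M$-part is handled by induction on $M$ (a product of smaller $\mathrm{GL}$'s), which supplies the $(F^\times)^{\Delta_0^Q}$-dependence.

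\emph{Step 4: gluing.} Cover $(F^\times)^{\Delta_0}\cap\{|\alpha|\le C\}$ by the regions $\{|\alpha|<\epsilon \text{ iff }\alpha\notin\Delta_0^M\}$, using characteristic functions of these sets (smooth since they are open compact conditions). Then subtract contributions inductively, an inclusion--exclusion over $Q\supseteq B_n$. The summand for $Q$ is then a finite sum of $\xi\cdot\varphi$ with $\varphi$ smooth and compactly supported on $F^{\Delta_0\setminus\Delta_0^Q}\times(F^\times)^{\Delta_0^Q}$. Smoothness across $0$ in the $F$-coordinates comes from the exact eigen-expansion holding on a full neighbourhood of $0$. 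Invariance of $\phi_{Q,\mu}$ under an open subgroup of $K_0$ comes from $K'$.

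\emph{Main obstacle.} The hard part is Step 3: showing that the Jacquet-module description holds exactly, not just asymptotically, on a neighbourhood $|\alpha(a)|<\epsilon$ uniform in the remaining variables. This is what makes $\varphi$ extend smoothly to $0$. I would attack it with Jacquet's lemma on $U$-invariants under $K'$-translates (Casselman's stability for $a\in A_M^{-}$ sufficiently dominant), combined with the genericity of $\psi_n\vert_{N_M}$.
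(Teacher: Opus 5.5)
The paper gives no argument of its own for this statement: it quotes the expansion directly from \cite{LMAsymp}, Theorem~3.1, which is exactly the source you identify, so your route coincides with the paper's. Your outline is a sound reconstruction of the standard argument: an exact Jacquet-module (constant-term) identity along each standard parabolic, then gluing over the partition of $\{|\alpha(a)|\le C\}$ according to which simple roots are small. The uniform exactness you flag in Step~3 is precisely the content imported from the literature, not anything the paper proves.
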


The following definition is due to Lapid-Mao, \cite{LapidMao} 2.1.
\begin{definition}[Regularized integral]\label{RegInt}
    We say that $f\in C^\infty(N_n)$ admits a \emph{regularized integral}, if there exists a compact-open subgroup $K_1$ of $N_n$, such that for all compact-open subgroup $K_2$ of $N_n$ containing $K_1$, we have 
    $$\int_{K_1}f(u)du = \int_{K_2}f(u)du.$$
    In that case, we denote by 
    $$\int^*_{N_n} f(u)du = \int_{K_1}f(u)du$$
    the \emph{regularized integral} of $f$.
\end{definition}

\begin{definition}\label{Wfpi}
    Let $Q=MU$ be a parabolic subgroup of $G=G_n$ and $\sigma \in \Irr{M}$ be square-integrable, set $\pi=i_Q^G(\sigma)$. For all $f\in C_c^\infty(G)$ we define
    \begin{enumerate}
        \item a function $f_\pi \in C^\infty(G)$, such that for all $g \in G$
        $$f_\pi(g):= \mathrm{Trace}(\pi(g)\pi(f^\vee)).$$
        \item a function $W_f\in C^\infty(N_n\backslash G\times N_n\backslash G, \psi_n^{-1}\otimes \psi_n)$, such that for all $(g_1, g_2)\in G\times G$
        $$W_f(g_1, g_2)= \int_N f(g_1^{-1}ug_2)\psi_n(u)^{-1} du.$$
    \end{enumerate}
\end{definition}
These definitions and Proposition 2.3, and 2.11 of \cite{LapidMao} allow us to define 
$$W_{f, \pi}(g_1, g_2):= W_{f_\pi}(g_1, g_2)= \int_N^* f_\pi(g_1^{-1}ug_2)\psi_n(u)^{-1} du,$$
for all $(g_1, g_2)\in G\times G$. At some point we will also use the function
$$W_{f, \pi}^1 := W_{f^\vee, \pi}(1, \cdot) $$
and since $\pi$ is a tempered representation of $G_n$, it is a generic representation, and then $W_{f, \pi}^1\in \mathcal{W}(\pi, \psi_n)$.

\begin{prop}\label{Wpi}
    Let $Q=MU$ be a parabolic subgroup of $G_n$, $\sigma\in \Irr{M}$ be supercuspidal, set $\pi = i_Q^{G_n}(\sigma)$. Let $\pi_+$ denote the unique $\psi_n$-generic irreducible subquotient of $\pi$. Then 
    $$W_{f,\pi}^1=W_{f, \pi_+}^1 \in \mathcal{W}(\pi_+, \psi_n).$$
\end{prop}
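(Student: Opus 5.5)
The plan is to pass from $\pi$ to its composition factors and show that only the generic one contributes. Since $\pi=i_Q^{G_n}(\sigma)$ has finite length, fix a composition series $0=V_0\subset V_1\subset\cdots\subset V_r=\pi$ with irreducible quotients $\tau_i=V_i/V_{i-1}$. By Bernstein--Zelevinsky, exactly one $\tau_i$ is $\psi_n$-generic, namely $\pi_+$, because $\dim\mathrm{Hom}_{N_n}(\pi,\psi_n)=1$ and the twisted Jacquet functor $V\mapsto V_{N_n,\psi_n}$ is exact.

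\textbf{Step 1 (additivity of $f_\pi$).} For $f\in C_c^\infty(G)$ the operator $\pi(g)\pi(f^\vee)$ is of finite rank and preserves the filtration, so its trace is the sum of the traces on the graded pieces:
$$f_\pi(g)=\sum_{i=1}^r \mathrm{Trace}\bigl(\tau_i(g)\tau_i(f^\vee)\bigr)=\sum_i f_{\tau_i}(g).$$
This is a routine linear-algebra fact for smooth finite-length representations.

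\textbf{Step 2 (additivity of the regularized integral).} For each $i$, $W_{f,\tau_i}(g_1,g_2)=\int^*_{N}f_{\tau_i}(g_1^{-1}ug_2)\psi_n(u)^{-1}du$ should exist, by the results of Lapid--Mao (Prop.~2.3, 2.11) valid for finite-length representations. Regularized integrals are linear whenever each summand admits one, since we can take a common $K_1$. Hence
$$W^1_{f,\pi}=\sum_i W^1_{f,\tau_i}.$$

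\textbf{Step 3 (vanishing on non-generic factors).} This is the main point. For irreducible $\tau$, write $f_\tau(g)=\sum_j\langle\tau(g)\tau(f^\vee)v_j,v_j^\vee\rangle$ with finitely many $v_j$. Then $W_{f,\tau}(1,g)$ is the regularized integral over $N$ of matrix coefficients twisted by $\psi_n^{-1}$. Lapid--Mao show that $v\otimes v^\vee\mapsto\int^*_N\langle\tau(u)v,v^\vee\rangle\psi_n^{-1}(u)\,du$ factors as a product of a $\psi_n$-Whittaker functional on $\tau$ and a $\psi_n^{-1}$-Whittaker functional on $\tau^\vee$. Indeed, it is $(N\times N,\psi_n\otimes\psi_n^{-1})$-equivariant, so it lies in $\mathrm{Hom}_{N}(\tau,\psi_n)\otimes\mathrm{Hom}_N(\tau^\vee,\psi_n^{-1})$, using finite-dimensionality. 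If $\tau$ is not generic, this space is $0$ and $W_{f,\tau}=0$.

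The obstacle is that Lapid--Mao's existence of the regularized integral may require hypotheses such as tempered or unitary exponents. Here Lemma~\ref{expUnit} helps, provided $\sigma$ is unitary, which we may reduce to by a twist. The subquotients $\tau_i$ then have exponents among those of $\pi$, all unitary, so Theorem~\ref{asympW} gives the asymptotics needed for stabilization of $\int_{K}$ as $K$ grows. I would verify the existence directly by this route. For general non-unitary $\sigma$, I would either twist by an unramified character of $G_n$, which commutes with everything up to a harmless factor, or invoke Lapid--Mao's general finite-length statement.

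\textbf{Conclusion.} Combining the steps gives $W^1_{f,\pi}=W^1_{f,\pi_+}$. This function lies in $\mathcal{W}(\pi_+,\psi_n)$, since $g\mapsto W_{f^\vee,\pi_+}(1,g)$ equals $\lambda(\pi_+(g)v)$ summed over vectors, with $\lambda$ the Whittaker functional of $\pi_+$.
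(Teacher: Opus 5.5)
Your proposal is correct and is essentially the paper's proof: additivity of $\mathrm{Trace}(\pi(g)\pi(f^\vee))$ along a composition series, linearity of the regularized integral, and Lapid--Mao's Proposition~2.3 applied to each irreducible factor, with your $(N_n\times N_n,\psi_n\otimes\psi_n^{-1})$-equivariance argument making explicit why the non-generic factors contribute nothing. For the existence of the regularized integrals, cite Lapid--Mao as the paper does, since your alternative route fails: Theorem~\ref{asympW} only concerns generic representations and so says nothing about the non-generic factors, and a twist by a character of $G_n$ cannot make a general $\sigma$ unitary---moreover, for unitary $\sigma$ the representation $\pi$ is already irreducible, so that case carries no content.
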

\begin{proof}
    It follows from Proposition 2.3 in \cite{LapidMao} and the fact that if $(\pi, V)$ is an admissible representation of $G_n$, admitting a filtration by subrepresentations
    $$0=V_0 \subseteq V_1 \subseteq \cdots \subseteq V_m = V,$$
    we denote $(\sigma_i, V_{i+1}/V_i)$ for all $i\in \{1,\dots, m-1 \}$, then for all $f\in C_c^\infty(G_n(E))$, 
    $$\mathrm{Trace}(\pi(f))=\sum_{i=0}^{m-1}\mathrm{Trace}(\sigma_i(f)).$$
\end{proof}

\subsection{Bernstein center}

Let $G$ be a $p$-adic group. It is known that for every irreducible representation $\pi$ of $G$ there exists $Q=MU$ and $\sigma\in \Irr{M}$ supercuspidal, such that $\pi$ is a subquotient of $i_Q^G\sigma$ (\cite{Ren}, VI.2.1). Moreover, the pair $(M,\sigma)$ is unique up to $G$-conjugacy. Let $\Omega(G)$ be the set of conjugacy classes of pairs $(M, \sigma)$ where $M$ is a Levi subgroup and $\sigma$ an irreducible supercuspidal representation of $M$. We denote by $[M, \sigma]_G$, or $[M, \sigma]$, the conjugacy class of the pair $(M, \sigma)$. This allows us to define a finite-to-one map
$$\begin{array}{cccc}
    \mathbf{Cs}: & \Irr{G} & \longrightarrow & \Omega (G)  \\
     & \pi & \longmapsto & [M, \sigma]_G
\end{array}$$
where "$\mathbf{Cs}$" stands for \emph{cuspidal support}. For all $[M, \sigma] \in \Omega(G)$, the image of the map 
$$\begin{array}{cccc}
     & \mathcal{X}(M) & \longrightarrow & \Omega (G)  \\
     & \chi & \longmapsto & [M, \sigma \otimes \chi]
\end{array}$$
will be denoted $\Omega_\sigma$ and called a \emph{connected component} of $\Omega(G)$. Actually, the theorem VI.7.1 in \cite{Ren} gives a description of these connected components. For all irreducible supercuspidal representation $\sigma$ of a Levi subgroup $M$ of $G$, we denote by $W(M,\sigma)$ the stabilizer of $\sigma$ in $W_G(M)\times \mathcal{X}(M)$. Then the above map induces a bijection 
$$\Omega_\sigma \cong \mathcal{X}(M)/W(M,\sigma).$$
Therefore, the connected components of $\Omega(G)$, and hence $\Omega(G)$ itself, inherit the structure of complex algebraic variety from the tori $\mathcal{X}(M)$.

The \emph{Bernstein center} is the algebra of polynomial (or regular) functions on $\Omega(G)$ denoted $\Hat{\mathfrak{Z}}(G)$, and $\mathfrak{Z}(G)\subset \Hat{\mathfrak{Z}}(G)$ be the subalgebra of functions with support in a finite number of connected components. The name "Bernstein center" refers to the fact that $\Hat{\mathfrak{Z}}(G)$ is the center of the category of smooth representations of $G$ (see \cite{Ren}, VI.10, or \cite{BDKV} 2.13).

\begin{definition}
    A function $I : \Irr{G}\to \mathbb{C}$ is \emph{regular} if there exists a regular function $I'$ on $\Omega(G)$, such that $I= I'\circ \mathbf{Cs}$.
    
\end{definition}


If $G$ is quasi-split, we can fix a Whittaker datum $(N, \psi)$ and consider $\Omega_{\mathrm{gen}}(G)$ the set of cuspidal pairs $[M, \sigma]$, with $M$ standard, such that
$$\mathrm{Hom}_N(i^G_Q(\sigma), \psi_N) \not = 0$$
or, equivalently, 
$$\mathrm{Hom}_{N\cap M}(\sigma, \psi_{N\cap M}) \not = 0$$
that is, for $\sigma$ $(N\cap M, \psi_{N\cap M})$-generic.

\subsection{Local Langlands correspondence}
Let $W_F$ be the Weil group of $F$, and $W'_F$ the Weil-Deligne group of $F$
$$W'_F = W_F \times \mathrm{SL}_2(\mathbb{C)}.$$
For $\mathbf{G}$ a connected reductive group over $F$, we associate its Langlands dual $\mathbf{G}^\vee$ a complex connected reductive group. We consider the Weil form of the L-group 
$$^L\mathbf{G}=\mathbf{G}^\vee \rtimes W_F.$$
In our context we have 
$$^LG_n(E)=(\GL n(\mathbb{C})\times \GL n(\mathbb{C}))\rtimes W_F$$
where the action of $W_F$ on $\GL n(\mathbb{C})\times \GL n(\mathbb{C})$ is given by 

$$\rho \cdot (g_1, g_2) = \left\{\begin{array}{cc}
     (g_2, g_1), & \mathrm{if} \ \rho \in W_F\backslash W_E, \\
     (g_1,g_2), & \mathrm{otherwise}
\end{array} \right.$$
If $V$ is a Hermitian space of dimension $n$ over $E$, then $^LU(V)=\GL n(\mathbb{C})\rtimes W_F$, where the action of $W_F$ on $\GL n(\mathbb{C})$ is given by 

$$\rho \cdot g = \left\{ \begin{array}{cc}
     J_n ^tg^{-1} J_n^{-1}, & \mathrm{if} \ \rho \in W_F\backslash W_E, \\
     g & \mathrm{otherwise}
\end{array} \right.$$
with $J_n = \begin{pmatrix} & & 1 \\
                        & \revdots & \\
                        (-1)^{n-1} & & \end{pmatrix}$.
\begin{definition}
    A \emph{Langlands parameter} of $\mathbf{G}$ (or of $\mathbf{G}(F)$) is the $\mathbf{G}^\vee$-conjugacy class of a continuous homomorphism $\varphi : W_F'\to \ ^L\mathbf{G}$ such that 
    \begin{enumerate}
        \item the restriction of $\varphi$ to $\mathrm{SL}_2(\mathbb{C})$ is algebraic, 
        \item $\varphi$ sends elements of $W_F$ to semi-simple elements,
        \item $\varphi$ commutes with the natural projections $W_F'\to W_F$ and $^L\mathbf{G}\to W_F$.
    \end{enumerate}
    Let $\Phi(\mathbf{G})$ be the set of Langlands parameters of $\mathbf{G}$.

    A Langlands parameter $\varphi$ is \emph{tempered}, if the projection of $\varphi(W_F)$ to $\mathbf{G}^\vee$ is bounded, and we denote by $\Phi_{\mathrm{temp}}(\mathbf{G})$ the set of such parameters. We say that $\varphi$ is \emph{discrete} if it is tempered and the centralizer of $\varphi(W'_F)$ in $\mathbf{G}^\vee$ is finite modulo $Z(\mathbf{G}^\vee)^{W_F}$ and the set of discrete Langlands parameter is denoted $\Phi_2(\mathbf{G})$.
\end{definition}
For simplicity, set $G=\mathbf{G}(F)$. The local Langlands correspondence is, in general, a conjecture of the existence of a finite-to-one map 
$$\begin{array}{cccc}
     & \Irr{G} & \longrightarrow &  \Phi(G)\\
     & \pi & \longmapsto & \varphi_\pi
\end{array}$$
satisfying certain properties. The fibers of this map are called \emph{L-packets} of $\mathbf{G}$ (or $G$). Recall that the local Langlands correspondence has been constructed for $G_n(F)$ by Harris-Taylor \cite{HT} , Henniart \cite{Hen}, and Scholze \cite{Sch}; and for $U(V)$ by Mok \cite{Mok}, and Kaletha-Minguez-Shin-White \cite{KMSW}. It also extends to groups obtained by extensions of scalars like $G_n(E)$. These correspondences should send $\mathrm{Temp}(G)$ to $\Phi_{\mathrm{temp}}(G)$, and $\Pi_2(G)$ to $\Phi_2(G)$. For $\pi \in \Irr{G}$ and $\lambda \in \mathfrak{a}_{G, \mathbb{C}}^*$, we have 
$$\varphi_{\pi_\lambda} = \varphi_\pi \vert \cdot \vert^\lambda$$
with $\vert \cdot \vert^\lambda$ being a character from $W_F'$ to $X^*(G)\otimes \mathbb{C}^\times \cong (Z(G^\vee)^{W_F})^0$. If $G = G_1 \times G_2$ and $\pi = \pi_1 \boxtimes \pi_2$, then we have $^LG={}^LG_1\times_{W_F} {}^LG_2$ and $\varphi_\pi=\varphi_{\pi_1} \times \varphi_{\pi_2} $. For $M$ a Levi subgroup of $G$, $Q=MU$ a parabolic subgroup of $G$ and $\sigma \in \mathrm{Temp}(M)$, the representation $i_Q^G(\sigma)$ only depends, up to isomorphism, on the Levi subgroup $M$, hence we write $i_M^G(\sigma)=i_Q^G(\sigma)$. Denote by $\pi$ an irreducible subquotient of $i_M^G(\sigma)$, and let $\iota_M : {}^LM \to {}^LG$ be the natural embedding between L-groups. Then we have 
$$\varphi_\pi = \iota_M \circ \varphi_\sigma.$$
And conversely, if $\varphi_\pi = \iota_M \circ \varphi_M$ with $\varphi_M \in \Phi_2(M)$ then $\varphi_M$ is the Langlands parameter of a representation $\sigma \in \Pi_2(M)$ such that $\pi$ is an irreducible subquotient of $i_M^G(\sigma)$. 

In the case of $G = G_n(F)$ or $G_n(E)$ we even have bijections
$$\Irr{G} \to \Phi(G), \ \Pi_2(G) \to \Phi_2(G).$$
Moreover, if $\pi\in \Pi_2(G)$ then $\varphi_\pi\in \Phi_2(G)$ is irreducible, hence there exists an irreducible representation $\psi_\pi:W_F \to \mathrm{GL}_l(\mathbb{C})$ (in the case of $G_n(E)$, $\psi_\pi:W_E \to \mathrm{GL}_l(\mathbb{C})$), such that
$$\varphi_\pi = \psi_\pi \boxtimes \mathrm{Sym}^{d-1}$$
with $n=dl$, and $\mathrm{Sym}^{d-1}$ denotes the algebraic irreducible representation of dimension $d$ of $\mathrm{SL}_2(\mathbb{C})$. An irreducible representation $\pi$ of $G$ is supercuspidal if and only if its Langlands parameter $\varphi_\pi$ is discrete and trivial on $\mathrm{SL}_2(\mathbb{C})$. With the previous notation, $\psi_\pi$ corresponds to the Langlands parameter of an irreducible supercuspidal representation of $G_l(F)$ (or $G_l(E)$). We can define the Langlands parameter of the cuspidal support of $\pi$ denoted $\varphi_\pi^{ss}$, through the \emph{semi-simplification} map that is the composition of $\varphi_\pi$ with the embedding 
$$\begin{array}{cccc}
     & W_F & \longrightarrow & W_F \times \mathrm{SL}_2(\mathbb{C})=W'_F  \\
     & w & \longmapsto & \left( w, \begin{pmatrix}
         \vert w \vert^{1/2} & \\ & \vert w \vert^{-1/2}
     \end{pmatrix} \right)
\end{array}.$$

\begin{definition}[Conjugate-dual representations]
    Let $R$ be a representation of $W'_E$ (we also denote by $R$ the vector space associated to this representation), and $R^\rho$ be the conjugate representation with the same action of $\mathrm{SL}_2(\mathbb{C})$, and for $\tau\in W_E$, $R^\rho(\tau) =R( \rho\tau \rho^{-1})$. 

    The representation $R$ is \emph{conjugate-orthogonal}, if there is a non-degenerate bilinear form $B:R\times R \to \mathbb{C}$, which satisfies, 
    $$\left\{\begin{array}{cc}
         B(R(\tau) r , R(\rho\tau \rho^{-1})t) = B(r, t)  \\
         B(t, r) = B(r, R(\rho^2)t)
    \end{array} \right.,$$
    for all $\tau \in W_E'$.

    The representation $R$ is \emph{conjugate-symplectic}, if there is a non-degenerate bilinear form $B$ which satisfies
    $$\left\{\begin{array}{cc}
         B(R(\tau) r , R(\rho\tau \rho^{-1})t) = B(r, t)  \\
         B(t, r) = - B(r, R(\rho^2)t)
    \end{array} \right.,$$
    for all $\tau \in W_E'$.

    In both case, the form $B$ gives an isomorphism of representations
    $$f: R^\rho \to R^\vee,$$
    whose conjugate-dual
    $$(f^\vee)^\rho : R^\rho \to ((R^\rho)^\vee)^\rho \overset{\sim}{\rightarrow} R^\vee$$
    satisfies
    $$(f^\vee)^\rho = b.f$$
    where $b$ is the sign of $B$, i.e. $b=1$ in the conjugate-orthogonal case, $b=-1$ in the conjugate-symplectic case.

    We say that a representation is \emph{$(\pm1)$-conjugate-dual} when it is conjugate-orthogonal ($(1)$-conjugate-dual) or conjugate-symplectic ($(-1)$-conjugate-dual).
\end{definition}

From now on, we denote $\varphi^* = (\varphi^\vee)^\rho$, for all Langlands parameters $\varphi$ (and for any irreducible representation $\pi$ of $G$, the local Langlands correspondence should send $\pi^*$ to the parameter $\varphi_{\pi^*}=(\varphi_\pi)^*$).

\begin{ex}
    Let $\varphi = \psi \boxtimes \mathrm{Sym}^{d-1}$ be an irreducible Langlands parameter for $\mathrm{GL}_n(E)$. The representation $\mathrm{Sym}^{d-1}$ is orthogonal when $d$ is odd, and symplectic when $d$ is even. Then, $\varphi$ is conjugate-orthogonal (resp. conjugate-symplectic) if $\psi$ is conjugate orthogonal and $d$ is odd (resp. even) or $\psi$ is conjugate-symplectic and $d$ is even (resp. odd).

    A conjugate-dual Langlands parameter $\varphi = \bigoplus_{i=1}^k \varphi_i$ can also have an irreducible component $\varphi_i$ that is neither conjugate orthogonal nor conjugate-symplectic but in that case, there exists $j \in \{1, \dots, k\}$ such that $\varphi_j \cong \varphi_i^*$.
\end{ex}
If $\varphi:W'_F \to {}^LU(V)$ is a Langlands parameter of $U(V)$, then $\varphi$ determines a representation 
$$W'_E\to \mathrm{GL}(R)$$
which, according to the Theorem 8.1 in \cite{GGPlocalroot}, is \emph{conjugate-orthogonal} (resp. \emph{conjugate-symplectic}) when $n$ is odd (resp. even). From now on, we identify the Langlands parameters of $U(V)$ with the corresponding representations of $W_E'$. For any Levi subgroup $M=G_{n_1} \times \cdots \times G_{n_k} \times U(W)$ of $U(V)$, and any $\sigma = \sigma_1 \boxtimes \cdots \sigma_k \boxtimes \sigma_W \in \Irr{M}$, we write
$$\varphi_\sigma = (\bigoplus_{i=1}^k \varphi_i) \oplus \varphi_W$$
for $\varphi_i = \varphi_{\sigma_i}$ the Langlands parameter for $G_{n_i}(E)$, for all $i$ and $\varphi_W=\varphi_{\sigma_W}$ the Langlands parameter for $U(W)$.

In the quasi-split case, we fix a Whittaker datum $(N, \psi_N)$ for $U(n)$ and we denote by $\mathrm{Temp_{gen}}(U(n))$ the set of isomorphism class of tempered representations that are $(N, \psi_N)$-generic. Any representation in $\mathrm{Temp}(U(n))$ is a subquotient of a $i_M^{U(n)}(\sigma)$ where $\sigma \in \Pi_2(M)$, and all the irreducible subquotients of $i_M^{U(n)}(\sigma)$ are in the same L-packet which we denote $[i_M^{U(n)}(\sigma)]$. Thus, if $\mathcal{L}$ is a set of representatives of the conjugacy classes of the Levi subgroups of $U(n)$, we get an application
$$\begin{array}{cccc}
     & \bigsqcup_{M\in \mathcal{L}} (\Pi_2(M)/\mathrm{stab})/W_G(M) & \longrightarrow & \mathrm{Temp}(U(n))/\mathrm{stab} \\
     & \sigma & \longmapsto & [i_M^{U(n)}\sigma]
\end{array}$$
which is even a bijection. Indeed, the local Langlands correspondence provides the following commuting diagram
$$\xymatrix{
     \bigsqcup_{M\in \mathcal{L}} (\Pi_2(M)/\mathrm{stab})/W_G(M)  \ar[r]^{\ \ \ \ \ \ \ \cong} \ar[d]_{}  & \bigsqcup_{M\in \mathcal{L}} \Phi_2(M)\ar[d]^{} \\
     \mathrm{Temp}(U(n))/\mathrm{stab}  \ar[r]_{\cong} & \Phi_{\mathrm{temp}}(U(n))
     }$$
Recall that every irreducible tempered representation is a subquotient of an induced discrete series (VII.2.6 in \cite{Ren}). In other words, for all Langlands parameters $\varphi : W_F' \to {}^LG$, there exists a unique Levi subgroup ${}^LM$ of ${}^LU(n)$ that $\varphi$ factorizes through, and minimal for that property. Therefore, we obtain that the map on the right hand side of the above diagram, given by 

$$\begin{array}{cccc}
     & \bigsqcup_{M\in \mathcal{L}} \Phi_2(M) & \longrightarrow &  \Phi_{\mathrm{temp}}(U(n))  \\
     & \varphi_M & \longmapsto & \iota_M \circ \varphi_M
\end{array},$$
is a bijection. 
Finally, let $M\in \mathcal{L}$ and $\sigma\in \Pi_2(M)$. There is a unique $(N, \psi_N)$-generic subquotient $\pi \in [i_M^{U(n)}\sigma]$ (Theorem 7 in \cite{Rodier}), and every discrete L-packet contains a unique generic representation \cite{Konno}. We therefore have 
$$\underset{{M\in \mathcal{L}}}{\bigsqcup} (\Pi_2(M)/\mathrm{stab})/W_G(M) \cong \mathrm{Temp}(U(n))/\mathrm{stab} \cong \mathrm{Temp_{gen}}(U(n)).$$

\subsection{Base-change}

Let $E/F$ be a quadratic extension of non Archimedean local fields. Let $V$ be a hermitian space of dimension $n$ over $E$ the \emph{base-change} map
$$\begin{array}{cccc}
    BC: & ^LU(V) & \longrightarrow & ^LG_n(E)  \\
     & (g, \rho) & \longmapsto & (g, J_n^tg^{-1}J_n^{-1}, \rho)
\end{array}$$
induces a map between Langlands parameters $\Phi_{\mathrm{temp}}(U(V)) \to \Phi_{\mathrm{temp}}(G_n(E))$, and thus by composing with the local Langlands correspondences we get a map from $\mathrm{Temp}(U(V))/\mathrm{stab}$ to $\mathrm{Temp}(G_n(E))$, which we also denote by $BC$. This last base-change map is injective and its image is formed by representations $\pi\in \mathrm{Temp}(G_n(E))$, satisfying $\varphi_\pi^*\cong \varphi_\pi$ and $\varphi_\pi$ is $(-1)^{n-1}$-conjugate-dual. For all $\tau \in \mathrm{Temp}(G_m(E))$ and $\sigma \in \mathrm{Temp}(U(W))$ we have 
$$BC(\tau \rtimes \sigma) = \tau \times \tau^* \times BC(\sigma).$$
Let $\eta'_n$ be a character of $G_n(E)$, defined by
    
    $$\eta'_n(g) = \left\{\begin{array}{cc}
         \eta'(\det g), & \ n \ \mathrm{even},  \\
         1, & \ n \ \mathrm{odd}.
    \end{array} \right.$$
We will need a twist of this base-change map, defined by $BC_n (\sigma) = BC(\sigma) \otimes \eta'_n$, for all $\sigma \in \mathrm{Temp}(U(V))/\mathrm{stab}$. For the quasi-split unitary group $U(n)$, we can describe the image of $BC_n$. A representation $\pi \in \mathrm{Temp}(G_n(E))$ is in the image of $BC_n: \mathrm{Temp}(U(n)) \to \mathrm{Temp}(G_n(E))$ if and only if it can be written as 
$$\pi = \left( \bigtimes_{i=1}^k \tau_i \times \tau_i^* \right) \times \bigtimes_{j=1}^l \mu_j$$
where for every $1\leq i \leq k$, $\tau_i \in \Pi_2(G_{n_i}(E))$ for some $n_i\geq 1$ and for every $1\leq j \leq l$, $\mu_j \in BC_{m_j}(\mathrm{Temp}(U(m_j))) \cap \Pi_2(G_{m_j}(E))$ for some $m_j \geq 1$. Moreover, we have seen that $\mathrm{Temp}(U(n))/\mathrm{stab}$ can be identified with $\mathrm{Temp_{gen}}(U(n))$. By using Theorem 4.2 of \cite{Mat}, we obtain that the image of $\mathrm{Temp_{gen}}(U(n))$ through the base-change map is the set of equivalence classes of $G_n(F)$-distinguished irreducible representations $\mathrm{Temp}_{G_n(F)}(G_n(E))$, that is

$$\mathrm{Temp_{gen}}(U(n))\cong \mathrm{Temp}(U(n))/\mathrm{stab}\underset{BC_n}{\cong} \mathrm{Temp}_{G_n(F)}(G_n(E)).$$

\subsection{Asai L-functions}

Let $E/F$ be a quadratic extension of non Archimedean local field, and $G=G_n(E)$. Let $R$ be a representation of ${}^LG$, for any Langlands parameter $\varphi$ attached to an irreducible representation $\pi$ of $G$, we get a Weil-Deligne representation $R \circ \varphi$ and write $L(s, \pi, \mathrm{R})=L(s, R\circ \varphi)$. For a standard parabolic subgroup $Q=MU$ of $G$, and $\sigma$ an irreducible representation of $M$ with Langlands parameter $\iota\circ \varphi_\sigma$, such that $\pi = i_Q^{G}\sigma$,  we set 
$$L(s, \pi, \mathrm{Ad}):= L(s, \mathrm{Ad}\circ \iota \circ\varphi_\sigma).$$
The following proposition is a reformulation of the Bernstein-Zelevinsky irreducibility criterion, Theorem 4.2 \cite{BZ1}.
\begin{prop}\label{LAd}
    Let $Q=MU$ be a standard parabolic subgroup of $G$, $\sigma$ be an irreducible supercuspidal representation of $M$, and $\pi = i_Q^{G}\sigma$. The representation $\pi$ is irreducible if and only if $L(s, \pi, \mathrm{Ad})$ has no pole at $s=1$.
\end{prop}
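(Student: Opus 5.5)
The plan is to reduce both conditions to one explicit condition on the supercuspidal blocks of $\sigma$, and to check that the two conditions agree. Write $M = G_{n_1}(E)\times\cdots\times G_{n_k}(E)$ and $\sigma=\sigma_1\boxtimes\cdots\boxtimes\sigma_k$ with each $\sigma_i$ irreducible supercuspidal, and set $\nu=\vert\det\vert_E$. On the representation side we use Theorem 4.2 of \cite{BZ1}: $\pi=\sigma_1\times\cdots\times\sigma_k$ is irreducible if and only if there is no pair $i\neq j$ with $\sigma_j\cong\sigma_i\nu$. This is the statement that no two of the segments $\{\sigma_i\}$ are linked. So it suffices to show that $L(s,\pi,\mathrm{Ad})$ has a pole at $s=1$ if and only if $\sigma_j\cong \sigma_i\nu$ for some $i,j$.

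\textbf{Step 1: decompose the adjoint $L$-function.} By the recollection on the local Langlands correspondence, $\varphi_{\sigma_i}=\varphi_i$ is an irreducible representation of $W_E$, trivial on $\mathrm{SL}_2(\mathbb{C})$, since $\sigma_i$ is supercuspidal. Moreover $\iota\circ\varphi_\sigma$ corresponds to $\bigoplus_i\varphi_i$. The $L$-group ${}^LG_n(E)$ is the $W_F$-form $(\mathrm{GL}_n(\mathbb{C})\times\mathrm{GL}_n(\mathbb{C}))\rtimes W_F$. On $\mathfrak{gl}_n\oplus\mathfrak{gl}_n$, the adjoint representation composed with the parameter is $\mathrm{Ind}_{W_E}^{W_F}$ of the $W_E$-representation $\mathrm{Ad}\circ(\oplus_i\varphi_i)$. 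By inductivity of local $L$-factors we may therefore work over $E$ and obtain
$$L(s,\pi,\mathrm{Ad})=\prod_{i,j=1}^k L(s,\varphi_i\otimes\varphi_j^\vee).$$
If one prefers $\mathfrak{sl}_n$ to $\mathfrak{gl}_n$, only one trivial factor $L(s,\mathbf{1})=(1-q_E^{-s})^{-1}$ changes, and it has no pole at $s=1$. So the convention is irrelevant.

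\textbf{Step 2: locate the poles of each factor.} The representation $\varphi_i\otimes\varphi_j^\vee$ is trivial on $\mathrm{SL}_2$. Its $L$-factor is $\det(1-q_E^{-s}\mathrm{Frob}\mid (\varphi_i\otimes\varphi_j^\vee)^{I_E})^{-1}$, and it has a pole at $s_0$ exactly when the $W_E$-invariants of $\varphi_i\otimes\varphi_j^\vee\otimes\vert\cdot\vert^{s_0}$ are nonzero. Since $\varphi_i,\varphi_j$ are irreducible, Schur's lemma makes this equivalent to $\varphi_j\cong\varphi_i\vert\cdot\vert^{s_0}$ (up to the sign convention for $\vert\cdot\vert$). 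At $s_0=1$ this reads $\varphi_j\cong\varphi_i\vert\cdot\vert$.

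\textbf{Step 3: transfer back to representations.} By the twisting compatibility $\varphi_{\pi_\lambda}=\varphi_\pi\vert\cdot\vert^\lambda$ and injectivity of the LLC for $\mathrm{GL}$, the condition $\varphi_j\cong\varphi_i\vert\cdot\vert$ is the same as $\sigma_j\cong\sigma_i\nu$. The case $i=j$ cannot occur, because $\sigma_i\not\cong\sigma_i\nu$: their central characters have different absolute values. Hence a pole at $s=1$ occurs exactly when some pair $i\neq j$ satisfies $\sigma_j\cong\sigma_i\nu$, which is the Bernstein--Zelevinsky reducibility condition. This proves the proposition.

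The main obstacle is bookkeeping rather than substance. We must make sure that the normalization of $\vert\cdot\vert$ on $W_E$ under the LLC matches $\nu$ and not $\nu^{-1}$. This does not matter in the end, because the product runs over all ordered pairs $(i,j)$, so the set of poles is symmetric under swapping $i$ and $j$. We must also check that the $W_F$-form of the $L$-group introduces no spurious factors; this is handled by inductivity in Step 1.
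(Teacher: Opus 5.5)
Your proposal is correct and follows the paper's proof. Both arguments factor $L(s,\pi,\mathrm{Ad})$ as $\prod_{i,j}L(s,\varphi_i\otimes\varphi_j^\vee)$, identify a pole at $s=1$ with $\sigma_j\cong\sigma_i\vert\det\vert_E$ via Schur's lemma on $\varphi_i\otimes\varphi_j^\vee\cong\mathrm{Hom}(\varphi_j,\varphi_i)$, and conclude with the Bernstein--Zelevinsky criterion; your extra checks (ruling out $i=j$ by central characters, inductivity for the $W_F$-form of the $L$-group, and the $\vert\cdot\vert$ sign convention) usefully fill in points the paper leaves implicit.
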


\begin{proof}
   We have $M=G_{n_1}(E)\times \cdots \times G_{n_k}(E)$ and $\pi = \sigma_1 \times \cdots \times \sigma_k$. Let $\mathrm{Frob}_E$ be a Frobenius of the Weil group $W_E$ of $E$, we have $W_E=\mathcal{I}_E \rtimes \langle \mathrm{Frob}_E\rangle$. If $\varphi_\pi$ is the Langlands parameter corresponding to $\pi$, we have
   $$\mathrm{Ad}\circ \varphi_\pi = \varphi_\pi \otimes \varphi_\pi^\vee,$$
   and 
   $$\varphi_\pi = \varphi_1 \oplus \cdots \oplus \varphi_k,$$
   where $\varphi_i$ is the Langlands parameter of $\sigma_i$, for all $i\in \{ 1, \dots, k \}$. Hence, 
   $$L(s, \pi, \mathrm{Ad}) = \prod_{1 \leq i, j \leq k }L(s, \varphi_i\otimes \varphi_j^\vee).$$
   The function $L(s, \pi, \mathrm{Ad})$ has a pole at $1$ if and only if there exists $i,j\in \{1, \dots , k\}$, $i\not = j$, such that 
   $$\mathrm{det}(1-q_E^{-1}\varphi_i\otimes \varphi_j^\vee (\mathrm{Frob}_E)\vert V^{\mathcal{I}_E})=0$$
   where $V$ denotes the vector space $V_i \otimes V_j^\vee$ of the representation $\varphi_i \otimes \varphi_j^\vee$. Recall that we have a $W'_E$-equivariant isomorphism
   $$\begin{array}{cccc}
        & V_i\otimes V_j^\vee & \longrightarrow & \mathrm{Hom}(V_j, V_i) \\
        & v_i\otimes v_j^\vee & \longmapsto & (w\in V_j \mapsto v_j^\vee(w)v_i)
   \end{array}$$
   As $q_E^{-1} = \vert \mathrm{Frob}_E \vert$ is an eigen value of $\varphi_i\otimes \varphi_j^\vee (\mathrm{Frob}_E)$ if and only if for some non-zero $v\in V^{\mathcal{I}_E}$ 
   $$\varphi_i \otimes \varphi_j^\vee (\mathrm{Frob}_E)v= q_E^{-1}v,$$
   we want to find the fixed points of the action of $W_E$ on $\mathrm{Hom}(V_j, \vert  \cdot  \vert_E V_i)$. These fixed points correspond to the intertwining operators between $\vert  \cdot  \vert_E \varphi_i$ and $\varphi_j$ (which are irreducible). Finally, $L(s, \pi, \mathrm{Ad})$ has a pole at $1$ if and only if there exist $i,j\in \{1, \dots , k\}$, $i\not = j$, such that  $\mathrm{Hom}_{W_E}(V_j, \vert  \cdot  \vert_E V_i) \not = 0$ or equivalently, such that $\sigma_j \cong \sigma_i \vert\det \vert_E$. Thanks to the irreducibility criterion of Bernstein-Zelevinsky \cite{BZ1} (Theorem 4.2), we conclude that $L(s, \pi, \mathrm{Ad})$ has no pole at $s=1$ if and only if $\pi$ is irreducible.
\end{proof}

We recall the definitions and results we will use, most of them come from \cite{GGPlocalroot}.

\begin{definition}[Asai L-functions]
    Let $V$ be a Hermitian space, and $U(V)$ be its associated unitary group. Let ${}^LU(V) = \mathrm{GL}(M)\rtimes W_F$. The L-group of $\mathrm{Res}_{E/F}(\mathrm{GL}(V))$ can be identified as
    $$H= (\mathrm{GL}(M) \times \mathrm{GL}(M)) \rtimes \mathrm{Gal}(E/F),$$
    where $\mathrm{Gal}(E/F)\cong \mathbb{Z}/2\mathbb{Z}$ acts by permuting the two factors of $\mathrm{GL}(M)$. Let $H_0 = \mathrm{GL}(M) \times \mathrm{GL}(M)$, we have 
    $$\mathrm{Ind}_{H^0}^H(M \boxtimes M) = \mathrm{As}^+(M) \oplus \mathrm{As}^-(M)$$
    where $\mathrm{As}^+(M)$ is the space $M \boxtimes M$ on which $\mathrm{Gal}(E/F)$ acts by permuting the two copies of $M$, and $\mathrm{As}^-(M)$ is obtained by twisting $\mathrm{As}^+(M)$ by the non trivial character of $H/H_0$.
\end{definition}
Let $\rho$ be an element of $W_F$ generating the quotient $W_F/W_E$. Given a representation 
$$\varphi : W'_E \to \mathrm{GL}(M),$$
we get a morphism
$$\Tilde{\varphi}: W'_F \to H,$$ 
defined by 
$$\Tilde{\varphi}(w) = (\varphi(w), \varphi(\rho w\rho^{-1})), $$
if $w\in W_E'$, and 
$$\Tilde{\varphi}(\rho) = ((\mathrm{Id}, \varphi(\rho^2)), \overline{1})$$
where $\overline{1}$ is the non-trivial element of $\mathbb{Z}/2 \mathbb{Z}$ permuting the two copies of $M$. Through this map we get an action of $W_F'$ on $\mathrm{As}^\pm(M)$, and 
$$\mathrm{As}^-(M) = \mathrm{As}^+(M)\otimes \eta_{E/F}.$$
Here we use mainly the results of the part 7 of \cite{GGPlocalroot}, and more precisely, thanks to Proposition 7.4 of op. cit. we get the following equality, for all $\pi \in BC_n(\mathrm{Temp_{gen}}(U(n)))$
$$L(s, \pi , \mathrm{Ad}) = L(s, \pi , \mathrm{As}^+)L(s, \pi , \mathrm{As}^-).$$

\subsection{Rankin-Selberg Zeta integrals}

Let $\pi \in \mathrm{Irr_{gen}}(G_n(E))$ and $e_n=(0, \dots, 0,1)\in F^n$. For all $W\in \mathcal{W}(\pi, \psi_n)$, $\phi \in C_c^\infty(F^n)$ and $s\in \mathbb{C}$, the Rankin-Selberg Zeta integral is defined by 
$$Z(s, W, \phi)=\int_{N_n(F)\backslash G_n(F)} W(h) \phi(e_nh)\vert\det h \vert_F^sdh$$
and is absolutely convergent whenever $\Re(s)$ is sufficiently large. Moreover, if $\pi$ is tempered, then $Z(s, W, \phi)$ is absolutely convergent whenever $\Re(s)> 0$ (\cite{Fli}, Section 4, Proposition (i)). Let $\hat{\phi}$ denote the Fourier transform of $\phi \in C_c^\infty(F^n)$ with respect to the additive character $\psi'$ and the selfdual measure. Explicitely, we have 
$$\hat{\phi}(x_1, \dots, x_n)= \int_{F^n}\phi(y_1, \dots, y_n)\psi'(x_1y_1+ \cdots+ x_ny_n)d(y_1, \dots, y_n),$$
such that $\hat{\hat{\phi}}(x)=\phi(-x)$, $x\in F^n$. Let $\pi \in \mathrm{Temp}(G_n(E))$, for $ W\in \mathcal{W}(\pi, \psi_n)$, we set
$$\beta (W) = \int_{N_n(F)\backslash P_n(F)} W(p)dp$$
and this integral is absolutely convergent according to Lemma 2.151 in \cite{RBPPlanch}.

\begin{prop}\label{ZBeta}
    Let $\pi \in \mathrm{Temp}_{G_n(F)}(G_n(E))$. For all $W\in \mathcal{W}(\pi, \psi_n)$ and $\phi\in C_c^\infty(F^n)$, 
    $$Z(1, W, \phi)=\hat{\phi}(0)\beta(W), $$
    up to a constant that depends on the choices of the measures on $N_n(F) \backslash G_n(F)$ and $N_n(F)\backslash P_n(F)$.
\end{prop}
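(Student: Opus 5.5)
We will use the Iwasawa-type decomposition $G_n(F) = P_n(F) Z_n(F) K$, or more precisely the integration formula
\[
\int_{N_n(F)\backslash G_n(F)} f(h)\,dh = \int_{K}\int_{F^\times}\int_{N_n(F)\backslash P_n(F)} f(p\,a\,k)\,|a|_F^{-1}\,dp\,d^\times a\,dk ,
\]
where $a$ runs over the center $Z_n(F)\cong F^\times$ and $K=G_n(\mathcal{O}_F)$, with the constant fixed by the choice of measures. Since $e_n p = e_n$ for $p\in P_n(F)$, we get $\phi(e_n p a k) = \phi(a e_n k)$. Taking $\Re(s)$ large (or $\Re(s)>0$ in the tempered case, by the convergence result of Flicker quoted above), the zeta integral becomes
\[
Z(s,W,\phi) = \int_K \int_{F^\times} \omega_\pi(a)\,|a|_F^{ns-1}\,\phi(a e_n k)\, \beta\bigl(\pi(k)W\bigr)\, d^\times a\, dk .
\]
Here the central character of $\pi$ is restricted to $F^\times$ and the inner integral over $N_n(F)\backslash P_n(F)$ is $\beta(\pi(a k)W)=\omega_\pi(a)\beta(\pi(k)W)$.

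Next we use distinction. Because $\pi$ is $G_n(F)$-distinguished, $\omega_\pi$ is trivial on $F^\times$; this follows from the $H$-invariant form and $Z_n(F)\subset H$. The key input is that $\beta$ is itself $G_n(F)$-invariant on $\mathcal{W}(\pi,\psi_n)$ for $\pi \in \mathrm{Temp}_{G_n(F)}(G_n(E))$. I would take this from \cite{RBPPlanch}, or \cite{Fli}/Ok, where it is shown that the $P_n(F)$-invariant functional $\beta$ extends to an $H$-invariant one, using uniqueness of $P_n(F)$-invariant forms and the fact that the space of $H$-invariant forms is one-dimensional. Granting this, $\beta(\pi(k)W)=\beta(W)$. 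We then obtain
\[
Z(1,W,\phi) = \beta(W)\int_K\int_{F^\times} \phi(a e_n k)\,|a|_F^{n}\, d^\times a\, dk = c\,\beta(W)\int_{F^n}\phi(x)\,dx = c\,\beta(W)\,\hat\phi(0).
\]
The second equality is the polar decomposition $F^n\setminus\{0\} = F^\times\cdot e_nK$, whose measure is $|a|^n d^\times a\,dk$ up to a constant.

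The main obstacle is the invariance of $\beta$ under all of $G_n(F)$, rather than just $P_n(F)$, together with justifying convergence at $s=1$. I would handle this by analytic continuation. For $\Re(s)$ large, the identity with $|a|^{ns-1}$ gives $Z(s,W,\phi)$ as $\beta$ paired against a Tate-type integral. Then, as in Flicker's argument, the functional $W\mapsto \lim_{s\to1}(\ldots)$ is $H$-invariant, since $Z(s,W,\phi)$ has a simple pole at $s=1$ whose residue is proportional to $\phi(0)\beta(W)$. If the direct invariance statement is not citable, I would instead prove the proposition by observing that both sides are $P_n(F)$-invariant in $W$. I would then reduce to $\phi$ supported near $e_nK$ to compare.
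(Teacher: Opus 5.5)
Your strategy is the paper's. Both arguments decompose the integral over $N_n(F)\backslash G_n(F)$ through $P_n(F)$. The paper does this via $N_n(F)\backslash P_n(F)$ and $P_n(F)\backslash G_n(F)\simeq F^n\setminus\{0\}$; your decomposition $P_n(F)Z_n(F)K$ plus polar coordinates on $F^n$ amounts to the same thing. Both then quote the $G_n(F)$-invariance of $\beta$ (the paper cites Section 4 of \cite{Offen}) and recognize $\int_{F^n}\phi(x)\,dx=\hat\phi(0)$.

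However, the one computation that carries the content of the statement is wrong as written. Left Haar measure on $P_n(F)$ is $|\det p|_F^{-1}$ times right Haar measure, $\beta$ is defined with the right-invariant measure on $N_n(F)\backslash P_n(F)$, and the center contributes no modulus factor. So the correct formula is
\[
\int_{N_n(F)\backslash G_n(F)} f(h)\,dh=\int_K\int_{F^\times}\int_{N_n(F)\backslash P_n(F)} f(pak)\,|\det p|_F^{-1}\,dp\,d^\times a\,dk ,
\]
not the one with $|a|_F^{-1}$. Consequently
\[
Z(s,W,\phi)=\int_K\int_{F^\times}\phi(ae_nk)\,\omega_\pi(a)\,|a|_F^{ns}\int_{N_n(F)\backslash P_n(F)}W(pk)\,|\det p|_F^{s-1}\,dp\,d^\times a\,dk ,
\]
and the inner integral equals $\beta(\pi(k)W)$ only at $s=1$. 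This cancellation of $|\det p|_F^{s-1}$ is precisely why the identity holds at $s=1$; it is the factor $|\det ph|\,|\det p|^{-1}$ in the paper's proof.

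Your general-$s$ formula, with $\beta(\pi(k)W)$ and $|a|_F^{ns-1}$, is false. It is inconsistent with your own measure formula, which would leave $|\det p|_F^{s}$ inside the $p$-integral. It is also inconsistent with your $s=1$ line, which needs $|a|_F^{n}$, not $|a|_F^{n-1}$, to produce $\int_{F^n}\phi(x)\,dx$. Your final displayed identity is correct, but only because these two factors were silently dropped.

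The obstacle you single out is not one, and your fallback rests on a false premise. For tempered $\pi$, $Z(s,W,\phi)$ converges absolutely for $\Re(s)>0$ (\cite{Fli}), and $\beta(W)$ converges absolutely (\cite{RBPPlanch}, Lemma 2.151). So Fubini applies directly at $s=1$ and no analytic continuation is needed. In particular, $s=1$ lies in the region of absolute convergence, so $Z(s,W,\phi)$ is holomorphic there. There is no residue at $s=1$, let alone one proportional to $\phi(0)\beta(W)$. A factor $\phi(0)$ is produced by the Tate-type integral $\int_{F^\times}\phi(ae_nk)|a|_F^{ns}\,d^\times a$ near $s=0$, where the accompanying $p$-integral carries the weight $|\det p|_F^{s-1}$ and is not $\beta$. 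Drop the continuation argument: with the corrected measure formula, the direct computation at $s=1$ is the whole proof.
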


\begin{proof}
    Let $\pi \in \mathrm{Temp}_{G_n(F)}(G_n(E))$. First of all, recall that the restriction of $\beta$ to $\mathcal{W}(\pi, \psi_n)$ is $G_n(F)$-invariant (see Section 4 of \cite{Offen}). Let $W\in \mathcal{W}(\pi, \psi_n)$ and $\phi\in C_c^\infty(F^n)$, we have 
    \begin{align*}
         Z(1,W, \phi) &= \int_{P_n(F)\backslash G_n(F)} \int_{N_n(F)\backslash P_n(F)} W(ph) \phi(e_nph) \vert \det ph \vert \vert \det p \vert^{-1} dp dh, \\
        & = \int_{N_n(F)\backslash P_n(F)} W(p) dp \int_{P_n(F)\backslash G_n(F)} \phi(e_nh) \vert \det h \vert dh.
    \end{align*}
    Setting $x=e_nh\in F^n$, we get 
    $$Z(1,W, \phi)= \beta(W) \int_{F^n}\phi(x)dx = \hat{\phi}(0) \beta(W).$$
\end{proof}

\section{Cuspidal support and base-change}\label{cuspBC}

Let $E/F$ be a quadratic extension of local non-Archimedean fields and $G=G_n(E)$. In this section we build a base-change map for cuspidal support, i.e. from $\Omega(U(n))$ to $\Omega(G)$, as follows. Recall that we have maps  
$$\Irr{U(n)} \longrightarrow \Omega (U(n)), \ \Irr{G_n(E)}  \longrightarrow  \Omega (G_n(E))$$
both denoted $\mathbf{Cs}$. For a cuspidal support $[M,\sigma] \in \Omega(U(n))$, we can write 
$$M=G_{n_1} \times \cdots \times G_{n_k} \times U(m), \ \mathrm{and} \ \sigma= \sigma_1 \boxtimes \cdots \boxtimes \sigma_k \boxtimes \sigma_u,$$
which gives rise to a Levi subgroup $M'$ of $G_n(E)$ of the form 
$$M'=G_{n_1} \times G_{n_1}\cdots \times G_{n_k} \times G_{n_k}\times G_{m},$$
and $\sigma$ to the following representation of $M'$
$$BC(\sigma) = \sigma_1 \boxtimes \sigma_1^* \cdots \boxtimes \sigma_k \boxtimes \sigma_k^* \boxtimes BC(\sigma_u).$$
We choose a representative of $\mathbf{Cs}(BC(\sigma_u))$, given by a Levi $L$ of $G_m$ and a supercuspidal representation $\tau$, such that we get a cuspidal support in $G_n(E)$ associated to $BC(\sigma)$,
$$[G_{n_1}^2\cdots \times G_{n_k}^2 \times L, \sigma_1 \boxtimes \sigma_1^* \cdots \boxtimes \sigma_k \boxtimes \sigma_k^* \boxtimes \tau]\in \Omega(G_n(E)).$$
Now, if we denote
$$\mathcal{M}=G_{n_1}^2\cdots \times G_{n_k}^2 \times L, \ \mathrm{and} \ CBC(\sigma) =  \sigma_1 \boxtimes \sigma_1^* \cdots \boxtimes \sigma_k \boxtimes \sigma_k^* \boxtimes \tau,$$
where $CBC$ stands for "cuspidal base-change", then we have defined a map from $\Omega(U(n))$ to $\Omega (G)$, by $[M, \sigma] \mapsto CBC([M, \sigma]) = [\mathcal{M}, CBC(\sigma)]$. We define similarly $CBC_n$, by replacing $BC$ by $BC_n$ in the previous definition. This $CBC$ map is called "extended cuspidal support" in \cite{Moeglin}, from which we use the results of Parts 5.3 and 5.4, in particular that the following diagram commutes
$$\xymatrix{
     \mathrm{Irr}(U(n))  \ar[r]^{BC} \ar[d]_{\mathbf{Cs}}  & \mathrm{Irr}(G_n(E)) \ar[d]^{\mathbf{Cs}} \\
     \Omega(U(n))  \ar[r]_{CBC} & \Omega(G_n(E))
     }$$
We remark that the base change map provides an injective map
$$\mathrm{Irr_{gen}}(U(n)) \hookrightarrow \mathrm{Irr}(G_n(E)),$$
because each  discrete L-packet of $U(n)$ contains at most one generic representation \cite{Konno}. Since $\mathbf{Cs}$ induces an ismorphism between $\mathrm{Irr_{gen}}(U(n))$ and $\Omega_\mathrm{{gen}}(U(n))$, the above diagram can also be given in more detail, 
$$\xymatrix{
     \mathrm{Irr_{gen}}(U(n)) \ar[r] \ar[d]_{\mathbf{Cs}} & \Phi(U(n))  \ar[r]^{BC_n} \ar[d]^{^{ss}} & \Phi(G_n(E))  \ar[r]^{LLC} \ar[d]^{^{ss}}  & \mathrm{Irr}(G_n(E)) \ar[d]^{\mathbf{Cs}} \\
     \Omega_{\mathrm{gen}}(U(n)) \ar[r] & \mathcal{B}(U(n))  \ar[r]_{BC_n} & \mathcal{B}(G_n(E)) \ar[r]_{LLC}& \Omega(G_n(E))
     }$$
where $\mathcal{B}(U(n))$ is the set of Langlands parameters from $W_F'$ to $^LU(n)$ that are trivial on $\mathrm{SL}_2(\mathbb{C})$, and $\mathcal{B}(G_n(E))$ is the set of semi-simple representations of $W_E$ of dimension $n$. These sets can also be seen as the image of $\Phi(U(n))$ (resp. $\Phi(G_n(E))$) through the semi-simplification map
$$\mathcal{B}(U(n))=\left\{\varphi^{ss}: w \mapsto \varphi\left(w, \begin{pmatrix}
    \vert w\vert^{1/2} & \\ & \vert w\vert^{-1/2}
\end{pmatrix}\right), \ \varphi \in \Phi(U(n))  \right\}.$$
Let $\Phi_{\mathrm{gen}}(U(n))$ be the set of generic parameters, i.e. $\varphi_\pi$ with $\pi \in \mathrm{Irr_{gen}}(U(n))$. Thanks to Proposition B.1 of \cite{GanIchino}, this set can be described as
$$\Phi_{\mathrm{gen}}(U(n)) = \{ \varphi \in \Phi(U(n)), \ L(1, \varphi, \mathrm{Ad})\not = \infty\} $$
where $\mathrm{Ad}$ is the adjoint representation of $^LU(n)$ on its Lie algebra. Recall that 
$$BC_n : \Phi(U(n)) \to \Phi(G_n(E)) \ \ (\mathrm{resp.} \ \ BC_n: \mathcal{B}(U(n))\to \mathcal{B}(G_n(E))),$$
is injective, and its image is the set of conjugate-orthogonal parameters in $\Phi(G_n(E))$ (resp. $\mathcal{B}(G_n(E)))$). Furthermore, for $\varphi \in \Phi(U(n))$ we have 
$$\mathrm{Ad}_{U(n)}\circ \varphi \cong \mathrm{As}^-\circ BC_n(\varphi).$$
We will now show that 
$$\mathbf{Cs} \circ BC_n : \mathrm{Irr_{gen}}(U(n)) \to \Omega(G_n(E))$$
is injective. Since $\mathbf{Cs}$ induces an ismorphism between $\mathrm{Irr_{gen}}(U(n))$ and $\Omega_\mathrm{{gen}}(U(n))$, it suffices to show that 
$$CBC : \Omega_\mathrm{{gen}}(U(n))\to \Omega(G_n(E))$$
is injective. Using the extended commuting diagram above, we want the composition
$$ \mathrm{Irr_{gen}}(U(n)) \longrightarrow \mathcal{B}(U(n))  \underset{BC_n}{\hookrightarrow} \mathcal{B}(G_n(E)) \overset{\cong}{\underset{LLC}{\longrightarrow}} \Omega(G_n(E))$$
to be injective. Hence, it remains to show that
$$\Omega_\mathrm{{gen}}(U(n)) \to \mathcal{B}(U(n))$$
is an injection. In fact, the following statement even shows that this is a bijection.


\begin{prop}\label{propcuspBC}
    Let $\psi:W_E \to \GL{n}(\mathbb{C})$ be a conjugate-orthogonal representation. There exists a unique conjugate-orthogonal Langlands parameter $\varphi \in \Phi(G_n(E))$ verifying the following conditions.
    \begin{enumerate}
        \item $\varphi^{ss}=\psi$, 
        \item $L(s, \varphi, \mathrm{As}^-)$ has no pole at $s=1$.
    \end{enumerate}
\end{prop}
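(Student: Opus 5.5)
We prove existence and uniqueness through an explicit combinatorial description in terms of Zelevinsky segments. First decompose $\psi$ into irreducibles of $W_E$ and group them into $\mathbb{Z}$-lines: two irreducible constituents $\rho,\rho'$ lie on the same line if $\rho'\cong \rho\vert\cdot\vert_E^{a}$ for some $a\in\mathbb{Z}$. The conjugate-duality $\rho\mapsto\rho^*$ permutes the lines, since $(\rho\vert\cdot\vert^a)^*=\rho^*\vert\cdot\vert^{-a}$. A parameter $\varphi\in\Phi(G_n(E))$ with $\varphi^{ss}=\psi$ is a direct sum $\bigoplus \rho\vert\cdot\vert^{x}\boxtimes\mathrm{Sym}^{d-1}$. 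Equivalently, it is a multiset of segments $\Delta=[\rho\vert\cdot\vert^{a},\rho\vert\cdot\vert^{b}]$ whose union, as a multiset, is the multiset of constituents of $\psi$. The condition that $\varphi$ be conjugate-orthogonal means that this multiset of segments is stable under $\Delta\mapsto\Delta^*$, with the self-dual segments carrying the correct sign. By the Example, a self-dual segment $\rho_0\boxtimes\mathrm{Sym}^{d-1}$ has a prescribed parity of $d$ depending on whether the central $\rho_0$ is conjugate-orthogonal or conjugate-symplectic.

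Next we translate condition 2. Using $\mathrm{As}^-\circ BC_n(\varphi_U)\cong \mathrm{Ad}_{U(n)}\circ\varphi_U$ and $L(s,\mathrm{Ad})=L(s,\mathrm{As}^+)L(s,\mathrm{As}^-)$, we have
\[
L(s,\varphi,\mathrm{As}^-)=\prod_{\{i,j\}} L(s,\varphi_i\otimes\varphi_j^{c}\text{-type terms})\cdot\prod_i L(s,\varphi_i,\mathrm{As}^{\pm}).
\]
Concretely, the pairs of non-self-dual segments contribute Rankin--Selberg factors $L(s,\Delta_i\times\Delta_j^{*\vee})$-like terms, and the self-dual segments contribute Asai factors. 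A pole at $s=1$ in a Rankin--Selberg factor $L(s,\Delta\times\Delta'^\vee)$ occurs exactly when the two segments are linked in the Zelevinsky sense, after the shift by $\vert\cdot\vert$. This is the same computation as in Proposition~\ref{LAd}, now carried out with $\mathrm{SL}_2$-parts via the Clebsch--Gordan decomposition. The condition for an Asai factor $L(s,\rho_0\boxtimes \mathrm{Sym}^{d-1},\mathrm{As}^{-})$ is handled similarly, via the decomposition of $\mathrm{Sym}^{d-1}\otimes\mathrm{Sym}^{d-1}$ into its symmetric and alternating parts. So condition 2 should become: no two segments in the multiset are linked, and each self-dual segment has the right parity. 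This is precisely the characterization of generic parameters from Proposition B.1 of \cite{GanIchino}, combined with the Bernstein--Zelevinsky criterion.

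With this reformulation, we handle existence and uniqueness on each line separately, together with its dual line if that is different. On a line, the constituents form a multiset of exponents in $x+\mathbb{Z}$. A multiset of pairwise unlinked segments covering it is unique: it is obtained by greedily taking maximal segments, so the segments are nested or juxtaposed-free, which is the "ladder" of the generic representation with the given cuspidal support (as in Zelevinsky's classification, the generic irreducible in $\sigma_1\times\cdots\times\sigma_k$ is unique). This uniqueness shows that the unlinked segment multiset is automatically $*$-stable, since its dual is again unlinked with the same support. What remains is to check that self-dual segments have the sign forced by conjugate-orthogonality of $\psi$.

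We expect the main obstacle to be exactly this sign and parity bookkeeping for self-dual lines, where the central exponents are $0$ or $\pm\tfrac12$. One has to show that conjugate-orthogonality of $\psi$ forces the maximal self-dual unlinked segments to be conjugate-orthogonal, and also that the Asai-minus condition excludes the "wrong" parity. We would attack it by induction on $n$: strip off the outermost self-dual segment $[\rho_0\vert\cdot\vert^{-(d-1)/2},\rho_0\vert\cdot\vert^{(d-1)/2}]$, and compute that the residual $\psi'$ is still conjugate-orthogonal. We then identify, for conjugate-orthogonal versus conjugate-symplectic $\rho_0$, which $d$ keep $L(s,\rho_0\boxtimes\mathrm{Sym}^{d-1},\mathrm{As}^-)$ holomorphic at $s=1$. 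We would check this against the formula $\mathrm{As}^-(\rho_0\boxtimes\mathrm{Sym}^{d-1})=\bigoplus_k \mathrm{As}^{\pm}(\rho_0)\boxtimes\mathrm{Sym}^{2k}$, in which the sign alternates with $k$.
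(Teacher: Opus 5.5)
Your overall framework matches the paper's: segments on lines, conjugate-orthogonality read as $*$-stability plus parity of the symmetric segments, and $\mathrm{As}^-$ computed via Clebsch--Gordan and the $\mathrm{Sym}^2/\Lambda^2$ splitting. There is, however, a genuine gap in your reformulation of condition 2.

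\textbf{The gap.} You claim that, for conjugate-orthogonal $\varphi$, holomorphy of $L(s,\varphi,\mathrm{As}^-)$ at $s=1$ amounts to ``no two segments are linked, plus the right parity for self-dual segments''. But ``no two segments linked'' is the Bernstein--Zelevinsky condition, i.e.\ genericity for $\mathrm{GL}_n(E)$. It is equivalent to holomorphy at $s=1$ of $L(s,\varphi\otimes\varphi^\vee)=L(s,\varphi\otimes\varphi^c)=L(s,\varphi,\mathrm{As}^+)\,L(s,\varphi,\mathrm{As}^-)$. That is strictly stronger than condition 2, because poles carried by the $\mathrm{As}^+$ factor are permitted. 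Gan--Ichino's criterion only involves $\mathrm{Ad}_{U(n)}\circ\varphi_U\cong\mathrm{As}^-\circ BC_n(\varphi_U)$, so it cannot be combined with Bernstein--Zelevinsky the way you suggest.

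\textbf{A counterexample.} Take $n=2$ and $\psi=\vert\cdot\vert_E^{1/2}\oplus\vert\cdot\vert_E^{-1/2}$. This is conjugate-orthogonal, being a pair $\chi\oplus\chi^*$ with $\chi\not\cong\chi^*$.
\begin{itemize}
\item The only unlinked covering of its exponents is the single segment $[-\tfrac12,\tfrac12]$, i.e.\ $\mathbf 1\boxtimes\mathrm{Sym}^1$. This is conjugate-\emph{symplectic}, so it is excluded.
\item The correct $\varphi$ is $\psi$ itself with trivial $\mathrm{SL}_2(\mathbb{C})$-action. Its segments $[\tfrac12]$ and $[-\tfrac12]$ are linked.
\item Nevertheless $L(s,\varphi,\mathrm{As}^-)=L(s+1,\eta_{E/F})\,L(s-1,\eta_{E/F})\,\zeta_E(s)$ is holomorphic at $s=1$. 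The Rankin--Selberg pole sits in $L(s,\vert\cdot\vert_E^{-1/2},\mathrm{As}^+)=\zeta_F(s-1)$.
\end{itemize}
So the step you expected to be bookkeeping, namely that conjugate-orthogonality of $\psi$ forces the maximal unlinked self-dual segments to have the right sign, is false. Your uniqueness argument is also carried out in the wrong class of parameters: in this example it would conclude that no valid $\varphi$ exists.

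\textbf{What is missing.} You need the sharper criterion on conjugate-dual lines. For $\tilde\psi$ not conjugate-dual, your greedy maximal-segment argument is fine. This is exactly the paper's first lemma, applied on the line of $\tilde\psi$ and transported by $I\mapsto -I$ to the line of $\tilde\psi^*$.

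For $\tilde\psi\cong\tilde\psi^*$ you must actually compute two kinds of factors:
\begin{itemize}
\item the cross terms $L(s,\tilde\psi\otimes I\otimes\tilde\psi^c\otimes J)$;
\item the diagonal terms $L(s,\tilde\psi\otimes I,\mathrm{As}^-)=\prod_m L(\cdot,\tilde\psi,\mathrm{As}^{(-1)^{m+1}})$, with signs alternating according to $\mathrm{Sym}^2$ versus $\Lambda^2$.
\end{itemize}
The outcome is that two segments $I,J$ of $S_{\tilde\psi}$ may be linked precisely when $J=-I$, $I$ has multiplicity one, and $\mathrm{sgn}(\tilde\psi)\,\mathrm{sgn}(I\cup -I)=-1$.

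With this criterion the combinatorics is no longer Zelevinsky's ladder. Suppose the outermost symmetric segment $[-x,x]$ has the wrong sign and odd multiplicity $m$. Then one keeps $m-1$ copies and must break the last copy into the linked pair $[-x,z]$, $[-z,x]$, where $[-z,z]$ is maximal in what remains. When no such $z$ exists in the half-integral case, the last copy is broken into $[-x,-\tfrac12]$, $[\tfrac12,x]$ instead. A separate inductive argument is then needed to show this decomposition is forced. Your plan never allows linked pairs, so it cannot reach the correct $\varphi$ on self-dual lines.
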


\begin{proof}
    Let us recall that any irreducible representation of $W_E'$ is of the form $\Tilde{\psi}\boxtimes \mathrm{Sym}^{d-1}$, with $\Tilde{\psi}$ an irreducible representation of $W_E$ and $\mathrm{Sym}^{d-1}$ is the irreducible representation of dimension $d$ of $\mathrm{SL}_2(\mathbb{C})$. Hence, the semi-simplification is of the form
    $$\bigoplus_{i=0}^{d-1}\Tilde{\psi}\vert \cdot \vert^{\frac{d-(2i+1)}{2}}.$$
    
    Let $\psi: W_E \to \GL{n}(\mathbb{C})$ be a conjugate-orthogonal representation. Denote by $\mathrm{Irr}_u(W_E)$ the set of isomorphism classes of irreducible and unitary representations of $W_E$. Then $\psi$ can be decomposed in the form
    $$\psi=\bigoplus_{\psi'\in \mathrm{Irr}_u(W_E)}\bigoplus_{a\in E_{\psi'}} \psi'\vert \cdot \vert^ a,$$
    where each $E_{\psi'}\subset \mathbb{R}$ is a multiset. For $a, b\in \mathbb{R}$, such that $b-a\in \mathbb{N}$, we define the Weil-Deligne representation
    $$[a,b] := \vert \cdot\vert^{\frac{a+b}{2}}\boxtimes \mathrm{Sym}^{b-a},$$
    and from now on every segment $[a, b]= \{a, a+1, \dots, b\}$ is identified with this Weil-Deligne representation. We let $l([a,b])=b-a$ the \emph{length} of the segment $[a,b]$. When $a=-b$, the corresponding Weil-Deligne representation $ \mathrm{Sym}^{2b}$ is self-dual, of \emph{sign} $\mathrm{sgn}([-b,b])=(-1)^{2b}$.
    
    Let $\varphi$ be a semi-simple representation of $W_E'$. There exists a unique family $(S_{\Tilde{\psi}})_{\Tilde{\psi}\in \mathrm{Irr}_u(W_E)}$ of multisets of segments such that
    $$\varphi \cong \bigoplus_{\Tilde{\psi}\in \mathrm{Irr}_u(W_E)} \bigoplus_{I \in S_{\Tilde{\psi}}} \Tilde{\psi} \otimes I.$$
    Moreover $\varphi$ is conjugate-orthogonal (or $\mathrm{sgn}(\varphi)=1$), if and only if it satisfies the following conditions:  
    \begin{enumerate}
        \item[(i)] For every multiset $S_{\Tilde{\psi}}$, we have 
        $$S_{\Tilde{\psi}^*} = -S_{\Tilde{\psi}},$$
        where $-S_{\Tilde{\psi}}$ is the multiset $\{[-b, -a], \ [a,b]\in S_{\Tilde{\psi}}\}$.
        \item[(ii)] \begin{itemize}
            \item If ${\Tilde{\psi}}$ is conjugate-orthogonal, then any symmetric segment $I=-I$ such that $\mathrm{sgn}(I)=-1$ appears with even multiplicity in $S_{\Tilde{\psi}}$.
            \item If ${\Tilde{\psi}}$ is conjugate-symplectic, then any symmetric segment $I=-I$ such that $\mathrm{sgn}(I)=1$ appears with even multiplicity in $S_{\Tilde{\psi}}$.
        \end{itemize} 
    \end{enumerate}
    Moreover, we have the decomposition
    \begin{align*}
        L(s, \varphi, \mathrm{As}^-) &= \prod_{\Tilde{\psi}} \prod_{I \in S_{\Tilde{\psi}}} L(s,  \Tilde{\psi} \otimes I, \mathrm{As}^-) \prod_{\Tilde{\psi} \not \cong\Tilde{\psi}'}\prod_{I\in S_{\Tilde{\psi}}, \ J \in S_{\Tilde{\psi}'}} L(s, \Tilde{\psi}\otimes I \otimes (\Tilde{\psi}')^c\otimes J) \\
        & \times \prod_{\Tilde{\psi}}\prod_{I\in \underset{I \in S_{\Tilde{\psi}}, \ J \in S_{\Tilde{\psi}}\backslash \{I\}}{\{I,J\} \subset S_{\Tilde{\psi}}}} L(s, \Tilde{\psi}\otimes I_i \otimes \Tilde{\psi}^c\otimes I_j)
    \end{align*}
    
    From there, we see that $L(s, \varphi, \mathrm{As}^-)$ has a pole at $s=1$ if and only if there exist $\Tilde{\psi} \in \mathrm{Irr}_u(W_E)$ and $I\in S_{\Tilde{\psi}}$ such that $L(s, \Tilde{\psi}\otimes I, \mathrm{As}^-)$ has a pole at $s=1$, or there exist $\Tilde{\psi}, \Tilde{\psi}'\in \mathrm{Irr}_u(W_E)$ and $I\in S_{\Tilde{\psi}}, \ J \in S_{\Tilde{\psi}'}$ such that $L(s, \Tilde{\psi}\otimes I \otimes (\Tilde{\psi}')^c\otimes J)$ has a pole at $s=1$.
    
    Let $\Tilde{\psi}\in \mathrm{Irr}_u(W_E)$ and let $I=[a,b] \subset \mathbb{R}$. Denote $$\Tilde{\psi}_I= \Tilde{\psi} \otimes I = \Tilde{\psi} \vert  \cdot \vert ^{\frac{a+b}{2}}\boxtimes \mathrm{Sym}^{b-a},$$
    and $V_{\Tilde{\psi}}$, $V_{b-a}$, the vector space on which $W_E$ acts by $\Tilde{\psi} \vert\cdot \vert^{\frac{a+b}{2}}$ and $\mathrm{SL}_2(\mathbb{C})$ by $\mathrm{Sym}^{b-a}$ respectively.
    \begin{lem}
        For all $s \in \mathbb{C}$, all $\tilde{\psi}\in \mathrm{Irr}_u(W_E)$ and all segment $I=[a,b]\subset \mathbb{R}$, we have 
        $$L(s, \Tilde{\psi}\otimes I, \mathrm{As}^-) = \prod_{m=0}^{b-a} L(s+2b-m, \Tilde{\psi},\mathrm{As}^{(-1)^{m+1}}).$$
    \end{lem}
    
    \begin{proof}
    Recall that the representation $\mathrm{As}^-(\Tilde{\psi}_I)$ is realized on $V_{\Tilde{\psi}}\otimes V_{\Tilde{\psi}}\otimes V_{b-a}\otimes V_{b-a}$. For a certain $\sigma\in W_F\backslash W_E$,
    $$\mathrm{As}^-(\psi_I)(\sigma) = -((\mathrm{Id} \otimes \Tilde{\psi}(\sigma^2))\circ \iota_{V_{\Tilde{\psi}}}) \otimes \iota_{V_{b-a}}$$
    where $\iota_V(v_1\otimes v_2)=v_2\otimes v_1$, for $v_1,v_2\in V$. For all $\tau\in W_E$, $g\in \mathrm{SL}_2(\mathbb{C})$
    $$\mathrm{As}^-(\Tilde{\psi}_I)(\tau, g) = (\Tilde{\psi}(\tau)\vert\tau\vert^{\frac{a+b}{2}}\otimes \Tilde{\psi}(\sigma\tau\sigma^{-1})\vert\tau\vert^{\frac{a+b}{2}})\otimes(\mathrm{Sym}^{b-a}(g)\otimes \mathrm{Sym}^{b-a}(g)).$$
    We have $V_{b-a}\otimes V_{b-a}\cong V_{2b-2a}\oplus V_{2b-2a-2} \oplus \cdots \oplus V_0$ (see p.87 in \cite{BroDi}) and $\iota_{V_{b-a}}$ is an involutive self-intertwining operator of this $\mathrm{SL}_2(\mathbb{C)}$-representation, hence it acts by $\pm 1$ on each of the $V_{2i}$'s. More precisely, 
    \begin{lem}
        With the same notation, for all $i \in \{0, \dots, b-a\}$,
        $$\iota_{V_{b-a}}\vert_{V_{2i}} = (-1)^{b-a+i}\mathrm{Id}_{V_{2i}}.$$
    \end{lem}
    
    \begin{proof}
        $$\iota_{V_{b-a}} \in \mathrm{Hom}_{\mathrm{SL}_2(\mathbb{C})}\left(\bigoplus_{i=0}^{b-a}V_{2i}, \bigoplus_{i=0}^{b-a}V_{2i} \right).$$
    As $V_{2i}$ is irreducible for all $i\in \{0, \dots b-a\}$, we therefore have 
    $$\iota_{V_{b-a}} \in \bigoplus_{i=0}^{b-a}\mathrm{Hom}_{\mathrm{SL}_2(\mathbb{C})}(V_{2i}, V_{2i}),$$
    which means that $\iota_{V_{b-a}}\vert_{V_{2i}} = \lambda_i \mathrm{Id}_{V_{2i}}$ is a homothety such that $\lambda_i^2=1$ because 
    $$\iota_{V_{b-a}} \circ \iota_{V_{b-a}} = \mathrm{Id}_{V_{b-a}}.$$
    Recall that 
    $$V_{b-a} \otimes V_{b-a} = \mathrm{Sym}^2(V_{b-a}) \oplus \Lambda^2(V_{b-a}), $$
    where $\Lambda^2(V_{b-a})$ denotes the exterior square of $\Lambda^2(V_{b-a})$. By definition, $\iota_{V_{b-a}}$ acts by $+1$ on $\mathrm{Sym}^2(V_{b-a})$ and by $-1$ on $\Lambda^2(V_{b-a})$.
    Following \cite{FultonHarris}, 11.1, we show that for all integer $n$
    $$\mathrm{Sym}^2(V_{n}) = \bigoplus_{i=0}^{\lfloor n/2\rfloor} V_{2n-4i}.$$
    The weights of $V_{n}$ are $n, n-2, \dots, -n+2, -n$, hence the weights of $V_n \otimes V_n$ are of the form 
    $$\alpha +\beta , \ \text{with} \ \alpha, \beta \in \{n, n-2, \dots, -n+2, -n\},$$
    we can arrange them in a matrix as follows
    $$E := \begin{pmatrix}
        2n & 2n-2 & 2n-4 & \cdots & 0 \\
        2n-2 & 2n-4 & 2n-6 & \cdots & -2 \\
        2n-4 & 2n-6 & 2n-8 &  \cdots & -4 \\
        \vdots & \vdots &  & \ddots &  \vdots \\
        0 & -2 & -4 & \cdots & -2n
    \end{pmatrix}$$
    and remark that the weights of $\mathrm{Sym}^2(V_{b-a})$ are given by the upper triangular matrix
    $$\begin{pmatrix}
        2n & 2n-2 & 2n-4 & \cdots & 0 \\
        * & 2n-4 & 2n-6 & \cdots & -2 \\
        * & * & 2n-8 &  \cdots & -4 \\
        \vdots & \vdots &  & \ddots &  \vdots \\
        * & * & * & \cdots & -2n
    \end{pmatrix}.$$
    As the representations are entirely determined by their eigenvalues, we can identify the irreducible components of $\mathrm{Sym}^2(V_{b-a})$. Namely, the first line of $E$ together with the last column give 
    $$2n, 2n-2, \dots, -2n$$
    as eigenvalues, corresponding to $V_{2n}$. We then take the second line and the $(n-1)$-th column and obtain $V_{2n-4}$. We iterate this step until the $\lfloor n/2 \rfloor$-th line and column. This shows the formula 
    $$\mathrm{Sym}^2(V_{n}) \cong \bigoplus_{i=0}^{\lfloor n/2\rfloor} V_{2n-4i}.$$
    We deduce from that that for all $i \in \{0, \dots, b-a\}$,
    $$\iota_{V_{b-a}}\vert_{V_{2i}} = (-1)^{b-a+i}\mathrm{Id}_{V_{2i}}.$$
    
    \end{proof}

    Finally, we have 
    $$V_{\Tilde{\psi}}\otimes V_{\Tilde{\psi}}\otimes V_{b-a}\otimes V_{b-a} \cong \bigoplus_{m=0}^{b-a}(V_{\Tilde{\psi}}\otimes V_{\Tilde{\psi}})\otimes V_{2b-2a-2m}$$
    and 
    $$\mathrm{As}^-(\Tilde{\psi}_I) \cong \bigoplus_{m=0}^{b-a}\mathrm{As}^{(-1)^{m+1}}(\Tilde{\psi} \vert \cdot\vert^{\frac{a+b}{2}})\otimes \mathrm{Sym}^{2b-2a-2m}.$$
    In terms of segments, it becomes
    $$\mathrm{As}^-(\Tilde{\psi}_I) \cong \bigoplus_{m=0}^{b-a}\mathrm{As}^{(-1)^{m+1}}(\Tilde{\psi})\otimes [2a+m, 2b-m].$$
    Hence, 
    $$L(s, \Tilde{\psi}\otimes I, \mathrm{As}^-) = \prod_{m=0}^{b-a} L(s+2b-m, \Tilde{\psi},\mathrm{As}^{(-1)^{m+1}}).$$
    \end{proof}
    
    Using this formula, we obtain that $L(s, \Tilde{\psi}\otimes I, \mathrm{As}^-)$ has a pole at $s=1$ if and only if $-1-b \in \{a, a+1, \dots, b\}$ and $\Tilde{\psi}$ is $(-1)^{2b}$-conjugate-dual (with $(-1)$-conjugate-dual being conjugate-symplectic, and $1$ -conjugate-dual being conjugate-orthogonal). For the other L-factors, let $\Tilde{\psi}, \Tilde{\psi}'$ be irreducible and unitary Weil representations and $I=[a,b], \ J=[\alpha,\beta]$ two segments in $\mathbb{R}$. Then
    $$\Tilde{\psi} \otimes I \otimes (\Tilde{\psi'})^c \otimes J \cong \Tilde{\psi} \otimes (\Tilde{\psi'})^c \otimes \bigoplus_{m=0}^{\min(b-a, \beta-\alpha)}\left[a+\alpha+m, b+\beta-m\right].$$
    Therefore, 
    $$L(s,\Tilde{\psi} \otimes I \otimes (\Tilde{\psi'})^c \otimes J) = \prod_{m=0}^{\min(b-a, \beta-\alpha)} L(s+b+\beta-m,\Tilde{\psi} \otimes (\Tilde{\psi'})^c )$$
    has a pole at $s=1$ if and only if $ 1+b+\beta \in \{0, 1, \dots, \min( b-a, \beta-\alpha)\}$ and $\Tilde{\psi}^*\cong \Tilde{\psi'}$. Recall that two segments $[a, b]$ and $[\alpha, \beta]$ are said to be \emph{linked} if their union is still a segment and none of them is included in the other, when $a\leq \alpha$, i.e. $[a, b]$ is on the left of $[\alpha, \beta]$, we then write $[a, b]\prec[\alpha, \beta]$. With this notation, the condition 
    $$1+b+\beta \in \{0, 1, \dots, \min( b-a, \beta-\alpha)\}$$
    is equivalent to 
    $$I\prec -J, \ \ \mathrm{if} \ \  l(I)\leq l(J),$$
    and $J \prec -I$ otherwise.
    
    Let us reformulate our proposition in terms of segments. We know that
    $$\psi=\bigoplus_{\Tilde{\psi}\in \mathrm{Irr}_u({W_E})}\bigoplus_{a\in E_{\Tilde{\psi}}} \Tilde{\psi}\vert \cdot \vert^ a.$$
    We want to show that there exists a unique partition $S_{\Tilde{\psi}}$ of $E_{\Tilde{\psi}}$ into segments, for all $\Tilde{\psi}$ such that
    $$\varphi = \bigoplus_{\Tilde{\psi}\in \mathrm{Irr}_u({W_E})}\bigoplus_{I\in S_{\Tilde{\psi}}} \Tilde{\psi} \otimes I$$
    satisfies the following conditions.
    \begin{enumerate}
        \item $\varphi^{ss}=\psi$, meaning that for all $\Tilde{\psi}\in \mathrm{Irr}_u({W_E})$, $S_{\Tilde{\psi}}$ is a partition of $E_{\Tilde{\psi}}$, in other words $\underset{I\in S_{\Tilde{\psi}}}{\bigsqcup}I = E_{\Tilde{\psi}}.$
        \item $\varphi$ is conjugate-orthogonal, that is:
        \begin{enumerate}
            \item For all $\Tilde{\psi}\in \mathrm{Irr}_u({W_E})$, $S_{\Tilde{\psi}^*}=-S_{\Tilde{\psi}}$, where $-S_{\Tilde{\psi}}$ is the multiset $\{[-b, -a], \ [a,b]\in S_{\Tilde{\psi}}\}$.
            \item For all $\Tilde{\psi}\in \mathrm{Irr}_u({W_E})$ and all $I\in S_{\Tilde{\psi}}$, if $\Tilde{\psi}^*\cong\Tilde{\psi}$, $I=-I$ and $\mathrm{sgn}(\Tilde{\psi})\mathrm{sgn}(I)=-1$, then $I$ appears with even multiplicity in $S_{\Tilde{\psi}}$.
        \end{enumerate}
        \item For all $\Tilde{\psi}\in \mathrm{Irr}_u({W_E})$, all $I\in S_{\Tilde{\psi}}$ and all $J\in S_{\Tilde{\psi}^*}$, if $l(I)\leq l(J)$ and $I\prec -J$, then $\Tilde{\psi}\cong \Tilde{\psi}^*$, $I=J$ and $I$ appears with multiplicity one in $S_{\Tilde{\psi}}$.
        \item For all $\Tilde{\psi}\in \mathrm{Irr}_u({W_E})$ and all $I\in S_{\Tilde{\psi}}$, if $\Tilde{\psi} \cong \Tilde{\psi}^*$ and $I\prec -I$ then $\mathrm{sgn(\Tilde{\psi})}\mathrm{sgn}(I\cup-I)=-1$.
    \end{enumerate}
    Assuming the first conditions, conditions 3 and 4 are equivalent to : for all $\Tilde{\psi}$, and all $I,J\in S_{\Tilde{\psi}}$, if $I$ and $J$ are linked then  $\Tilde{\psi}^*\cong\Tilde{\psi}$, $J=-I$, $I$ appears with multiplicity one in $S_{\Tilde{\psi}}$ and $\mathrm{sgn(\Tilde{\psi})}\mathrm{sgn}(I\cup-I)=-1$.

    \begin{lem}[Decomposition in maximal segments]
        Let $E \subset \mathbb{R}$ be a finite multiset. Then, there exists a unique partition of $E$ into segments, 
        $$E = I_1\cup \cdots \cup I_d,$$
        such that none of the segments $I_i$ are linked.
            
    \end{lem}
    
    \begin{proof}
        Let $x=\max(E)$, and take $y$ such that $[y, x] \subset E $ is maximal for this property. Set $E'=E\backslash [y, x]$, by induction $E'$ admits a unique partition $E'=I'_1\cup \cdots \cup I'_d$ such that none of the segments are linked. As $E=E'\cup [y, x]$, we have to show that for all $i\in \{1, \dots, d\}$, $I'_i$ is not linked to $[y, x]$. Suppose that there exists $i\in \{1, \dots, d\}$, such that $I'_i$ is linked to $[y, x]$, therefore there exists $z\in I'_i$, such that $z<y$ and $[z, x]\subset E$. But then $[z, x]$ is longer than $[y, x]$, which is not possible by assumption on $y$. Hence, the partition $E=I'_1\cup \cdots \cup I'_d\cup [y, x]$ works. To show the uniqueness of this decomposition, suppose that $E=I_1\cup \cdots\cup I_d$ is such that none of the $I_i$'s are linked and for all $i\in\{1, \dots, d\}$, $I_i \not = [y, x]$. Hence, there exist $i\not = j$ such that $y\in I_i$ and $x\in I_j$. As $E$ contains all the integers between $y$ and $x$, there exists $n\in \mathbb{N}$ such that $y+n \in I_i$ and $y+n+1\not \in I_i$ that is $y+n+1\in I_k$ with $k\not = i$. Then the segments $I_i$ and $I_k$ are linked, which contradicts the assumption on the $I_i$'s.
    \end{proof} 

    If $E_{\Tilde{\psi}}$ is the underlying multiset of a non conjugate-dual component $\Tilde{\psi}$, we apply the lemma to $E_{\Tilde{\psi}}$ and denote $S_{\Tilde{\psi}}$ the multiset of segments $I_i$ such that 
    $$E_{\Tilde{\psi}}=I_1\cup \cdots\cup I_d,$$
    where any two segments $I_i$, $I_j$ of $E_{\Tilde{\psi}}$ are not linked. We construct
    $$\varphi_{\Tilde{\psi}} = \bigoplus_{I\in S_{\Tilde{\psi}}} \Tilde{\psi}\otimes I,$$
    a component of $\varphi$ corresponding to $\Tilde{\psi}$. By assumption, the direct sum 
    $$\bigoplus_{a\in -E_{\Tilde{\psi}}} \Tilde{\psi}^*\vert \cdot \vert^a$$
    must appear in the decomposition of $\psi$, hence
    $$\varphi_{\Tilde{\psi}}^* =\varphi_{\Tilde{\psi}^*}= \bigoplus_{I\in S_{\Tilde{\psi}}} \Tilde{\psi}^{*} \otimes (-I)$$
    also appears in $\varphi$. In that case, the only L-factor that can have a pole at $s=1$ is 
    $$L(s, \Tilde{\psi}\otimes I \otimes \Tilde{\psi}^\vee \otimes -J )$$
    with $I,J\in S_{\Tilde{\psi}}$. This L-factor has a pole if and only if $I$ and $J$ are linked, but $J\in S_{\Tilde{\psi}}$ and the decomposition in maximal segments implies that $I$ and $J$ are not linked.

    \begin{lem}
        Let $\Tilde{\psi}\in \mathrm{Irr}_u(W_E)$ be conjugate-dual and let $E=E_{\Tilde{\psi}} \subset \mathbb{R}$ be the corresponding multiset. Then, there exists a unique partition $S_{\Tilde{\psi}}$ of $E$ into segments such that:
        \begin{enumerate}
            \item $S_{\Tilde{\psi}}=-S_{\Tilde{\psi}}$, 
            \item For all $I\in S_{\Tilde{\psi}}$, if $I=-I$ and $\mathrm{sgn}(\Tilde{\psi})\mathrm{sgn}(I)=-1$, then $I$ appears with even multiplicity in $S_{\Tilde{\psi}}$.
            \item For all $I,J\in S_{\Tilde{\psi}}$, if $I$ and $J$ are linked then $J=-I$, $I$ appears with multiplicity one in $S_{\Tilde{\psi}}$ and $\mathrm{sgn(\Tilde{\psi})}\mathrm{sgn}(I\cup-I)=-1$.           
        \end{enumerate}
    
    \end{lem}

    \begin{proof}
        Let $\Tilde{\psi}\in \mathrm{Irr}_u(W_E)$ be conjugate-dual, that is $\Tilde{\psi}\cong \Tilde{\psi}^*$ and therefore $E=-E$. We prove the existence and unicity of such a partition of $E$ by induction. Let $x=\max(E)$, and take $y\in E$ such that $[ y, x ] \subset E$ and $[ y,x ] $ is of maximal length. Then, there are two possibilities, either $y\not = -x$ or $y =-x$.
        \begin{itemize}
            \item  If $y\not = -x$, then by symmetry $[-x, -y]\subset E$ and the segments $[-x, -y]$ and $[ y, x ]$ are not linked. Indeed, if they were linked, then $[ -x, -y ] \cup [ y, x ] =[ -x, -x ] \subset E$, which is impossible by hypothesis. Set $E' = E\backslash ([ -x, -y ]\cup [ y, x ])$, we have $E'=-E'$. Suppose by induction that $E'$ admits a unique decomposition $E'=I_1\cup \cdots \cup I_d$ satisfying all the properties of the lemma, and denote by $S'$ this partition. Then $S = S'\cup \{[ -x, -y ], [ y, x ]\} $ is a partition of $E$, it follows directly that the first property is preserved for $S$. The second property is given by the fact that all the symmetric segments of the partition $S$ are in $S'$. It remains to show that $[ y, x ]$ is not linked to a $I_i$, for any $i\in\{1, \dots, d\}$ (it will imply also that $[ -x,-y]$ is not linked to any other segment in the decomposition). Assume that there exists $i\in \{1, \dots d\}$ such that $I_i$ is linked to $[ y,x]$, then $y-1\in I_i$ because $x=\max(E)$. But then $[ y-1, x] \subset E$ and its length is superior to the length of $[ y, x]$, which is not possible. Hence property 3 is also satisfied, because the only linked segments involved in the decomposition of $E$ are the ones inherited from $E'$. 
            
            Let us now prove the unicity. Suppose we have a partition $S_{\tilde{\psi}}$ of $E$ satisfying conditions 1, 2 and 3. To prove the unicity of $S_{\tilde{\psi}}$, it suffices to show, by induction again, that $ [x,y] \in S_{\tilde{\psi}}$. Let $I\in S_{\tilde{\psi}}$ containing $x$ and of maximal length. We have $\max I= x$ by assumption, and we can assume that $y = \min I-1$. As $[y,x] \subset E$, there exists $J\in S_{\Tilde{\psi}}$, such that $y\in J$, meaning that $I$ and $J$ are linked. By property 3, $J=-I$ so $-x \in J$ and the union of $I$ and $J$ is $[ -x, x ]$, which is not possible by assumption on $y$. That proves that $I=[ y,x]$, therefore the decomposition is unique.

            \item Assume now that $y=-x$, and $\Tilde{\psi}$ is conjugate-symplectic, i.e. $\mathrm{sgn}(\tilde{\psi})=-1$ which implies that $0$ has even multiplicity in $E$. We denote $m>0$ the largest integer such that $E$ contains all the elements of $[ -x, x ]$ with multiplicity at least $m$, and define $m[ -x,x] = [ -x,x] \sqcup \cdots \sqcup [ -x,x] $. 
            
            We have $\mathrm{sgn}[ -x,x]=1$, unless $x\in \frac{1}{2}\mathbb{Z}\backslash \mathbb{Z}$. Suppose $x\in \mathbb{Z}$, then $0\in[ -x,x]$. If the multiplicity $m$ of $[ -x,x]$ is odd, then there exists $z < x$ such that 
            $$[ -z,z]\subset E\backslash(m[ -x,x]),$$
            because $0$ has even multiplicity in $E$, and we take $z$ maximal for this property. Now set 
            $$E'=E\backslash ((m-1)[ -x,x]\sqcup[ -x,z]\sqcup [ -z,x]).$$
            We have $E'=-E'$ and the multiplicity of $0$ is still even. Suppose by induction, that $E'$ admits a unique decomposition $E'=I_1\sqcup \cdots \sqcup I_d$ satisfying all the properties of the lemma. Then the partition of $E$
            $$S_{\Tilde{\psi}}=\{I_1, \dots, I_d,(m-1)[ -x,x],[ -x,z],[ -z,x]\}$$
            automatically verifies properties 1 and 2. Note that if there exists $i\in \{1, \dots, d\}$ such that $I_i$ is linked to $[ -z, x]$ then either there exists an element $t\in I_i$ such that $t>x$ which is impossible, or there exists $t\in I_i$ such that $t<-z$ but in that case $-t>z$ which means that the segment $[ -t, t] \subset E\backslash(m[-x,x])$ is longer than $[ -z, z]$ and it is also impossible. If there exists $k\in \{1, \dots, d\}$ such that $I_k=[ -z,x]$ then $-I_k=[ -x,z]$ also appears but then $(m+1)[ -x,x]\subset E$ which is not possible by definition of $m$. This implies in particular that the third condition is still verified. Hence, the partition $S_{\Tilde{\psi}}$ satisfies all the conditions. 
            
            If $\mathrm{sgn}([ -x,x])=-1$, i.e. $x\in \frac{1}{2}\mathbb{Z}\backslash \mathbb{Z}$, then $0\not \in [ -x,x]$. We set 
            $$E'= E\backslash[ -x, x ]$$
            and decompose it as 
            $$E'=I_1\cup \cdots \cup I_d$$
            satisfying all properties by induction. As $[-x,x]$ contains all the $I_i$ by assumption, it follows directly that $E = I_1\cup \cdots \cup I_d \cup [-x,x]$ is a partition satisfying our conditions. 

            It remains to show that the partition $S_{\tilde{\psi}}$ given by the above algorithm is the unique one satisfying conditions 1, 2 and 3. Suppose that $E = I_1\cup \cdots \cup I_d$ is another decomposition such that properties 1, 2 and 3 are satisfied. Let $I_i = [ y, x]$ be a segment containing $x$. We show that $I_i\in S_{\tilde{\psi}}$, i.e. that either  $y=-x$ and the multiplicity $m_i$ of $I_i$ is even, or $y=-z$, where $[-z,z]$ is maximal in $E\backslash (m[-x,x])$, with $m_i=1$. Recall that here, we have $[-x,x]\subset E$. In that case, either $I_i =-I_i$, or $I_i$ and $-I_i$ are linked because there exists a segment $I_k$ containing $y-1$, hence linked to $I_i$ and property 3 says that $I_k=-I_i$. Thanks to the second property, if $I_i=-I_i$, i.e. $y=-x$, then $m_i$ is even. If $I_i$ is linked to $-I_i$ then by the definition of $z$ there exists a segment $I_k$ containing $z$, and $-z\leq y \leq z<x$, forcing $I_k$ to be linked with $I_i$. Using the third property we have $I_k=-I_i$, hence $-z=y$, and $m_i=1$. Therefore, $I_i\in S_{\tilde{\psi}}$, the partition $S_{\Tilde{\psi}}$ is unique.

            \item Assume that $y=-x$, and $\tilde{\psi}$ is conjugate-orthogonal, i.e. $\mathrm{sgn}(\tilde{\psi})=1$. 
            
            If $\mathrm{sgn}[-x,x]=-1$, then $x\in \frac{1}{2}\mathbb{Z}\backslash\mathbb{Z}$. Suppose $[-x,x]$ appears with odd multiplicity $m$. If there exists $z \in \frac{1}{2}\mathbb{Z}\backslash \mathbb{Z}, \  z<x$ such that 
            $$[ -z, z] \subset E\backslash m[-x, x],$$ 
            we take $z$ maximal for this property. Then we set 
            $$E' = E\backslash ((m-1)[ -x,x] \cup [ -x,z]\cup [ -z,x])$$
            and suppose by induction that $E'$ admits a partition satisfying properties 1, 2 and 3, $E' = I_1 \cup\cdots \cup I_d $. We have 
            $$E= I_1 \cup\cdots \cup I_d \cup (m-1)[ -x,x] \cup [ -x,z]\cup [ -z,x]$$
            satisfying automatically properties 1 and 2. If there exists $i\in \{1, \dots, d\}$ such that $I_i$ is linked to $[ -z, x]$ then either there exists an element $t\in I_i$ such that $t>x$ which is impossible, or there exists $t\in I_i$ such that $t<-z$ but in that case $-t>z$ which means that the segment $[ -t, t] \subset E$ is longer than $[ -z, z]$ and it is also impossible. If, there exists $k\in \{1, \dots, d\}$ such that $I_k=[ -z,x]$ then $-I_k=[ -x,z]$ also appears but then $(m+1)[ -x,x]\subset E$ which is not possible by definition of $m$. This implies in particular that the third condition is still verified. Hence, the partition $$S_{\Tilde{\psi}}=\{I_1,\dots, I_d, (m-1)[ -x,x],[ -x,z], [ -z,x]\}$$
            satisfies all the conditions. If such an element $z$ does not exists in $E$, then we set 
            $$E'=E\backslash ((m-1)[ -x,x]\cup[ -x,-\frac{1}{2}]\cup [ \frac{1}{2},x]).$$ 
            The induction works in the same way.

             If $[ -x,x]$ is of odd length, then we set $E'= E\backslash[ -x, x ]$ and decompose it as $E'=I_1\cup \cdots \cup I_d$ satisfying properties 1, 2, and 3, by induction. As $[-x,x]$ contains all the $I_i$ by assumption, it follows directly that $E = I_1\cup \cdots \cup I_d \cup [-x,x]$ is a partition satisfying our conditions. 

             The proof of the unicity of the partition is the same as in the case of $\Tilde{\psi}$ conjugate-symplectic.

        \end{itemize}
    
        \end{proof}
    Once we have these partitions $S_{\Tilde{\psi}}$ for all $\Tilde{\psi}\in \mathrm{Irr}_u(W_E)$ appearing in the decomposition of $\psi$, denote $\mathcal{C}$ the set of $\Tilde{\psi}$ that are conjugate-dual, and $\overline{\mathcal{C}}$ the set of those that are not conjugate-dual. We set 

    $$\varphi = \bigoplus_{\Tilde{\psi}\in \mathcal{C}} \bigoplus_{I \in S_{\Tilde{\psi}}} \Tilde{\psi} \otimes I\bigoplus_{\Tilde{\psi} \in \overline{\mathcal{C}}}\bigoplus_{J\in S_{\Tilde{\psi}}} \Tilde{\psi}\otimes J.$$
    As $\psi$ is conjugate-orthogonal, for all $\Tilde{\psi} \in \overline{\mathcal{C}}$, we also have $\Tilde{\psi}^* \in \overline{\mathcal{C}}$ and $S_{\tilde{\psi}^*}=-S_{\Tilde{\psi}}$, this implies that 
    $$\bigoplus_{\Tilde{\psi} \in \overline{\mathcal{C}}}\bigoplus_{J\in S_{\Tilde{\psi}}} \Tilde{\psi}\otimes J$$
    is conjugate-orthogonal. Moreover, for all $\Tilde{\psi}\in \mathcal{C}$, conditions 1 and 2 of the above lemma ensure that
    $$ \bigoplus_{I \in S_{\Tilde{\psi}}} \Tilde{\psi} \otimes I$$
    is conjugate-orthogonal. Therefore $\varphi$ is indeed conjugate-orthogonal. The third condition of lemma is equivalent to the fact that for all $\Tilde{\psi} \in \mathcal{C}$ 
    $$L(s, \bigoplus_{I \in S_{\Tilde{\psi}}} \Tilde{\psi} \otimes I, \mathrm{As}^-),  \ \mathrm{and} \ L(s, \Tilde{\psi}\otimes I \otimes \Tilde{\psi}^\vee \otimes J )$$
    with $I,J\in S_{\Tilde{\psi}}$, have no pole at $s=1$. We have already seen that for all $\Tilde{\psi}\in \overline{\mathcal{C}}$, the only L-factor that can have a pole is
    $$L(1, \Tilde{\psi}\otimes I \otimes \Tilde{\psi}^\vee \otimes -J )$$
    with $I,J\in S_{\Tilde{\psi}}$. This does not happen because of the partition in maximal segments. Finally, this Langlands' parameter $\varphi$ is unique thanks to the uniqueness of $S_{\Tilde{\psi}}$ for all $\Tilde{\psi}$.
    
\end{proof}

\begin{prop}\label{closedim}
    The map $CBC: \Omega_{\mathrm{gen}}(U(n)) \to  \Omega(G_n(E))$ is a closed immersion.
\end{prop}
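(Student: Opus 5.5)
We have to show that $CBC$ is a closed immersion of algebraic varieties. The plan is to work one Bernstein component at a time: show that each component of $\Omega_{\mathrm{gen}}(U(n))$ is mapped onto a closed subvariety of a single component of $\Omega(G_n(E))$, that the map there is a closed immersion, and that distinct components have disjoint images. Injectivity is already available, since the discussion preceding Proposition~\ref{propcuspBC} reduces it to the injectivity of $\Omega_{\mathrm{gen}}(U(n))\to\mathcal{B}(U(n))$, and that proposition gives a bijection.

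\textbf{Step 1: components.} Fix a component $\Omega_\sigma$ of $\Omega(U(n))$ with $M=G_{n_1}(E)\times\cdots\times G_{n_k}(E)\times U(m)$ and $\sigma=\sigma_1\boxtimes\cdots\boxtimes\sigma_k\boxtimes\sigma_u$. The unramified characters of $M$ are $\chi=(|\det|_E^{s_1},\dots,|\det|_E^{s_k})$, since $U(m)$ has compact center modulo nothing split. Using $BC(\tau\rtimes\sigma_u)=\tau\times\tau^*\times BC(\sigma_u)$ and $(\sigma_i|\det|^{s_i})^*=\sigma_i^*|\det|^{-\bar s_i}$, I compute, holomorphically in the parameters,
\[
CBC_n([M,\sigma\otimes\chi])=[\mathcal{M},\ \sigma_1|\det|^{s_1}\boxtimes\sigma_1^*|\det|^{-s_1}\boxtimes\cdots\boxtimes\tau].
\]
Here $\tau$ is the fixed cuspidal support of $BC(\sigma_u)$, which does not move. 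So $CBC_n$ is induced by the morphism of tori
\[
j:\mathcal{X}(M)\cong(\mathbb{C}^\times)^k\to\mathcal{X}(\mathcal{M}),\qquad z_i\mapsto(z_i,z_i^{-1})\ \text{on the $i$-th pair, trivial on the $L$-part}.
\]
This is a homomorphism of tori with closed image, a subtorus. Composing with $\mathcal{X}(\mathcal{M})\to\mathcal{X}(\mathcal{M})/W(\mathcal{M},CBC(\sigma))$ gives a morphism $\Omega_\sigma\to\Omega_{CBC(\sigma)}$. It is finite, because it is a composition of a closed immersion of tori with a finite quotient. It is equivariant for the map $W(M,\sigma)\to W(\mathcal{M},CBC(\sigma))$ induced by $W_{U(n)}(M)\hookrightarrow W_{G_n(E)}(\mathcal{M})$, where the sign changes $\sigma_i\mapsto\sigma_i^*$ become swaps of the two factors of a pair.

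\textbf{Step 2: from injective finite to closed immersion.} Restricted to the generic locus, which is a union of components (genericity is preserved by unramified twist), the map is an injective finite morphism, hence a homeomorphism onto a closed subset. Since $\Omega_{CBC(\sigma)}$ is normal as a torus quotient, it remains to show that the map is unramified, that is, injective on tangent spaces. Equivalently, the ring map $\mathbb{C}[\mathcal{X}(\mathcal{M})]^{W'}\to\mathbb{C}[\mathcal{X}(M)]^{W(M,\sigma)}$ must be surjective.

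I would attempt this by exhibiting generators. The invariants of $W(M,\sigma)$ are generated by symmetric functions in the $z_i^{\pm1}$ over blocks of equal $\sigma_i$ up to conjugate-duality. Each such function is the restriction of a symmetric function in the pair coordinates $(z_i,z_i^{-1})$, and that symmetric function is $W'$-invariant. I expect this surjectivity to be the \textbf{main obstacle}, together with the comparison of stabilizers. The key fact needed is that $W(\mathcal{M},CBC(\sigma))$, restricted to $j(\mathcal{X}(M))$, induces no more identifications than $W(M,\sigma)$. I would derive this from injectivity at the level of parameters, namely the uniqueness in Proposition~\ref{propcuspBC}, applied at a generic point of the component. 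I would then handle fixed points by the normality argument: an injective finite birational morphism onto a normal variety is an isomorphism onto its image.

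\textbf{Step 3: gluing.} Distinct components of $\Omega_{\mathrm{gen}}(U(n))$ map into disjoint closed subsets. If two images met, global injectivity would be violated at the common point. Each image is closed in a single component of $\Omega(G_n(E))$, so the union over components is a closed immersion.
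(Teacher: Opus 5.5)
Your route is essentially the paper's: reduce to one Bernstein component using the injectivity from Proposition~\ref{propcuspBC}, note that $CBC$ comes from the torus map $z_i\mapsto(z_i,z_i^{-1})$, and prove that the restriction map $\mathbb{C}[\Omega_{CBC(\sigma)}]\to\mathbb{C}[\Omega_\sigma]$ is surjective block by block. The paper first normalizes $\sigma$ so that $\Omega_\sigma$ splits into blocks $(\mathbb{C}^\times)^m/(S_m\ltimes(\mathbb{Z}/2\mathbb{Z})^m)$ for $\nu\cong\nu^*$ and $(\mathbb{C}^\times)^m/S_m$ otherwise. In the self-dual case it identifies the map with $Q(T)\mapsto T^mQ(T+T^{-1})=\prod_i(T-x_i)(T-x_i^{-1})$, and then appends the fixed coordinates coming from $BC(\sigma_u)$.

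The step you call the ``main obstacle'' is this short computation, and you should just carry it out. The $S_m\ltimes(\mathbb{Z}/2\mathbb{Z})^m$-invariants of $\mathbb{C}[x_1^{\pm1},\dots,x_m^{\pm1}]$ are generated by $\sum_i(x_i^k+x_i^{-k})$ for $1\le k\le m$. These are triangular in the power sums of $y_i=x_i+x_i^{-1}$. Each of them is the restriction of the power sum $p_k$ on the target block; the $\alpha_\nu$ coordinates fixed by $BC(\sigma_u)$ only add the constant $\alpha_\nu$. The non-self-dual blocks are immediate, because the first factor maps identically. This gives surjectivity on the whole component, including at the points $x_i=\pm1$ fixed by the sign changes, so no separate treatment of fixed points is needed.

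You should drop the normality detour, which is both unnecessary and, as stated, wrong. Normality of the ambient $\Omega_{CBC(\sigma)}$ says nothing about the image, and ``finite, injective, birational onto its image'' with a normal source does not give an isomorphism. For example, $t\mapsto(t^2,t^3)$ from $\mathbb{A}^1$ into $\mathbb{A}^2$ has these properties and is not a closed immersion. Had you relied on that argument for the fixed points, the proof would break exactly there; the explicit generator computation is what proves the statement. A minor slip: $(\sigma_i|\det|^{s_i})^*=\sigma_i^*|\det|^{-s_i}$, with no complex conjugate on $s_i$, since ${}^*$ is the smooth contragredient composed with $c$.
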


\begin{proof}
    First of all, note that by the previous proposition, $CBC$ is injective, hence the images of two different connected components in $\Omega_{\mathrm{gen}}(U(n))$ don't intersect. Therefore, it suffices to show that the map from a connected component $\Omega\subset \Omega_{\mathrm{gen}}(U(n))$ to the corresponding component $\Omega' \subset \Omega(G_n(E))$ is a closed immersion.
    Let $[M, \sigma]$ be an element of $\Omega_{\mathrm{gen}}(U(n))$, with

    $$M=G_{n_1}\times \cdots \times G_{n_k} \times U(m),$$
    and we decompose 

    $$\sigma = \sigma_1 \boxtimes \cdots \boxtimes \sigma_k \boxtimes \sigma_u.$$
    We choose a pair $(L,\rho)$ as a representative of $\mathbf{Cs}(BC(\sigma_u))$, with $L$ a Levi subgroup of $G_m$, and $\rho$ an irreducible and supercuspidal representation of $L$, such that
    $$\mathcal{M}=G_{n_1}^2\times \cdots \times G_{n_k}^2 \times L, \ \mathrm{and} \ \tau = \sigma_1 \boxtimes \sigma_1^* \cdots \boxtimes \sigma_k \boxtimes \sigma_k^* \boxtimes \rho,$$
    is a representative of the image of $[M, \sigma]$ through $CBC$. If we set
    $$W(M, \sigma)=\{w \in W_G(M) : \ \exists \chi \in \mathcal{X}(M), \ \sigma^w = \sigma \otimes \chi \}$$
    the connected component $\Omega_\sigma$ can then be described as a quotient of a complex torus by a finite group
    $$\Omega_\sigma = \mathcal{X}(M)\otimes \sigma/W(M, \sigma).$$
    By adapting those definition to $G_n(E)$, we also get 
    $$\Omega_\tau =  \mathcal{X}(\mathcal{M})\otimes \tau/W(\mathcal{M}, \tau).$$
    Consider the following commuting diagram 

    $$\xymatrix{   \Omega_\sigma \ar[r]  & \Omega_\tau \\
    \mathcal{X}(M)\otimes \sigma \ar[u] \ar[r] & \mathcal{X}(\mathcal{M})\otimes \tau \ar[u]
    }$$
    where $\mathcal{X}(M)\otimes \sigma$ (resp. $\mathcal{X}(\mathcal{M})\otimes \tau$) is the orbit of $\sigma$ (resp. $\tau$) under the action of $\mathcal{X}(M)$ (resp. $\mathcal{X}(\mathcal{M})$), and the left and right maps are the canonical projections. The evaluation at a uniformizer of $E$ allows us to identify $\mathcal{X}(M)$ with $(\mathbb{C}^\times)^{k}$ and $\mathcal{X(M)}$ with $(\mathbb{C}^\times)^{2k+r}$. Therefore, the base-change map gives rise to the bottom map of the diagram above, defined by

    $$\begin{array}{cccc}
         & \mathcal{X}(M) \otimes \sigma & \longrightarrow & \mathcal{X}(\mathcal{M})\otimes \tau  \\
         & (\chi_1, \dots, \chi_k)\otimes \sigma & \longmapsto & (\chi_1, \chi_1^{-1} \dots, \chi_k,\chi_k^{-1}, 1, \dots ,1)\otimes \tau
    \end{array}$$
    
    Let us describe the corresponding map between $\Omega_\sigma$ and $\Omega_\tau$. Denote by $\Sigma$ the set of isomorphism classes of the $\sigma_i$'s and for all $\nu \in \Sigma$

    $$m_\sigma(\nu):= \vert \{1\leq i \leq k, \nu \cong \sigma_i \}\vert.$$
    Up to twisting $\sigma$ by an unramified character, we can assume that for all $i\in\{1, \dots, k\}$ either $\sigma_i\cong \sigma_i^*$, or $\sigma_i^* \not \in \mathcal{X}(G_{n_i})\otimes \sigma_i$, and that for all $i,j \in \{1, \dots, k\}$ either $\sigma_i \cong \sigma_j$ or $\sigma_i, \sigma_i^*\not \in \mathcal{X}(G_{n_j})\otimes \sigma_j$. Then we have 

    $$\Omega_\sigma \cong \prod_{\underset{\nu \cong \nu^*}{\nu \in \Sigma}} (\mathbb{C}^\times)^{m_\sigma(\nu)}/ (S_{m_\sigma(\nu)}\ltimes (\mathbb{Z}/2\mathbb{Z})^{m_\sigma(\nu)}) \times \prod_{\underset{\nu \not\cong \nu^*}{\nu \in \Sigma}} (\mathbb{C}^\times)^{m_\sigma(\nu)}/ S_{m_\sigma(\nu)},$$
    where for all integers $l$, $S_{l}$ is the permutation group that acts on $(\mathbb{C^\times})^l$ by 
    $$s \cdot (x_1, \dots, x_l) = (x_{s(1)}, \dots, x_{s(l)}),$$
    and the action of $(\mathbb{Z}/2\mathbb{Z})^{l}$ on $(\mathbb{C^\times})^l$ is given by the action of $\mathbb{Z}/2\mathbb{Z}$ on $\mathbb{C}^\times$, namely
    $$\overline{1}\cdot x = x^{-1}, \ x\in \mathbb{C}^\times$$
    $\overline{1}$ being the non-trivial element in $\mathbb{Z}/2\mathbb{Z}$.
    
    We can do the same in $G_n(E)$. If $\tau= \tau_1 \boxtimes \cdots \boxtimes \tau_{k'}$, with $k'=2k+r$, and $\Sigma'$ the set of isomorphism classes of the $\tau_i$'s. Again, up to twisting $\tau$ by an unramified character, we can assume that for all $i,j\in\{1, \dots, k'\}$ either $\tau_i\cong \tau_j$, or $\tau_i \not \in \mathcal{X}(G_{n_j})\otimes \sigma_j$. We get 

    $$\Omega_\tau \cong \prod_{\nu' \in \Sigma'} (\mathbb{C}^\times)^{m_\tau(\nu')}/ S_{m_\tau(\nu')}.$$
    In our case, we have $\tau = CBC(\sigma)$, hence $\Sigma \subseteq \Sigma'$, and we want to show that the corresponding map 
    $$\prod_{\underset{\nu \cong \nu^*}{\nu \in \Sigma}} (\mathbb{C}^\times)^{m_\sigma(\nu)}/ (S_{m_\sigma(\nu)}\ltimes (\mathbb{Z}/2\mathbb{Z})^{m_\sigma(\nu)}) \times \prod_{\underset{\nu \not\cong \nu^*}{\nu \in \Sigma}} (\mathbb{C}^\times)^{m_\sigma(\nu)}/ S_{m_\sigma(\nu)} \to  \prod_{\nu' \in \Sigma'} (\mathbb{C}^\times)^{m_\tau(\nu')}/ S_{m_\tau(\nu')}$$
    is a closed immersion. On one hand, a block of the form 
    
    $$(\mathbb{C}^\times)^{m_\sigma(\nu)}/ S_{m_\sigma(\nu)}$$
    is sent to a block of the form 
    
    $$(\mathbb{C}^\times)^{m_\sigma(\nu)}/ S_{m_\sigma(\nu)} \times (\mathbb{C}^\times)^{m_\sigma(\nu)}/ S_{m_\sigma(\nu)},$$
    through the closed immersion $(x_1, \dots,x_{m_\sigma(\nu)}) \mapsto (x_1, \dots,x_{m_\sigma(\nu)},x_1^{-1}, \dots,x_{m_\sigma(\nu)}^{-1})$.
    On the other hand, a block of the form 
    
    $$(\mathbb{C}^\times)^{m_\sigma(\nu)}/ (S_{m_\sigma(\nu)}\ltimes (\mathbb{Z}/2\mathbb{Z})^{m_\sigma(\nu)})$$
    is sent to a block of the form 
    $$(\mathbb{C}^\times)^{2m_\sigma(\nu)+\alpha_\nu}/ S_{2m_\sigma(\nu)+\alpha_\nu}$$
    through the map $(x_1, \dots,x_{m_\sigma(\nu)}) \mapsto (x_1,x_1^{-1} \dots,x_{m_\sigma(\nu)}, x_{m_\sigma(\nu)}^{-1}, 1, \dots, 1)$. We can decompose this map as 
    
    $$
     (\mathbb{C}^\times)^{m_\sigma(\nu)}/ (S_{m_\sigma(\nu)}\ltimes (\mathbb{Z}/2\mathbb{Z})^{m_\sigma(\nu)}) \to  (\mathbb{C}^\times)^{2m_\sigma(\nu)}/ S_{2m_\sigma(\nu)} \to  (\mathbb{C}^\times)^{2m_\sigma(\nu)+\alpha_\nu}/ S_{2m_\sigma(\nu)+\alpha_\nu}
     $$

     $$
     (x_1, \dots,x_{m_\sigma(\nu)}) \mapsto (x_1,x_1^{-1}, \dots,x_{m_\sigma(\nu)}, x_{m_\sigma(\nu)}^{-1}) \mapsto (x_1,x_1^{-1}, \dots,x_{m_\sigma(\nu)}, x_{m_\sigma(\nu)}^{-1}, 1, \dots, 1)
     $$

    Let us denote $m=m_\sigma(\nu)$, it remains to show that the map
    $$\begin{array}{cccc}
         & (\mathbb{C}^\times)^m/(S_m\ltimes (\mathbb{Z}/2\mathbb{Z})^m) & \longrightarrow & (\mathbb{C}^\times)^{2m}/S_{2m}  \\
         & (x_1, \dots,x_{m}) & \longmapsto & (x_1,x_1^{-1} \dots,x_{m}, x_{m}^{-1})
    \end{array}$$
    is a closed immersion. We identify $(\mathbb{C}^\times)^m/(S_m\ltimes (\mathbb{Z}/2\mathbb{Z})^m)$ with the set of unitary polynomials of degree $m$, namely
    $$\left\{Q(T) = \prod_{i=1}^m(T-(x_i+x_i^{-1})), \ x_i\in \mathbb{C}^\times\right\}.$$
    Similarly, we identify $(\mathbb{C}^\times)^{2m}/S_{2m}$ with unitary polynomials $P$ of degree $2m$ and such that $P(0) \not = 0$. The image of the map $(x_1, \dots,x_{m}) \mapsto (x_1,x_1^{-1} \dots,x_{m}, x_{m}^{-1})$ through this identification is given by
    $$\left\{P(T) = \prod_{i=1}^m(T-x_i)(T-x_i^{-1}), \ x_i\in \mathbb{C}^\times\right\}.$$
    Remark that 
    $$P(T)= T^{2m}P(T^{-1}),$$
    ensures that this image is indeed closed. Through these identifications, the map is given by
    $$\begin{array}{cccc}
         
         & Q & \longmapsto &  T^mQ(T+T^{-1}) = P(T) 
    \end{array}$$
    It is indeed a closed immersion because the inverse map is given by
    $$P \mapsto R(T) = T^{-m}P(T), $$
    and we have 
    $$R(T^{-1})=R(T).$$
    Hence $R$ is invariant under the inversion $T \mapsto T^{-1}$, in other words
    $$R \in \mathbb{C}[T, T^{-1}]^{\mathbb{Z}/2\mathbb{Z}} \cong \mathbb{C}[T+T^{-1}]\cong \mathbb{C}[T].$$

\end{proof}



\section{Algebraic family of representations}\label{FamRep}

In this section, we set $G=G_n(E)$. 

\subsection{$(G,X)$-modules}
We refer to the section VI.8 of \cite{Ren}. 

\begin{definition}[$(G, X)$-modules]
Let $X$ be a reduced commutative noetherian $\mathbb{C}$-algebra. A $(G,X)$-module is a $\mathbb{C}$-vector space $\mathcal{V}$ with a smooth representation $(\pi, \mathcal{V})$ of $G$ together with a structure of flat $X$-module where both actions commute.
   
\end{definition}

If $X=\mathbb{C}[\mathcal{X}(G)]$, then for all representation $(\pi, V)$ of $G$, the evaluation morphism  
$$\begin{array}{cccc}
     \mathrm{ev} : & G  & \longrightarrow & X^\times  \\
     & g & \longmapsto & \mathrm{ev}_g 
\end{array}
$$
allows us to define a $(G, X)$-module, $V_X = V \otimes_{\mathbb{C}} X$, such that for all $v \in V$, all $x \in X$ and all $g \in G$,
$$\pi_X(g)(v\otimes x)= \pi(g) v \otimes \mathrm{ev}_g \cdot x.$$
For $Q=MU$, and $(\sigma, W)$ a $(M, \mathbb{C}[\mathcal{X}(M)])$-module, then $i_Q^G\sigma$ is a $(G, \mathbb{C}[\mathcal{X}(M)]))$-module in a natural way.

Let $K_0$ be a maximal compact subgroup of $G$, $Q=MU$ a parabolic subgroup of $G$ and $\rho\in \mathrm{Irr}(M)$. We denote  
$$
V := i^{K_0}_{K_0\cap Q}(\rho) \cong i_Q^G(\rho \otimes \chi), \quad \chi \in \mathcal{X}(M).
$$

\begin{definition}
A function $F : \mathcal{X}(M) \to i_Q^G(\rho \otimes \chi)$ is regular if its image in $V$ is finite-dimensional and if it is regular in each variable.
\end{definition}

Let $X=\mathbb{C}[\mathcal{X}(M)]$ be the algebra of polynomial functions on the torus of unramified characters of $M$.

\begin{lem}\label{SectionReg}
The following statements are equivalent.
\begin{enumerate}
\item The function
$$
\begin{array}{cccc}
F : & \mathcal{X}(M) & \to & i_Q^G(\rho \otimes \chi) \\
& \chi & \mapsto & v_\chi
\end{array}
$$
is regular.

\item There exists a compact open subgroup $K \subset G$ such that $v_\chi$ is $K$-invariant for all $\chi \in \mathcal{X}(M)$, and for all $g \in G$ and all $e^\vee \in \rho^\vee$,

$$
\chi \mapsto \langle v_\chi(g), e^\vee \rangle
$$
is regular.

\item There exists $v \in i_Q^G(\rho_X)$ such that for all $\chi \in \mathcal{X}(M)$, $v_\chi = \mathrm{sp}_\chi(v)$, where $\mathrm{sp}_\chi$ denotes the canonical morphism from $i_Q^G(\rho_X)$ to $i_Q^G(\rho \otimes \chi)$.
\end{enumerate}
\end{lem}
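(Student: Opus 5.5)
We will prove the cycle of implications $(3)\Rightarrow(2)\Rightarrow(1)\Rightarrow(3)$, working throughout in the compact picture. Restriction of functions to $K_0$ identifies every $i_Q^G(\rho\otimes\chi)$ with the fixed space $V=i^{K_0}_{K_0\cap Q}(\rho)$. By the Iwasawa decomposition $G=QK_0$, an element $v\in i_Q^G(\rho\otimes\chi)$ is determined by its restriction to $K_0$. Explicitly, for $g=muk$,
$$v(g)=\delta_Q^{1/2}(m)\chi(m)\rho(m)v(k).$$
Similarly, $i_Q^G(\rho_X)$ is identified with $V\otimes_{\mathbb{C}}X$, where $X=\mathbb{C}[\mathcal{X}(M)]$. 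An element $\sum_j v_j\otimes x_j$ corresponds to the function $g=muk\mapsto \delta_Q^{1/2}(m)\sum_j \mathrm{ev}_m x_j\,\rho(m)v_j(k)$. Under this identification $\mathrm{sp}_\chi$ is simply evaluation of the $X$-coefficients at $\chi$.

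\textbf{$(3)\Rightarrow(2)$.} Write $v=\sum_{j=1}^r v_j\otimes x_j$ with finitely many terms. Choose a compact open $K$ fixing every $v_j$; this is possible because each $v_j$ is smooth. Right translation by $k'\in K$ commutes with the $X$-structure, so every $\mathrm{sp}_\chi(v)$ is $K$-invariant. For fixed $g=muk$ and $e^\vee\in\rho^\vee$ we have
$$\langle v_\chi(g),e^\vee\rangle=\delta_Q^{1/2}(m)\sum_j x_j(\chi)\chi(m)\langle\rho(m)v_j(k),e^\vee\rangle .$$
Since $\chi\mapsto\chi(m)$ is a regular function on $\mathcal{X}(M)$, this expression is regular in $\chi$.

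\textbf{$(2)\Rightarrow(1)$.} By the $K$-invariance, all restrictions $v_\chi|_{K_0}$ lie in $V^K$. The space $V^K$ is finite-dimensional because $\rho$ is admissible and $(K_0\cap Q)\backslash K_0/K$ is finite. Choose a basis $(f_i)$ of $V^K$ and write $v_\chi|_{K_0}=\sum_i c_i(\chi)f_i$. The linear functionals $f\mapsto\langle f(k),e^\vee\rangle$, for $k$ in a finite set of coset representatives of $(K_0\cap Q)\backslash K_0/K$ and $e^\vee$ in a finite spanning set of the relevant finite-dimensional dual, separate the points of $V^K$. One can therefore solve for the $c_i(\chi)$ as fixed linear combinations of the regular functions $\chi\mapsto\langle v_\chi(k),e^\vee\rangle$. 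Hence the image of $F$ lies in the finite-dimensional space $V^K$, and its coordinates are regular functions on the torus. This is exactly regularity in the sense of the definition.

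\textbf{$(1)\Rightarrow(3)$.} Let $W\subset V$ be a finite-dimensional space containing the image of $F$, and choose a basis $(w_i)$ of $W$. Write $F(\chi)=\sum_i c_i(\chi)w_i$. Regularity in each variable on $\mathcal{X}(M)\cong(\mathbb{C}^\times)^k$ means each $c_i$ is separately a Laurent polynomial in each variable. The one step needing care is to upgrade this to $c_i\in X$. For this we use Hartogs' theorem: separate holomorphy implies joint holomorphy. A holomorphic function on $(\mathbb{C}^\times)^k$ whose Laurent expansion is finite in each variable separately, uniformly, is a Laurent polynomial. In the intended reading, \emph{regular} is understood as belonging to $X$ coordinatewise, in which case this step is immediate. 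Then set $v=\sum_i w_i\otimes c_i\in V\otimes X=i_Q^G(\rho_X)$; by the description of $\mathrm{sp}_\chi$ above, $\mathrm{sp}_\chi(v)=F(\chi)$.

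The only genuine obstacle is the bookkeeping of the compact-picture identifications: checking that $i_Q^G(\rho_X)\cong V\otimes X$ is compatible with specialization. This follows from $\mathrm{ev}_{mk}=\mathrm{ev}_m$, since unramified characters are trivial on $K_0\cap M$.
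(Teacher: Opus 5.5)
Your argument is correct and is essentially the paper's. Both use the compact picture $V=i^{K_0}_{K_0\cap Q}(\rho)$, finite-dimensionality of the $K$-fixed vectors with coordinates recovered from the functionals $\langle v(k),e^\vee\rangle$, and the identification $i_Q^G(\rho_X)\cong V\otimes X$ under which $\mathrm{sp}_\chi$ evaluates coefficients at $\chi$. You arrange this as the cycle $(3)\Rightarrow(2)\Rightarrow(1)\Rightarrow(3)$, which makes explicit the direction $(3)\Rightarrow(1)$ that the paper leaves implicit. Your restriction to $V^K$ and finitely many double-coset representatives is also a cleaner version of the paper's claim that these functionals ``generate $V^*$''.

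The Hartogs aside is unnecessary and, as written, presupposes the uniform degree bound it would need to prove; that bound would require a Baire-category or interpolation argument. Since you, like the paper, ultimately read ``regular'' as having coordinates in $X$, nothing rests on it.
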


\begin{proof}
Let us show that (1) implies (2). By definition of the induced representation, for every character $\chi \in \mathcal{X}(M)$ there exists a compact open subgroup $K_\chi \subset G$ such that $v_\chi$ is $K_\chi$-invariant. Since $K_0$ is maximal, we have $K_\chi \subset K_0$. By assumption, $\mathrm{Im}(F) \subset V$ is of dimension $d$, so we fix a basis $\{v_1, \dots, v_d\}$ and denote by $K_i$ a compact subgroup fixing $v_i$. Let $K'=\cap_{i=1}^d K_i$. Thus, $v_\chi$ is $K'$-invariant for all $\chi \in \mathcal{X}(M)$. Moreover, since $F$ is regular in each variable, the condition
$$
\chi \mapsto \langle v_\chi(g), e^\vee \rangle, \quad g \in G, \ e^\vee \in \rho^\vee
$$
is simply a reformulation.

Let us show that (2) implies (1). Let $K'$ be a compact open subgroup of $G$ such that $v_\chi$ is $K'$-invariant for all $\chi \in \mathcal{X}(M)$. Then for all $\chi \in \mathcal{X}(M)$,
$$
v_\chi \in i_Q^G(\rho \otimes \chi)^{K'} \cong V^{K'}.
$$
Hence, the image of $F$ is finite-dimensional. For all $g \in G$ and all $e^\vee \in \rho^\vee$, define
$$
\begin{array}{cccc}
\langle \mathbf{ev}_g(\cdot), e^\vee \rangle : & V & \longrightarrow & \mathbb{C} \\
& v_\chi & \longmapsto & \langle v_\chi(g), e^\vee \rangle
\end{array}
$$
By assumption, the composition
$$
\langle \mathbf{ev}_g(\cdot), e^\vee \rangle \circ F : \chi \mapsto \langle v_\chi(g), e^\vee \rangle
$$
is regular for all $g \in G$ and all $e^\vee \in \rho^\vee$. Since the family
$$
\{\langle \mathbf{ev}_g(\cdot), e^\vee \rangle, \ g \in G, \ e^\vee \in \rho^\vee\}
$$
generates $V^*$, the function $F$ is regular.

To prove the equivalence between (3) and (1), it suffices to show that (1) implies (3). Let $\chi \in \mathcal{X}(M)$ and denote by $\phi_\chi$ the isomorphism between $i_Q^G(\rho \otimes \chi)$ and $i^{K_0}_{K_0\cap Q}(\rho)$. Let $\{v_1, \dots, v_d\}$ be a basis of the image of $F$ in $V = i^{K_0}_{K_0\cap Q}(\rho)$, and $\{v_{\chi,1}, \dots, v_{\chi,d}\}$ the corresponding basis in $i_Q^G(\rho \otimes \chi)$. We know that
$$
\phi_\chi(v_\chi)= \sum_{i=1}^d \lambda_{i,\chi} v_i,
$$
with $\lambda_i : \chi \mapsto \lambda_{i,\chi}$ regular since $F$ is regular, by assumption. Set $v_{X,i} = v_i \otimes \lambda_i$. We have an isomorphism of $(G,X)$-modules
$$
\phi_X : i_Q^G(\rho_X) \to i_{K_0\cap Q}^{K_0}(\rho) \otimes X.
$$
For each $i \in \{1, \dots, d\}$, define
$$
w_i = \phi_X^{-1}(v_i \otimes \lambda_i).
$$
Then we obtain
$$
\mathrm{sp}_\chi(v_i) = \phi_\chi^{-1}(v_i) \otimes \lambda_{i,\chi} = \lambda_{i,\chi} v_{\chi,i}.
$$
This completes the proof of the lemma.
\end{proof}

\subsection{Twisted Jacquet module}

For $\pi$ a representation of $G$, let $\mathbf{j}_{\psi_n}(\pi) = \mathbf{j}_{\psi_n}^G(\pi)$ denote the \emph{twisted Jacquet module} that is, the quotient of $\pi$ by the sub-vector space
$$\langle \pi(u)v-\psi_n(u)v, \ u\in N_n, \ v \in \pi \rangle.$$
The functor $\pi \mapsto \mathbf{j}_{\psi_n}(\pi)$ is exact. Moreover, $\pi$ is generic if and only if $\mathbf{j}_{\psi_n}(\pi)$ is non-trivial in which case $\mathbf{j}_{\psi_n}(\pi)$ is one-dimensional if $\pi$ is irreducible. Recall that if $\pi=i_Q^G(\sigma)$, with $Q=MU$ and $\sigma$ is an irreducible representation of $M$, then $\pi$ is $(N_n,\psi_n)$-generic if and only if $\sigma$ is $(M\cap N_n, \psi_n)$-generic (\cite{Rodier}, Theorem 7). Thanks to Lemma 2.6 in \cite{LapidMao}, we have an isomorphism of vector spaces
$$\mathbf{j}_{\psi_n}(\pi) \cong \mathbf{j}_{\psi_{n}}^M(\sigma).$$

\subsection{Family of Jacquet's functionals}\label{famJacquet}
Let $Q=MU$ be a standard parabolic subgroup of $G$, and denote by $\overline{Q}=M\overline{U}$ the anti-standard parabolic subgroup opposite to $Q$. Let $ \sigma\in \mathrm{Irr_{gen}}(M)$, we define $X=\mathbb{C}[\mathcal{X}(M)]$ the algebra of polynomial functions on the torus of unramified characters of $M$. One can associate to $\sigma$, a $(M, X)$-module $\sigma_X = \sigma \otimes X$. As $\sigma$ is generic, there exists a non-trivial Whittaker functional $l: \sigma \to \mathbb{C}$. We then define
$$\begin{array}{cccc}
     l_X : & \sigma_X & \longrightarrow & X \\
     & v \otimes x & \longmapsto & l(v)x
\end{array}$$
It satisfies 
$$l_X(u. (v\otimes x)) = \psi_{n}(u)l(v)x, \  u\in M\cap N_n, \ v\otimes x\in V_X$$
because the evaluation map $ev_g: \chi \mapsto \chi(g)$ is trivial on $M\cap N_n$. Denote $\sigma_\chi=\sigma\otimes \chi$, for $\chi \in \mathcal{X}(M)$. There exists a constant $C>0$ such that, if 
$$\langle \Re(\chi), \alpha^\vee\rangle\leq -C, \ \forall \alpha^\vee \in \Delta_Q^\vee,$$
then for all $\varphi \in i_{\overline{Q}}^G(\sigma_\chi)$ the integral
    $$\int_U l(\varphi(u))\psi_n(u)^{-1}du$$
is absolutely convergent. We then define 
    $$\begin{array}{cccc}
        J_{\sigma_\chi,l}^{\psi_n}: & i_{\overline{Q}}^G(\sigma_\chi) & \longrightarrow & \mathbb{C} \\
         & \varphi & \longmapsto & \int_{U} l(\varphi(u))\psi_{n}(u)^{-1} du
    \end{array}$$
and extend it regularly to all unramified characters $\chi$ (see Section 2 \cite{CassShal}).
It allows us to define a \emph{family of Jacquet functionals} from $l_X$, 
$$\begin{array}{cccc}
        J_{\sigma_X,l_X}^{\psi_n}: & \mathbf{j}_{\psi_n}(i_{\overline{Q}}^G(\sigma_X)) & \longrightarrow & X \\
        
    \end{array}$$   
The Frobenius reciprocity gives a morphism
    $$\begin{array}{cccc}
         \Tilde{J}_{\sigma_\chi}^{\psi_n}: & i_{\overline{Q}}^G(\sigma_\chi) & \longrightarrow & C^\infty(N_n \backslash G, \psi_n) \\
         & v &\longmapsto & J_{\sigma_\chi}^{\psi_n}(i_{\overline{Q}}^G(\sigma_\chi)(\cdot)v)
    \end{array}
     $$
and we define $\mathcal{W}(i_Q^G(\sigma_ \chi), \psi_n)$ as the image of $\Tilde{J}_{\sigma_\chi}^{\psi_n}$ in $C^\infty(N_n \backslash G, \psi_n)$ (even when the representation $i_Q^G(\sigma_ \chi)$ is reducible).
\begin{lem}\label{Whittreg}
    Let $Q=MU$ be a standard parabolic subgroup of $G$ and $\sigma\in\mathrm{Irr}_{\mathrm{gen}}(M)$. Denote $\pi_\chi=i_Q^G(\sigma\otimes \chi)$. There exists a regular section $\chi\in \mathcal{X}(M)\mapsto v_\chi \in \pi_\chi$, such that $\Tilde{J}_{\sigma_\chi}^{\psi_n}(v_\chi) = W_{f, \pi_\chi}^1$, for some $f\in C_c^\infty(G)$.
\end{lem}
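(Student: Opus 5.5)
The plan is to produce $f$ explicitly and to compute $W_{f,\pi_\chi}^1$ as a Whittaker function given by the Jacquet integral. Fix $\sigma\in\mathrm{Irr}_{\mathrm{gen}}(M)$ with Whittaker functional $l$. Realize all $\pi_\chi$ on the common compact model $V=i^{K_0}_{K_0\cap Q}(\sigma)$. For $f\in C_c^\infty(G)$ bi-invariant under a small compact open $K$, the operator $\pi_\chi(f^\vee)$ acts on the finite-dimensional space $V^K$, and its matrix coefficients are regular (Laurent polynomial) in $\chi$, because $f$ has compact support. Here I use $\pi_\chi=i_Q^G(\sigma\otimes\chi)\cong i_{\overline Q}^G(\sigma\otimes\chi')$ up to the standard intertwining; to avoid poles I will work directly with the model on which $\tilde J^{\psi_n}_{\sigma_\chi}$ is defined.

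Step one expresses $W^1_{f,\pi_\chi}$ through a trace. Choose a basis $(e_i)$ of $V^K$ and the dual basis $(e_i^\vee)$ in $(\pi_\chi^\vee)^K$; these are regular in $\chi$ as well, via the model $i^{K_0}_{K_0\cap \overline Q}(\sigma^\vee)$. Then
$$f_{\pi_\chi}(g)=\sum_i\langle \pi_\chi(g)\pi_\chi(f^\vee)e_i,e_i^\vee\rangle .$$
Following the argument of Proposition~2.3 and Lemma~2.11 of \cite{LapidMao}, the regularized integral $\int_N^*$ of a matrix coefficient against $\psi_n^{-1}$ factors as a Whittaker functional on the $\pi_\chi$ side times a Whittaker functional on the contragredient side. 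This gives
$$W^1_{f,\pi_\chi}=\sum_i \tilde J^{\psi_n}_{\sigma_\chi}(\pi_\chi(f)e_i)\, J^{\psi_n^{-1}}_{\sigma^\vee_\chi}(e_i^\vee),$$
up to normalization of measures and the convention on $f$ versus $f^\vee$.

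Step two chooses $f$ so that this sum reduces to a single term. Take $f$ supported on a small neighbourhood of the identity and in the Iwahori–type cell $\overline U M U$, such as $f=\mathbf 1_{K}$ times a function concentrated so that $\pi_\chi(f)$ is, on a suitable vector, a projector onto a vector $\varphi_0$ supported in $\overline Q\,U_c$ for a small compact $U_c\subset U$. Then the Jacquet integral $J^{\psi_n}_{\sigma_\chi}$ and its dual version become finite integrals, hence regular (indeed polynomial) in $\chi$, and the contragredient factor is a nonzero constant. Setting $v_\chi:=\sum_i J^{\psi_n^{-1}}_{\sigma_\chi^\vee}(e_i^\vee)\,\pi_\chi(f)e_i$ gives
$$\tilde J^{\psi_n}_{\sigma_\chi}(v_\chi)=W^1_{f,\pi_\chi}.$$
Each ingredient is $K$-invariant and regular in $\chi$, so criterion (2) of Lemma~\ref{SectionReg} shows that $\chi\mapsto v_\chi$ is a regular section.

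The main obstacle is the regularity of the contragredient coefficients $J^{\psi_n^{-1}}_{\sigma_\chi^\vee}(e_i^\vee)$. These are defined only by analytic continuation and could a priori be merely rational in $\chi$. I would handle this by first treating the convergent range $\langle\Re\chi,\alpha^\vee\rangle\le -C$, then using the theorem of \cite{CassShal} that the continued Jacquet functional is entire (polynomial) in $\chi$ in the compact model. If that fails to give polynomiality uniformly, I would instead choose $f$ so that only vectors supported in the big cell contribute, where all integrals are finite.
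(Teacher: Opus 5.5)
Your Step one is exactly the paper's proof: expand $f_{\pi_\chi}$ over a basis of $\pi_\chi^K$ and its dual basis, factor the regularized integral via Lapid--Mao as $J^{\psi_n}_{\sigma_\chi}(\pi_\chi(g)\pi_\chi(f^\vee)e_i)\,J^{\psi_n^{-1}}_{\sigma^\vee\otimes\chi^{-1}}(e_i^\vee)$, and take $v_\chi=\sum_i J^{\psi_n^{-1}}_{\sigma^\vee\otimes\chi^{-1}}(e_i^\vee)\,\pi_\chi(f^\vee)e_i$. Regularity then comes from the compact model together with the regular continuation of Jacquet functionals from \cite{CassShal}, which you rightly identify as the key input (the paper uses it implicitly). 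You should drop Step two: shrinking the support of $f$ does not affect the coefficients $J^{\psi_n^{-1}}(e_i^\vee)$, which are Jacquet integrals over a full dual basis of $(\pi_\chi^\vee)^K$ and are not finite integrals in general. Without Step two the argument gives the identity for every $f\in C_c^\infty(G)$, which is how the lemma is used afterwards.
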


\begin{proof}
   
    We identify $\pi_\chi^\vee$ with $i_Q^G(\sigma^\vee\otimes \chi^{-1})$ via the pairing
    $$\langle v, v^\vee\rangle_{\pi_\chi} = \int_{Q\backslash G}\langle v(g), v^\vee(g) \rangle dg.$$
    If we denote $\mathcal{B}(\pi_\chi)$ a basis of $\pi_\chi^{K}$ (we will denote $v^\vee$ the corresponding elements in the dual basis of $\mathcal{B}(\pi_\chi)$), where $K$ is a compact-open subgroup of $G$ for which  $f$ is right-invariant, then we have 
    $$W_{f,\pi_\chi}^1(g)=\sum_{v \in \mathcal{B}(\pi_\chi)} \int^*_N \langle \pi_\chi(ug)\pi_\chi(f^\vee)v, v^\vee \rangle_{\pi_\chi}\psi_n(u)^{-1}du.$$
    By Proposition 2.8 in \cite{LapidMao}, 
    $$W_{f,\pi_\chi}^1(g)=\sum_{v \in \mathcal{B}(\pi_\chi)}J_{\sigma \otimes \chi}^{\psi_n}(\pi_\chi(g)\pi_\chi(f^\vee)v). J_{\sigma^\vee\otimes\chi^{-1}}^{\psi_n^{-1}}(v^\vee)$$
    but
    $$J_{\sigma_\chi}^{\psi_n}(\pi_\chi(g)\pi_\chi(f^\vee)v) = \Tilde{J}_{\sigma_\chi}^{\psi_n} (\pi_\chi(f^\vee)v)(g),$$
    therefore
    $$W_{f,\pi_\chi}^1(g)=\Tilde{J}_{\sigma_\chi}^{\psi_n}\left(\sum_{v \in \mathcal{B}(\pi_\chi)} J_{\sigma^\vee\otimes\chi^{-1}}^{\psi_n^{-1}}(v^\vee).\pi_\chi(f^\vee)v\right).$$
    
\end{proof}

\subsection{A family version of Gelfand-Kazhdan's theorem}\label{GK}
Let $P_n = P_n(F)$ and $N_n= N_n(F)$. The following result is a variant of Theorem 6 in \cite{GelfKaz}.
\begin{prop}[Family version of Gelfand-Kazhdan]\label{GKfam}
    Let A be a complex algebra of finite type, and denote by
    $$C^\infty(N_n\backslash P_n, A, \psi_n)$$
    the $(P_n, A)$-module of smooth functions $f:P_n\to A$ such that 
    $$f(up)= \psi_n(u)f(p), \ p\in P_n, \ u\in N_n$$
    on which $P_n$ acts by right translation and $A$ by pointwise multiplication, and both actions commute. If $V\subseteq C^\infty(N_n\backslash P_n, A, \psi_n)$ is a sub-$(P_n, A)$-module, such that for all $p\in P_n$, 
    $$ev_p(V)=\{f(p), \ f\in V\} = A,$$ 
    then $V$ contains $C_c^\infty(N_n\backslash P_n, A, \psi_n)$.    
\end{prop}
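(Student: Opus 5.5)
We follow the inductive strategy of Gelfand--Kazhdan (and Bernstein--Zelevinsky's description of the Kirillov model), running the induction on $n$ through the chain $P_n = G_{n-1}U_n$, while keeping track of the coefficient algebra $A$. For $n=1$, $P_1$ and $N_1$ are trivial and the hypothesis $ev_1(V)=A$ says $V=A=C_c^\infty(N_1\backslash P_1,A,\psi_1)$. For the inductive step, note that $C_c^\infty(N_n\backslash P_n,A,\psi_n)$ is generated, as a $(P_n,A)$-module, by functions supported on small neighbourhoods $N_n p K$ of single double cosets. So it suffices to show two things: for every $p\in P_n$ and every sufficiently small compact open $K$, the function $f_{p,K,a}$ supported on $N_n pK$ with $f(upk)=\psi_n(u)a$ lies in $V$ (when well defined); and $V$ is an $A$-module.

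\textbf{Step 1 (localization along $U_n$).} Take $f\in V$ with $f(p)=a$; it exists by hypothesis. Since $V$ is $P_n$-stable, the operators
\[
f\mapsto \int_{U_n\cap K'} \theta(u)^{-1}\,(R(u)f)\,du
\]
lie in $V$, where $\theta$ ranges over characters of $U_n\cong F^{n-1}$. By Fourier analysis on $U_n$, these operators project onto the $(U_n,\theta)$-isotypic pieces of $f$. The characters of $U_n$ form a single $G_{n-1}$-orbit of nontrivial ones together with the trivial character, and $\psi_n|_{U_n}$ is nontrivial. Conjugating by $G_{n-1}$, we can therefore cut $f$ down to a function whose restriction to $U_n$-translates is concentrated near the $P_{n-1}U_n$-coset where $\theta=\psi_n|_{U_n}$. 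Doing this uniformly, we obtain elements of $V$ of the form $f'(gu)=\psi_n(u)\phi(g)$ for $g\in P_{n-1}$. The $\phi$'s obtained form a sub-$(P_{n-1},A)$-module $V'$ of $C^\infty(N_{n-1}\backslash P_{n-1},A,\psi_{n-1})$. Because the averaging is localized near $p$ and only changes the value by a nonzero volume factor, $V'$ still satisfies $ev_{g}(V')=A$ for all $g$.

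\textbf{Step 2 (induction and conclusion).} By induction, $V'\supseteq C_c^\infty(N_{n-1}\backslash P_{n-1},A,\psi_{n-1})$. Translating the corresponding elements of $V$ by $G_{n-1}$ produces all compactly supported functions on $N_n\backslash P_n$ modulo $U_n$-behaviour governed by $\psi_n$, i.e.\ $C_c^\infty(N_n\backslash P_n,A,\psi_n)$. Here we use that $N_n\backslash P_n \cong N_{n-1}\backslash G_{n-1}$ fibres over $P_{n-1}\backslash G_{n-1}$, and that the relevant orbit is open, as in Bernstein--Zelevinsky. The $A$-linearity is automatic: the hypothesis gives surjectivity onto $A$, and multiplying by $a\in A$ commutes with all the operators used, so only the ``$a=1$'' functions $f_{p,K,1}$ need to be produced. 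One then multiplies them by elements of $A$, which is allowed since $V$ is an $A$-submodule.

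\textbf{Main obstacle.} The hardest step will be Step 1 over a general finite-type algebra $A$. The Fourier projections are integrals of $A$-valued smooth functions over compact sets, and these are finite sums because of smoothness, so they make sense without any topology on $A$. The delicate point is to arrange that the resulting value at the base point is a unit multiple of the prescribed $a$, rather than merely nonzero pointwise at each point of $\mathrm{Spec} A$. I would handle this by choosing $K'$ so small that $f$ is constant on $pK'$; then the averaged value is exactly $\mathrm{vol}(K')\,a$, which avoids any need for pointwise nonvanishing arguments on $\mathrm{Spec} A$.
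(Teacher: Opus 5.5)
Your ingredients match the paper's: induction through $P_n=G_{n-1}U_n$, Fourier analysis along $U_n\cong F^{n-1}$, the induction hypothesis on the fibre $N_{n-1}\backslash P_{n-1}$, and translation by $G_{n-1}$. But the inductive step has a genuine gap in Step 2. The induction hypothesis only gives, for each $\phi\in C_c^\infty(N_{n-1}\backslash P_{n-1},A,\psi_{n-1})$, some $h\in V$ with $h|_{P_{n-1}}=\phi$. Such an $h$ is uncontrolled on all the other fibres $P_{n-1}g$ of the projection $\mathbf{p}\colon N_{n-1}\backslash G_{n-1}\to P_{n-1}\backslash G_{n-1}\cong F^{n-1}\setminus\{0\}$. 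Translating it by $G_{n-1}$ only moves around the single fibre on which it is known. So the claim that these translates ``produce all compactly supported functions'' does not follow, and the open-orbit remark does not supply it.

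Note also that what you extract from Step 1 is automatic. The identity $f'(gu)=\psi_n(u)f'(g)$ for $g\in P_{n-1}$, $u\in U_n$ holds for every function in $C^\infty(N_n\backslash P_n,A,\psi_n)$, because $P_{n-1}$ normalizes $U_n$ and fixes $\psi_n|_{U_n}$. And $ev_g(V')=A$ is immediate if you simply take $V'=\{h|_{P_{n-1}}: h\in V\}$. So your localization is never actually used, and the point you flag as the main obstacle is not where the difficulty lies.

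What is missing is the gluing argument, which is the actual content of the paper's proof.

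First, record Step 1 as a multiplier statement. For $\varphi\in C_c^\infty(F^{n-1},A)$ and $g\in G_{n-1}$ one has $(R(\varphi)f)(g)=\hat\varphi(e_{n-1}g)f(g)$. Since the Fourier transform is bijective on $C_c^\infty(F^{n-1},A)$, $V$ is stable under multiplication by $\mathbf{1}_\Omega\circ\mathbf{p}$ for every compact open $\Omega\subset P_{n-1}\backslash G_{n-1}$.

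Next, take a target $f\in C_c^\infty(N_n\backslash P_n,A,\psi_n)$ and $g\in G_{n-1}$. Apply the induction hypothesis to $V_g=\{p\mapsto h(pg): h\in V\}$, which is a $(P_{n-1},A)$-module with $ev_p(V_g)=A$. This gives $h_g\in V$ agreeing with $f$ on $P_{n-1}g$. Since $h_g-f$ is right invariant under some compact open $K''$, it vanishes on (the image of) $P_{n-1}g(K''\cap G_{n-1})$. That set is $\mathbf{p}^{-1}(\Omega_g)$ for an open neighbourhood $\Omega_g$ of $P_{n-1}g$.

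Finally, cover the compact set $\mathbf{p}(\mathrm{Supp} f)$ by finitely many disjoint compact open $\Omega_i\subset\Omega_{g_i}$. Then $f=\sum_i(\mathbf{1}_{\Omega_i}\circ\mathbf{p})\,h_{g_i}$, which lies in $V$ by the multiplier statement.

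Your reduction to the functions $f_{p,K,a}$ does not remove the need for this. The neighbourhood on which the $h$ supplied by induction agrees with $f_{p,K,a}$ depends on $h$, hence on $K$. So you still need the cut-off and a finite covering of $\mathbf{p}$ of the support of $f_{p,K,a}$.
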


\begin{proof}
    Let $n\in \mathbb{N}^*$, and suppose that that the theorem is true for all $k<n$. Let $U_n$ be the unipotent radical of $P_n$ of matrices of the form 
    $$\begin{pmatrix} 1 & \cdots & 0 & * \\
                  \vdots & \ddots & \vdots & \vdots \\
                  0 & \cdots & 1 & * \\
                  0 & \cdots & 0 & 1 \end{pmatrix}$$
    such that $P_n = G_{n-1}U_n$ and $U_n\cong F^{n-1}$. We have a surjective map
    $$N_n\backslash P_n = N_{n-1}\backslash G_{n-1} \longrightarrow P_{n-1}\backslash G_{n-1}.$$
    The action of $G_{n-1}$ on $F^{n-1}$ admits two orbits : $\{0\}$ and $F^{n-1}\backslash \{0\}$. The mirabolic subgroup can be seen as a stabilizer under this action, 
    $$P_{n-1}=\mathrm{Stab}_{G_{n-1}}\left( (0, \dots, 0, 1) \right), $$
    then we have an isomorphism
    $$\begin{array}{cccc}
         & P_{n-1}\backslash G_{n-1} & \longrightarrow & F^{n-1}\backslash \{0\} \\
         & P_{n-1}g & \longmapsto & (0, \dots, 0, 1)g
    \end{array}$$
    Let $\varphi \in C_c^\infty(U_n, A)=C_c^\infty(F^{n-1}, A)$, $f\in V$ (note that $R(\varphi)f\in V$), and $g\in G_{n-1}$. We compute 
    \begin{align*}
        (R(\varphi)f)\begin{pmatrix}
            g & 0 \\ 0 & 1
        \end{pmatrix} & = \int_{F^{n-1}} \varphi(x_1, \dots, x_{n-1})f\left(\begin{pmatrix}
            g & \\ & 1
        \end{pmatrix} \begin{pmatrix}
            1 & 0 & \cdots & 0 & x_1 \\
             & \ddots & \ddots &\vdots & \vdots \\
             & & \ddots &0 & x_{n-1} \\
             & & & & 1
        \end{pmatrix}\right)d(x_1, \dots, x_{n-1}), \\
        & = \int_{F^{n-1}} \varphi(x_1, \dots, x_{n-1}) \psi\left(e_{n-1}g.\begin{pmatrix}
            x_1 \\ \vdots \\ x_{n-1}
        \end{pmatrix}\right)d(x_1, \dots, x_{n-1}) f\left(\begin{pmatrix}
            g & \\ & 1
        \end{pmatrix}\right), \\
        & = \hat{\varphi}(e_{n-1}g)f\left(\begin{pmatrix}
            g & \\ & 1
        \end{pmatrix}\right),
    \end{align*}
    where $\hat{\varphi}$ is the Fourier transform of $\varphi$ with respect to $\psi$. Recall that the Fourier transform 
    $$\varphi \mapsto \hat{\varphi}$$
    is an isomorphism of $C_c^\infty(F^{n-1}, A)$ to itself. By the identification $P_{n-1}\backslash G_{n-1}\cong F^{n-1}\backslash \{0\}$, we get 
    $$C_c^\infty(P_{n-1}\backslash G_{n-1}, A) \subset C_c^\infty(F^{n-1}, A).$$
    Therefore, this shows that the representation $V$ is fixed under multiplication by elements of $C_c^\infty(P_{n-1}\backslash G_{n-1}, A)$. 

    Now, suppose that $f\in C_c^\infty(N_n\backslash P_n,A, \psi_n)\cong C_c^\infty(N_{n-1}\backslash G_{n-1}, A, \psi_{n-1})$. The image of the support of $f$ is compact in $N_{n-1} \backslash G_{n-1}$, hence, considering the projection $$\mathbf{p}: N_{n-1}\backslash G_{n-1} \to P_{n-1}\backslash G_{n-1},$$ 
    its image is also compact in $P_{n-1}\backslash G_{n-1}$. An element of $\mathbf{p}(\mathrm{Supp}(f))$ is of the form $P_{n-1}g$ with $g\in G_{n-1}$, and 
    $$\mathbf{p}^{-1}(P_{n-1}g)=N_{n-1}\backslash P_{n-1} g \cong N_{n-1}\backslash P_{n-1}.$$
    We consider the $(P_{n-1}, A)$-submodule of $C^\infty(N_{n-1}\backslash P_{n-1}, A, \psi_{n-1})$ 
    $$V_g=\{p\in P_{n-1}\mapsto h(pg), \ h \in V\}.$$
    For all $p\in P_{n-1}$, 
    $$ev_p(V_g)=ev_{pg}(V)=A.$$    
    By induction, this space $V_g$ contains $C_c^\infty(N_{n-1}\backslash P_{n-1}, A, \psi_{n-1})$. Finally, 
    $$f\vert_{P_{n-1}g}\in \{p\in P_{n-1}\mapsto h(pg), \ h \in V\}.$$
    In conclusion, we have shown that for all $g\in G_{n-1}$, there exists a function $h_g\in V$ such that 
    $$h_g\vert_{P_{n-1}g}=f\vert_{P_{n-1}g}.$$
    By smoothness of these functions, we deduce that for all $g\in G_{n-1}$, there exists $U_g$ is a neighborhood of $g$ in $P_{n-1}\backslash G_{n-1}$, such that for all $g'\in U_g$
    $$h_g\vert_{P_{n-1}g'}=f\vert_{P_{n-1}g'}.$$
    As $\mathbf{p}(\mathrm{Supp}(f))$ is compact in $P_{n-1}\backslash G_{n-1}$, there exists $g_1, \dots, g_N\in \mathbf{p}(\mathrm{Supp}(f))$ such that 
    $$\mathbf{p}(\mathrm{Supp}(f))\subseteq \bigcup_{i=1}^N U_{g_i}.$$
    We can also choose disjoint compact open neighborhoods $V_i\subseteq U_{g_i}$ such that 
    $$\mathbf{p}(\mathrm{Supp}(f))\subseteq \bigsqcup_{i=1}^N V_{i}.$$ 
    And finally, for all $i$, there exists $h_i=h_{g_i}\in V$ such that 
    $$h_i\vert_{\mathbf{p}^{-1}(V_i)}=f\vert_{\mathbf{p}^{-1}(V_i)}$$
    and 
    $$\mathrm{Supp}(f)\subseteq \bigsqcup_{i=1}^N\mathbf{p}^{-1}(V_i).$$
    Then $f=\sum_{i=1}^N\mathbf{1}_{V_i}h_i \in V$, since $\mathbf{1}_{V_i}\in C_c^\infty(P_{n-1} \backslash G_{n-1}, A)$ and we have already seen that $V$ is invariant by multiplication by $C_c^\infty(P_{n-1} \backslash G_{n-1}, A)$.
\end{proof}

\begin{coro}\label{CoroGKfamKirrilov}
    Let $Q=LU$ be a parabolic subgroup of $G$, let $\rho\in \Irr{L}$ be generic, and let $X=\mathbb{C}[\mathcal{X}(L)]$. Then, the $(P_n, X)$-submodule $\mathcal{W}(i_Q^G(\rho_X), \psi_n)\subset C^\infty(N_n\backslash P_n, X, \psi_n)$, contains $C_c^\infty(N_n\backslash P_n, X, \psi_n)$.
\end{coro}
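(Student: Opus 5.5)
The corollary follows from Proposition~\ref{GKfam} applied with $A=X=\mathbb{C}[\mathcal{X}(L)]$, a finitely generated (Laurent polynomial) complex algebra. Take $V$ to be the image of $\mathcal{W}(i_Q^G(\rho_X),\psi_n)$ under restriction to $P_n$. I read $\mathcal{W}(i_Q^G(\rho_X),\psi_n)$ as the image of the family Jacquet map
\[
v\mapsto \bigl(g\mapsto J^{\psi_n}_{\rho_X,l_X}(i_Q^G(\rho_X)(g)v)\bigr),
\]
which takes values in $C^\infty(N_n\backslash G,X,\psi_n)$. If needed, first conjugate so that the relevant parabolic is the anti-standard one used in Part~\ref{famJacquet}. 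Two things must be checked: that $V$ is a sub-$(P_n,X)$-module, and that $ev_p(V)=X$ for every $p\in P_n$.

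\textbf{The module structure.} The Jacquet map $v\mapsto \tilde J(v)$ intertwines the $G$-actions, since it is defined through $i_Q^G(\rho_X)(g)$. It is also $X$-linear, because $l_X$ is $X$-linear and the integral defining $J$, together with its regular continuation, commutes with multiplication by elements of $X$. Restricting to $P_n$ therefore gives a sub-$(P_n,X)$-module of $C^\infty(N_n\backslash P_n,X,\psi_n)$. Regularity in $\chi$, in the sense of Lemma~\ref{SectionReg}(3), guarantees that these functions genuinely take values in $X$ and are smooth.

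\textbf{Surjectivity of $ev_p$.} Fix $p\in P_n$. Since $V$ is $P_n$-stable, $ev_p(V)=ev_1(R(p)V)=ev_1(V)$, so it suffices to show that $ev_1(V)=J^{\psi_n}_{\rho_X,l_X}(i_Q^G(\rho_X))$ equals $X$. This image is an ideal of $X$, because $J$ is $X$-linear and $i_Q^G(\rho_X)$ is an $X$-module. By the Nullstellensatz, it is all of $X$ as soon as it is contained in no maximal ideal. Maximal ideals of $X$ correspond to points $\chi\in\mathcal{X}(L)$, and composing with the specialization $\mathrm{sp}_\chi$ turns $J_X$ into $J^{\psi_n}_{\rho_\chi}$. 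Since $\mathrm{sp}_\chi$ is surjective, the ideal avoids the maximal ideal at $\chi$ exactly when $J^{\psi_n}_{\rho_\chi}\neq0$ on $i_Q^G(\rho_\chi)$.

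\textbf{Main obstacle.} The real content is this non-vanishing of the specialized Jacquet functional at every $\chi$, including points where the induced representation is reducible or where the integral defining $J$ only makes sense by continuation. I would take it from the Casselman--Shalika/Jacquet theory cited in Part~\ref{famJacquet}. By Rodier and the isomorphism $\mathbf{j}_{\psi_n}(i_Q^G\rho_\chi)\cong \mathbf{j}^L_{\psi_n}(\rho_\chi)$ (Lemma~2.6 of \cite{LapidMao}), the space of Whittaker functionals is one-dimensional and spanned by the continued $J$. Concretely, for $\varphi$ supported in the big cell $QwN_n$ with small support, $J(\varphi)=l(\varphi(w))$ is independent of $\chi$. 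This gives a single section whose image under $J_X$ is a nonzero constant, hence a unit of $X$, which proves $ev_1(V)=X$ directly.

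With both hypotheses verified, Proposition~\ref{GKfam} gives $C_c^\infty(N_n\backslash P_n,X,\psi_n)\subseteq V$, which is the corollary.
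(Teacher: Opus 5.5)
Your proposal is correct and follows the paper's route. You apply Proposition~\ref{GKfam} with $A=X$, observe that $ev_p$ of the family Whittaker space is an ideal of $X$, and show it lies in no maximal ideal $m_\chi$ because its specialization at $\chi$ is the nonzero Whittaker space of $i_Q^G(\rho\otimes\chi)$. Your big-cell section is a useful addition: with its support shrunk so that $\psi_n$ is trivial there, its image under $J_X$ is a nonzero constant (up to a volume factor). This explicitly justifies the nonvanishing at every $\chi$ that the paper simply attributes to the genericity of $\rho$, and it already gives $ev_1(V)=X$ without the maximal-ideal step.
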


\begin{proof}
    By definition, the application of the previous result is almost straightforward. It suffices to show that for all $p\in P_n$, the ideal $ev_p(\mathcal{W}(i_Q^G(\rho_X), \psi_n))$ is equal to $X$. For all $\chi \in \mathcal{X}(L)$, denote by $m_\chi$ the corresponding maximal ideal in $X$, the specialization map 
    $$\mathrm{sp}_\chi : X \to X/m_\chi$$
    sends $\mathcal{W}(i_Q^G(\rho_X), \psi_n)$ to $\mathcal{W}(i_Q^G(\rho\otimes \chi), \psi_n)$ which is non-zero because $\rho$ is generic. Therefore, $ev_p(\mathcal{W}(i_Q^G(\rho_X), \psi_n))$ is not included in any of the maximal ideals of $X$, that is 
    $$ev_p(\mathcal{W}(i_Q^G(\rho_X), \psi_n)) = X.$$
    Finally, by Proposition \ref{GKfam}, we obtain 
    $$C_c^\infty(N_n\backslash P_n, X, \psi_n)\subset\mathcal{W}(i_Q^G(\rho_X), \psi_n)\subset C^\infty(N_n\backslash P_n, X, \psi_n).$$
\end{proof}

Thanks to Lemma \ref{Whittreg}, for all parabolic subgroup $Q=LU$ of $G$ and all $\rho\in \Irr{L}$ supercuspidal, the function 
$$\chi \in \mathcal{X}(L) \mapsto W_{f, i_Q^G(\rho\otimes \chi)}^1\vert_{P_n}$$
is regular. In particular, there exists a compact-open subgroup $K$ of $G$, such that for all $\chi \in \mathcal{X}(L)$
$$ W_{f, i_Q^G(\rho\otimes \chi)}^1\vert_{P_n} \in C^\infty(N_n\backslash P_n, \psi_n)^{P_n\cap K}.$$
Hence, thanks to Theorem VI.10.6 \cite{Ren}, we have

$$\pi \in \mathrm{Irr_{gen}}({G}) \mapsto W_{f, \pi}^1\vert_{P_n}$$
supported in a finite number of connected components of $\Omega(G)$. 

Let $\pi$ be an irreducible representation of $G$ of cuspidal support $[L, \rho]\in \Omega(G)$. Denote by 
$$\Omega_\rho \cong \mathcal{X}(L)/W(L,\rho),$$
the connected component of $[L, \rho]$, with 
$$W(L,\rho)=\{w\in W_G(L): \exists \chi \in \mathcal{X}(L), \ \rho^w = \rho \otimes \chi\}.$$

\begin{coro}\label{CoroGKfam}
    Let $X= \mathbb{C}[\mathcal{X}(L)]$ and $X'= X^{W(L, \rho)}$. There exists a function $f\in C_c^\infty(G)$, such that 
    $$W_{f, \pi_{X'}}^1 \in C_c^\infty(N_n\backslash P_n, X', \psi_n).$$   
\end{coro}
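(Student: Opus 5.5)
We will build $f$ by going backwards: first choose a compactly supported Whittaker function on the family level, then realize it as $W^1_{f,\pi_{X'}}$ for a suitable $f$. Put $\pi_X = i_Q^G(\rho_X)$. By Corollary \ref{CoroGKfamKirrilov}, the restriction to $P_n$ of $\mathcal{W}(\pi_X,\psi_n)$ contains $C_c^\infty(N_n\backslash P_n, X, \psi_n)$. Fix a nonzero $\phi_0 \in C_c^\infty(N_n\backslash P_n,\psi_n)$ and regard it as a constant element $\phi = \phi_0\otimes 1$ of $C_c^\infty(N_n\backslash P_n, X, \psi_n)$. Since $\phi$ does not depend on $\chi$, it is automatically $W(L,\rho)$-invariant and hence has values in $X'$. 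The corollary gives $v\in \pi_X$ with $\tilde J^{\psi_n}_{\rho_X}(v)\vert_{P_n} = \phi$. By Lemma \ref{SectionReg}, $v$ corresponds to a regular section $\chi\mapsto v_\chi$.

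Next we realize this $v$ through some $f$. The proof of Lemma \ref{Whittreg} gives the formula
$$W^1_{f,\pi_\chi} = \tilde J^{\psi_n}_{\rho_\chi}\Bigl(\sum_{u\in\mathcal B(\pi_\chi)} J^{\psi_n^{-1}}_{\rho^\vee\otimes\chi^{-1}}(u^\vee)\,\pi_\chi(f^\vee)u\Bigr).$$
The map $f\mapsto \sum_u J^{\psi_n^{-1}}(u^\vee)\pi_\chi(f^\vee)u$ is $\pi_\chi(f^\vee)$ applied to the vector dual to the Whittaker functional on $\pi_\chi^\vee$. So the task is to hit $v$ with operators $\pi_X(f^\vee)$, uniformly in $\chi$. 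For this we would use the family structure:

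\begin{enumerate}
\item The $(G,X)$-module $\pi_X$ is finitely generated.
\item The image of $C_c^\infty(G)\otimes X$ acting on $\pi_X^K$ is the full Hecke algebra image $\mathrm{End}_X$-type. This is the Bernstein-block description (Theorem VI.10.6 of \cite{Ren}, together with Bernstein's result that $\mathcal H(G,K)$ surjects onto $\mathrm{End}$ of the progenerator modulo the centre).
\item The element $\sum_u J(u^\vee)u$ is a regular nowhere-vanishing section, because the Jacquet functional on the dual family is nonzero at every $\chi$.
\end{enumerate}

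Together these should give a finite sum $\sum_i z_i\, \pi_X(f_i^\vee)$, with $z_i\in X$, mapping the Whittaker-dual section to $v$. The coefficients $z_i$ must then be absorbed into $f$. This uses that the Bernstein centre $\mathfrak Z(G)$ acts on the block through $X' = X^{W(L,\rho)}$. Consequently we must first average $v$ over $W(L,\rho)$, replacing it by its symmetrization, which still restricts to $\phi$ because $\phi$ is invariant. After that step, the coefficients can be taken in $X'$ and realized as convolution by elements of the Bernstein centre, which are compactly supported after cutting to $K$-biinvariants.

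\textbf{Main obstacle.} The hardest step is this passage from $X$-coefficients to an honest single $f\in C_c^\infty(G)$, that is, showing that $\{W^1_{f,\pi_{X'}}\vert_{P_n}\}$ is an $X'$-module containing the $W(L,\rho)$-invariant part of the Kirillov family. We would first try to use the intertwining operators realizing $W(L,\rho)$, which preserve Whittaker functionals up to regular factors, to prove that $W^1_{f,\pi_\chi}$ is $W(L,\rho)$-equivariant, so that the resulting function indeed has values in $X'$.
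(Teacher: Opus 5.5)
There is a genuine gap, and it is not only the step you flag as the main obstacle: the route leading up to it also fails.

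\textbf{Your step (2) is false in general.} Since $\pi_X^K$ is free of finite rank over $X$, suppose the $X$-span of the image of $\mathcal{H}(G,K)$ were all of $\mathrm{End}_X(\pi_X^K)$. Specializing at any $\chi$ would then make $\mathcal{H}(G,K)$ surject onto $\mathrm{End}_{\mathbb{C}}(\pi_\chi^K)$. That is impossible as soon as $\pi_\chi=i_Q^G(\rho\otimes\chi)$ is reducible and $K$ is small enough that two constituents have $K$-fixed vectors, because Hecke operators preserve subrepresentations. Such reducible points occur whenever $W(L,\rho)\neq 1$.

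Concretely, at such points the vectors $\sum_u J(u^\vee)\pi_\chi(f^\vee)u$ only fill the subrepresentation generated by the Whittaker-dual vector. Their Whittaker functions lie in $\mathcal{W}(\pi_\chi^+,\psi_n)$ by Proposition~\ref{Wpi}, which can be a proper subspace of $\mathcal{W}(\pi_\chi,\psi_n)$. So fixing a target $v$ first, via Corollary~\ref{CoroGKfamKirrilov} (a statement over $X$ that sees none of these constraints), and then hitting it by Hecke operators uniformly in $\chi$, is not supported by anything you prove. The descent from $X$-coefficients to $X'$-coefficients is then left open. Moreover, ``symmetrizing $v$ over $W(L,\rho)$'' is not defined, since $W(L,\rho)$ does not act regularly on the family $\pi_X$.

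\textbf{The missing idea.} You never need to prescribe a vector, nor to pass through $X$ at all: apply the family Gelfand--Kazhdan theorem directly over $X'$. Set $V=\{W^1_{f,\pi_{X'}}\vert_{P_n} : f\in C_c^\infty(G)\}$.
\begin{enumerate}
\item $V$ is $X'$-valued. The map $\chi\mapsto W^1_{f,i_Q^G(\rho\otimes\chi)}$ is regular by Lemma~\ref{Whittreg}, and it is $W(L,\rho)$-invariant by Proposition~\ref{Wpi}, since it depends only on the generic constituent, hence only on the cuspidal support. No intertwining operators are needed.
\item $V$ is a $(P_n,X')$-module for free. Right translation gives $R(p)W^1_f=W^1_{R(p)f}$. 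The algebra $X'=\mathbb{C}[\Omega_\rho]$ acts through the Bernstein centre, $z\cdot W^1_{f,\pi_{X'}}=W^1_{z\cdot f,\pi_{X'}}$, so no coefficient ever has to be absorbed into $f$.
\item $ev_p(V)=X'$. It is an ideal of $X'$, and it lies in no maximal ideal because at every point of $\Omega_\rho$ some $W^1_{f,\pi\otimes\chi}$ is nonzero and $V$ is $P_n$-stable.
\end{enumerate}
Proposition~\ref{GKfam}, applied with $A=X'$ (a finitely generated algebra), then gives $C_c^\infty(N_n\backslash P_n,X',\psi_n)\subseteq V$. Hence some $f$ has $W^1_{f,\pi_{X'}}\vert_{P_n}$ compactly supported; one can even take it equal to your constant $\phi$.
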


\begin{proof}
    Let 
    $$V:= \left\{W_{f, \pi_{X'}}^1 \vert_{P_n}, \ f \in C_c^\infty(G)\right\}\subset C^\infty(N_n\backslash P_n, X', \psi_n).$$
    To use the previous proposition, we show that $V$ is a $(P_n, X')$-module, such that for all $p\in P_n$, $ev_p(V)=X'$. Let $f\in C_c^\infty(G)$, for all $p\in P_n$, we have 
    $$R(p)W_{f, \pi_{X'}}^1 = W_{R(p)f, \pi_{X'}}^1,$$
    and $C_c^\infty(G)$ is stable under right translation by $P_n$. Recall that the action of $\mathfrak{Z}(G)$ on $C_c^\infty(G)$, seen as a $G\times G$-representation, identifies $\mathfrak{Z}(G)$ with the algebra of endomorphisms of $C_c^\infty(G)$ (\cite{BDKV}, 1; \cite{Ren}, I.1.7). In other words, for all $z\in \mathfrak{Z}(G)$ (that can be seen as a regular function on $\Omega(G)$), we have 
    $$\pi_{X'}(z.f) = z(\mathbf{Cs}(\pi))\pi_{X'}(f), \ \ \mathrm{where} \ z(\mathbf{Cs}(\pi))\in \mathbb{C}.$$
    Therefore, 
    $$X' = X^{W(L, \rho)} = \mathbb{C}[\mathcal{X}(L)/W(L, \rho)]= \mathbb{C}[\Omega_\rho],$$
    acts on $V$ by 
    $$z. W_{f, \pi_{X'}}^1 = W_{z.f, \pi_{X'}}^1,$$
    and this action commutes with the action of $P_n$. It remains to show that for all $p\in P_n$, 
    $$ev_p(V) = \{W_{f, \pi_{X'}}^1(p), \ f \in C_c^\infty(G)\}=X'.$$
    Thanks to Lemma \ref{Whittreg}, for all parabolic subgroup $Q=LU$ of $G$ and all $\rho\in \Irr{L}$ supercuspidal, the function 
    $$\chi \in \mathcal{X}(L) \mapsto W_{f, i_Q^G(\rho\otimes \chi)}^1\vert_{P_n}$$
    is regular. In particular, the following conditions are satisfied.
    \begin{enumerate}
        \item[-]  For all $p \in P_n$, the function 
        $$\chi \in \mathcal{X}(L) \mapsto W_{f, i_Q^G(\rho\otimes \chi)}^1(p)$$
        is regular.
        \item[-] Thanks to Proposition \ref{Wpi}, if a pair $(L', \rho' \otimes \chi')$ is conjugated to $(L, \rho \otimes \chi)$, then 
        $$W_{f, i_Q^G(\rho\otimes \chi)}^1 = W_{f, i_{Q'}^G(\rho'\otimes \chi')}^1.$$
    \end{enumerate}
    In other words, for all $p\in P_n$, $ev_p(V)$ is an ideal of $X'$, and for all $\chi \in \mathcal{X}(L)/W(L,\rho)$, 
    $$\mathrm{sp}_\chi(W_{f, \pi_{X'}}^1) = W_{f, \pi\otimes \chi}^1 \not = 0 $$
    for some $f\in C_c^\infty(G)$, meaning that $V$ is included in none of the maximal ideals of $X'$. Hence, for all $p\in P_n$, 
    $$ev_p(V) = \{W_{f, \pi_{X'}}^1(p), \ f \in C_c^\infty(G)\}=X'.$$
    Applying now Lemma \ref{GKfam}, we obtain 
    $$C_c^\infty(N_n\backslash P_n, X', \psi_n) \subset \left\{W_{f, \pi_{X'}}^1 \vert_{P_n}, \ f \in C_c^\infty(G)\right\}$$
    and hence, there exists a function $f\in C_c^\infty(G)$, such that 
    $$W_{f, \pi_{X'}}^1 \in C_c^\infty(N_n\backslash P_n, X', \psi_n).$$
\end{proof}

\subsection{A family version of the functional $\beta$}\label{PartBetaFam}

For any $p$-adic reductive group $G$, denote by $\mathcal{X}_{\mathrm{unit}}(G)$ the group of unitary and unramified characters of $G$. Set $G=G_n(E)$ and $H=G_n(F)$, and denote by $P_n$ the mirabolic subgroup of $G_n$. Let $Q=LU$ be a parabolic subgroup of $G$, and $\rho$ be a tempered and irreducible representation of $L$. For all $\mu \in \mathcal{X}(L)$, define
$$\begin{array}{cccccc}
    \beta^\rho_\mu : &  i_Q^G(\rho\otimes \mu) \otimes C_c^\infty (F^n) & \to &\mathcal{W}(i_Q^G(\rho\otimes \mu), \psi_n) \otimes C_c^\infty (F^n) & \to & \mathbb{C} \\
     & e \otimes \phi & \longmapsto & W_e\otimes \phi & \longmapsto & Z(1, W_e, \phi)
\end{array} $$
whenever 

$$Z(1, W_e, \phi)=\int_{N_n(F)\backslash G_n(F)} W_e(h) \phi(e_nh)\vert\det h \vert_Fdh$$
converges. Suppose $L=G_{n_1}(E)\times \cdots \times G_{n_k}(E)$, we identify $\mathfrak{a}^*_{L, \mathbb{C}}$ with $\mathbb{C}^k$, namely
$$\mathfrak{a}^*_{L, \mathbb{C}} = X^*(Z_L)\otimes_\mathbb{Z}\mathbb{C} \cong \mathbb{C}^k.$$
Combining Proposition 2.6.1 of \cite{RBPAsai}, applied to $\Tilde{J}_{\rho\otimes \mu}^{\psi_n}$, and Lemma 3.3.1 of loc. cit., we obtain that for all $\mu=(\mu_1, \dots, \mu_k)\in \mathfrak{a}^*_{L, \mathbb{C}}$, if for all $i=1, \dots, k$, $\Re(\mu_i)>-\frac{1}{2}$, the integral defining $\beta_\mu^\rho(e \otimes \phi)$ is absolutely convergent for all $e \otimes \phi \in i_Q^G(\rho\otimes \mu) \otimes C_c^\infty (F^n)$. The space $\mathcal{W}(i_Q^G(\rho\otimes \mu), \psi_n)$ is obtained via the Jacquet functional, namely it is the space $\Tilde{J}_{\rho\otimes \mu}^{\psi_n}(i_{Q}^G(\rho\otimes \mu))$. Recall that, in general
$$i_Q^G(\rho\otimes \mu)  \to \mathcal{W}(i_Q^G(\rho\otimes \mu), \psi_n)$$
is a surjection, but when is $i_Q^G(\rho\otimes \mu)$ irreducible, it is in fact an isomorphism.

\begin{definition}
    A regular section
    $$\mu \in \mathcal{X}(L) \mapsto W_\mu \in \mathcal{W}(i_Q^G(\rho\otimes \mu), \psi_n) ,$$
    is the composition of a regular section
    $$\mu \in \mathcal{X}(L) \mapsto e_\mu \in i_Q^G(\rho\otimes \mu),$$
    and the Jacquet functional $\Tilde{J}_{\rho\otimes \mu}^{\psi_n}$.

\end{definition}

\begin{prop} \label{BetaFam}
    Let $\phi \in C_c^\infty(F^n)$. Suppose $\rho$ is supercuspidal or induced by an irreducible and supercuspidal representation. Then, for every regular section 
    $$\mu \in \mathcal{X}(L) \mapsto W_\mu \in \mathcal{W}(i_Q^G(\rho\otimes \mu), \psi_n) ,$$
    the function $\mu \in \mathcal{X}_{\mathrm{unit}}(L) \mapsto \beta^\rho_\mu(W_\mu\otimes \phi)$ extends to a rational function on $\mathcal{X}(L)$, such that 
    $$\mu \in \mathcal{X}(L) \mapsto \frac{\beta^\rho_\mu(W_\mu\otimes \phi)}{L(1, i_Q^G(\rho\otimes \mu), \mathrm{As}^+)}$$
    is a regular function on $\mathcal{X}(L)$.
\end{prop}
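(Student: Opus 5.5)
The plan is to reduce the statement to the theory of Asai Rankin--Selberg integrals (Flicker's zeta integrals) in families, using the asymptotic expansion of Whittaker functions (Theorem \ref{asympW}) and the fact that the gcd of the Flicker integrals is the Asai $L$-factor $L(s,\pi,\mathrm{As}^+)$ \cite{Fli}.

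\emph{Step 1: expansion of a regular section.} Write $W_\mu=\Tilde{J}^{\psi_n}_{\rho\otimes\mu}(e_\mu)$ with $e_\mu$ a regular section. By Lemma \ref{SectionReg}, $e_\mu$ comes from an element of $i_Q^G(\rho_X)$, $X=\mathbb{C}[\mathcal{X}(L)]$, fixed by a compact open subgroup $K$ independent of $\mu$. The family of Jacquet functionals $J^{\psi_n}_{\rho_X}$ is regular in $\mu$. Hence $W_\mu(g)$ is a regular function of $\mu$ for each $g$, and the $W_\mu$ are uniformly $K$-invariant. I would then write the Iwasawa decomposition $h=uak$ on $H=G_n(F)$. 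The integrand of $Z(1,W_\mu,\phi)$ becomes $W_\mu(ak)\phi(e_nak)|\det a|\delta_{B_n(F)}^{-1}(a)$, integrated over $a\in A_n(F)$ and $k\in K_0\cap H$, with the $k$-integral a finite sum by $K$-invariance. Apply Theorem \ref{asympW} uniformly in the family. This is where the hypothesis that $\rho$ is supercuspidal or induced from supercuspidal enters: by Lemma \ref{expUnit} and the geometric lemma, the exponents of $i_Q^G(\rho\otimes\mu)$ along every standard parabolic are exactly the central characters $(\rho\otimes\mu)^w|_{Z}$. These vary monomially in $\mu$, and their number and the integer $l$ are bounded independently of $\mu$. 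The functions $\varphi$ in Theorem \ref{asympW} should also be chosen regular in $\mu$ with support bounded independently of $\mu$. I would obtain this by applying Theorem \ref{asympW} to the generic module $i_Q^G(\rho_X)$ over $X$, or by working with the Jacquet modules $r_P(i_Q^G\rho_X)$, which are finitely generated over $X$.

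\emph{Step 2: rationality.} With this expansion, $\beta^\rho_\mu(W_\mu\otimes\phi)$ for $\mu$ unitary is a finite sum of integrals over $(F^\times)^{\Delta_0}$ of products of a compactly supported smooth function with finite functions whose characters are monomials in $\mu$. Each such integral is a product of geometric series $\sum_{m\geq m_0}(c(\mu)q_F^{-1})^m$, together with polynomial-in-$m$ variants, so it is a rational function of $\mu$ whose denominators are products of factors $1-c(\mu)q_F^{-s}$ evaluated at $s=1$. This gives the meromorphic, indeed rational, continuation from $\mathcal{X}_{\mathrm{unit}}(L)$, where Proposition \ref{ZBeta} and absolute convergence for $\Re(\mu_i)>-\frac12$ identify it with the genuine integral.

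\emph{Step 3 (main obstacle): bounding the poles by the Asai factor.} We must show that $\beta^\rho_\mu(W_\mu\otimes\phi)/L(1,i_Q^G(\rho\otimes\mu),\mathrm{As}^+)$ is regular. The first approach I would try works pointwise. For fixed $\mu_0$, the Flicker integral $Z(s,W,\phi)$ on $\mathcal{W}(i_Q^G(\rho\otimes\mu_0))\otimes C_c^\infty(F^n)$ spans the fractional ideal $L(s,\cdot,\mathrm{As}^+)\mathbb{C}[q^{\pm s}]$; for induced-from-supercuspidal representations this is Matringe's computation of the Asai gcd. In the two-variable family $(s,\mu)$, the integral divided by $\prod_{w,i,j}L(s,\ldots)$ is then polynomial, where the product is the explicit product of Asai and Rankin--Selberg factors of the supercuspidal constituents, which equals $L(s,i_Q^G(\rho\otimes\mu),\mathrm{As}^+)$. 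To make this rigorous in families, I would match the denominators from Step 2 with these $L$-factors. Each geometric series corresponds to an exponent character $c(\mu)$ whose restriction to the diagonal torus of $H$ is trivial exactly when a factor of $L(1,\rho_i\otimes\mu_i,\mathrm{As}^+)$ or $L(1,\rho_i\otimes\rho_j^c\otimes\mu_i\mu_j^{c})$ has a pole. The cleanest route is probably to reduce to the rank-one exponents via Bernstein--Zelevinsky derivatives, following the proof that the gcd is the $L$-factor: the Kirillov model restricted to $P_n$ is controlled by derivatives, which here are induced from supercuspidals with explicitly computed central characters. I expect this bookkeeping, done uniformly in $\mu$ rather than at a fixed point, to be the heart of the proof. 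Regularity across points where the multiplicity of a pole jumps must then follow by rationality together with Hartogs-type extension on the smooth torus $\mathcal{X}(L)$, since the quotient is regular off a codimension $\geq 2$ set.
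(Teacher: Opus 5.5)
\textbf{The gap is in Step 3, and the mechanism you propose there fails.} The denominators produced in Step 2 only see exponents, i.e.\ central characters of constituents restricted to $F^\times$. Asai factors of supercuspidals, by contrast, see distinction.

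Take $L=G$ and $\rho$ supercuspidal with $n\geq 2$. Then $W_\mu|_{P_n}$ is compactly supported modulo $N_n$, and the only non-compact direction of the integrand is the centre. So the only denominator you get is the Tate factor, which vanishes whenever the central character of $\rho\otimes|\det|_E^{\mu+1/2}$ is trivial on $F^\times$. But $L(1+2\mu,\rho,\mathrm{As}^+)$ has a pole only when $\rho\otimes|\det|_E^{\mu+1/2}$ is actually $H$-distinguished, which is a much stronger condition. For example, let $E/F$ be unramified, $n$ even and $\omega_\rho|_{F^\times}=1$. Both $\mu=-\frac12$ and the $\mu$ with $q_F^{-(2\mu+1)}=-1$ pass the central-character test. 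They correspond to $H$-distinction and $(H,\eta_{E/F})$-distinction of $\rho$, which are mutually exclusive for supercuspidals.

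The spurious poles disappear only because the corresponding residue is an $H$-equivariant functional on a non-distinguished representation, hence zero. No bookkeeping of exponents or derivatives, uniform in $\mu$ or not, detects this. Moreover, your claim that the two-variable quotient ``is then polynomial'' does not follow from the fixed-$\mu_0$ gcd statement. Your Hartogs remark also presupposes regularity at the generic points of every divisor, which is exactly what has to be proved.

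\textbf{The missing idea is the paper's.} You never need to control poles in the whole family, only along the curves $\Delta_\mu=\{\mu\otimes|\det|_E^s,\ s\in\mathbb{C}\}$.
\begin{itemize}
\item Along $\Delta_\mu$ one has $W_{\mu\otimes|\det|_E^s}=|\det|_E^s\,W_{\mu,s}$. Here $W_{\mu,s}$ is a combination, with coefficients in $\mathbb{C}[q_E^{\pm s}]$, of finitely many fixed vectors of $\mathcal{W}(i_Q^G(\rho\otimes\mu),\psi_n)^K$.
\item Since $|\det h|_E=|\det h|_F^2$ on $H$, the quotient restricted to $\Delta_\mu$ is $Z(1+2s,W_{\mu,s},\phi)/L(1+2s,i_Q^G(\rho\otimes\mu),\mathrm{As}^+)$. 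This is regular by exactly the one-variable gcd property you already quote.
\item The lemma on normal varieties in the proof of Proposition~\ref{BetaFam} then gives regularity on $\mathcal{X}(L)$. It says that a rational function whose restriction to a curve through each point, not contained in any polar divisor, is regular, is itself regular.
\item The non-containment hypothesis holds because $\beta^\rho_\mu$ is given by a convergent, hence holomorphic, integral on $\Re(\mu_i)>-\frac12$. Any divisor stable under central twisting must meet this region.
\end{itemize}

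With this reduction, your Steps 1--2 are needed only for rationality. There the paper is much cheaper: it uses generic one-dimensionality of $\mathrm{Hom}_{G_n(F)}(\pi_\mu\otimes C_c^\infty(F^n),|\det|_F^{-1})$ (via Kable's derivative argument), one normalizing equation from Corollary~\ref{CoroGKfamKirrilov}, and Bernstein's principle. If you keep your route, note that the exponent components $\phi_{Q,\mu}$ of Theorem~\ref{asympW} cannot be chosen regular in $\mu$ across loci where exponents collide, since the spectral projections acquire poles there. You would instead have to sum operator-valued geometric series on the Jacquet modules over $X$.
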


To prove this proposition, we use Bernstein's theorem (Section 1, \cite{Banks}).

\begin{theo}[Bernstein]
    Let $\mathcal{D}$ be an algebraic variety over $\mathbb{C}$. Let $V$ be a complex vector space of countable dimension, $\Tilde{V}$ its linear dual, and 
    $$x_r \in V \otimes \mathbb{C}[\mathcal{D}], \ \ c_r\in\mathbb{C}[\mathcal{D}], \ \ r\in R$$
    with $R$ a set of indexes. Let $\mathcal{U}\subseteq \mathcal{D}$ be open (for the analytic topology) and non-empty. Suppose that for all $d\in \mathcal{U}$, the system 
    $$\lambda (x_r(d)) = c_r(d)$$
    admits a unique solution $\lambda_d\in \Tilde{V}$. Then for all $v\in V$, there exists a non-empty, Zariski-open subset $\mathcal{V}\subseteq \mathcal{D}$ such that the function 
    $$d\in \mathcal{V}\cap \mathcal{D} \mapsto \lambda_d(v)$$
    extends to a regular function on $\mathcal{V}$. In particular, it defines a rational function on $\mathcal{D}$.
\end{theo}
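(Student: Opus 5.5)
The statement is linear algebra over the function field, with a Baire category argument used to pass from the pointwise hypothesis on $\mathcal{U}$ to a single finite subsystem. First reduce to $\mathcal{D}$ irreducible by replacing it with an irreducible component meeting $\mathcal{U}$ in a nonempty analytic open set (one exists since there are finitely many components). Fix a countable basis $(e_i)_{i\geq 1}$ of $V$ and write $x_r=\sum_i a_{r,i}\,e_i$, where $a_{r,i}\in\mathbb{C}[\mathcal{D}]$ and only finitely many $a_{r,i}$ are nonzero for each $r$. A functional $\lambda\in\Tilde{V}$ is the same as the sequence $(\lambda(e_i))_i$. The system $\lambda(x_r(d))=c_r(d)$ has at most one solution exactly when the vectors $x_r(d)$, $r\in R$, span $V$: any $\lambda$ vanishing on that span would be a second solution after adding it to a given one.

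Next reduce the index set $R$ to a countable one. The $\mathbb{C}$-span of the pairs $(x_r,c_r)\in (V\oplus\mathbb{C})\otimes\mathbb{C}[\mathcal{D}]$ has countable dimension, since $\mathbb{C}[\mathcal{D}]$ has countable dimension when $\mathcal{D}$ is of finite type. So we may choose a countable subfamily $R'\subseteq R$ whose pairs form a $\mathbb{C}$-basis of this span. Every $(x_r,c_r)$ is then a finite combination, with \emph{constant} coefficients, of the $(x_{r'},c_{r'})$ for $r'\in R'$. These relations survive specialization at every $d$, so the system indexed by $R'$ has the same solutions as the full system at every point. Hence we may assume $R$ is countable.

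Now fix $v\in V$, say $v\in\mathrm{span}(e_1,\dots,e_N)$. For each finite subset $S\subseteq R$, let $A_S$ be the set of $d\in\mathcal{D}$ such that $e_1,\dots,e_N\in\mathrm{span}\{x_r(d): r\in S\}$. Let $M$ be large enough that all $x_r$ with $r\in S$ lie in $\mathrm{span}(e_1,\dots,e_M)\otimes\mathbb{C}[\mathcal{D}]$. Then $A_S$ is defined by rank conditions on the matrix $(a_{r,i}(d))_{r\in S,\, i\leq M}$ augmented by the $e_j$, so it is Zariski constructible. By the uniqueness hypothesis, $\mathcal{U}=\bigcup_S (A_S\cap\mathcal{U})$, which is a countable union. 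Since $\mathcal{U}$ is a Baire space, some $\overline{A_S}$ has nonempty analytic interior in $\mathcal{U}$. A constructible subset of an irreducible variety whose closure has nonempty analytic interior is Zariski dense, so $A_S$ contains a nonempty Zariski open $\mathcal{V}_0$. I expect this Baire step, together with the countability reduction it needs, to be the main obstacle; the remaining steps are routine.

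On $\mathcal{V}_0$, fix a subset $S'\subseteq S$ and a set of $|S'|$ coordinates among $1,\dots,M$ for which the corresponding minor $\Delta$ of $(a_{r,i})$ is not identically zero. Choose them so that, on $\mathcal{V}=\mathcal{V}_0\cap\{\Delta\neq 0\}$, the vectors $x_r$ ($r\in S'$) form a basis of $\mathrm{span}\{x_r(d):r\in S\}$ and the $e_j$ ($j\leq N$) are expressed through them. By Cramer's rule we get $e_j=\sum_{r\in S'} b_{j,r}\,x_r(d)$ with $b_{j,r}\in\mathbb{C}[\mathcal{D}][\Delta^{-1}]$. Therefore
$$\lambda_d(e_j)=\sum_{r\in S'} b_{j,r}(d)\,c_r(d),$$
and $\lambda_d(v)$ is a fixed linear combination of these. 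This is regular on $\mathcal{V}$ and agrees with the given values on $\mathcal{V}\cap\mathcal{U}$, so it defines a rational function on $\mathcal{D}$.
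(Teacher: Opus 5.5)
Your argument is correct. The paper gives no proof of its own; it simply cites Banks. The standard proof of Bernstein's theorem is organized around the function field $K=\mathbb{C}(\mathcal{D})$. One first shows that the system has a unique solution $\lambda_K\in\mathrm{Hom}_{\mathbb{C}}(V,K)$.
\begin{itemize}
\item Existence: clear denominators in a putative inconsistent finite relation ($\sum_r a_r x_r=0$, $\sum_r a_r c_r\neq 0$) and specialize at a point of $\mathcal{U}$.
\item Uniqueness: if some $e_i$ is not in the $K$-span of the $x_r$, then each locus $\{d : e_i\in\mathrm{span}\{x_r(d): r\in S\}\}$, with $S$ finite, lies in a proper Zariski-closed set. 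A point of $\mathcal{U}$ avoiding these countably many sets contradicts uniqueness there.
\end{itemize}
One then checks that $\lambda_K(v)$ is regular and equal to $\lambda_d(v)$ wherever the finitely many $K$-coefficients expressing $v$ through the $x_r$ are regular.

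Your Baire step is the contrapositive of that uniqueness step, and your Cramer's-rule step is the specialization step. The genuine difference is that you work one $v$ at a time and never build $\lambda_K$. As a result the existence-over-$K$ argument disappears, since solutions on $\mathcal{V}\cap\mathcal{U}$ are given by hypothesis. This is more elementary and gives exactly the statement as formulated. That is all Proposition~\ref{BetaFam} needs, because a regular section is a finite $\mathbb{C}[\mathcal{D}]$-combination of fixed vectors.

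What the function-field version buys is a single object $\lambda_K$. With it, the following come for free:
\begin{itemize}
\item linearity in $v$;
\item independence of the choices of $S$, $S'$, $\Delta$;
\item validity of every equation identically over $K$, hence of whatever equivariance the equations encode.
\end{itemize}
In your version these must be recovered afterwards via the identity principle: regular functions agreeing on a nonempty analytic open subset of an irreducible variety coincide.

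Two small bookkeeping points.
\begin{itemize}
\item The equivalence ``at most one solution iff the $x_r(d)$ span $V$'' holds only when a solution exists. That is the case on $\mathcal{U}$, and it is all you use.
\item After reducing to a component $D_i$, intersect the open set you obtain with $D_i\setminus\bigcup_{j\neq i}D_j$, so that $\mathcal{V}$ is Zariski open in $\mathcal{D}$. Your Cramer identity holds on all of $\mathcal{V}$, so agreement on $\mathcal{V}\cap\mathcal{U}$ is unaffected.
\end{itemize}
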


\begin{proof}[Proof of Proposition \ref{BetaFam}]
    First of all, if $\rho=i_P^L(\sigma)$ with $P=M_PU_P$ a parabolic subgroup of $L$ and $\sigma\in \mathrm{Irr}(M_P)$ supercuspidal, then 
    $$i_Q^G(\rho \otimes \mu) \cong i_{P}^G(\sigma \otimes \mu\vert_{M_P}).$$
    This implies that, up to replacing $\beta_\mu^\rho$ by $\beta_\mu^\sigma$, we can assume that $\rho$ is supercuspidal.

    Using the notation of Bernstein's theorem, set $\mathcal{D}= \mathcal{X}(L)$. For all $\mu \in \mathcal{X}(L)$, denote $\pi_\mu = i_Q^G(\rho \otimes \mu)$ and $\omega_{\pi_\mu}=\omega_\rho \otimes \mu\vert_{Z_G}$ its central character. Let $K_0$ be a maximal compact subgroup of $G$, recall that for all $\mu\in \mathcal{X}(L)$, $\pi_\mu$ is isomorphic to $ i_{Q\cap K_0}^{K_0}(\rho\vert_{L\cap K_0})$ as a vector space, set 
    $$V= i_{Q\cap K_0}^{K_0}(\rho\vert_{L\cap K_0}) \otimes C_c^\infty(F^n).$$
    We first remark that, for all $\mu \in \mathcal{X}_{\mathrm{unit}}(L)$
    $$\beta_\mu^\rho \in \mathrm{Hom}_{G_n(F)}(\pi_\mu \otimes C_c^\infty(F^n), \vert \det \vert_F^{-1}).$$
    From now on, we follow and adapt the proof of Theorem 1 in \cite{Kab}, to show that the space $\mathrm{Hom}_{G_n(F)}(\pi_\mu \otimes C_c^\infty(F^n), \vert \det \vert_F^{-1})$ is at most one-dimensional for $\mu$ in a non-empty Zariski-open subset of $\mathcal{X}(L)$. Let $H=G_n(F)$, the short exact sequence of $H$-representation,
    $$0 \longrightarrow C_c^\infty(F^n\backslash\{0\}) \longrightarrow C_c^\infty(F^n) \longrightarrow \mathbb{C} \to 0$$
    induces an exact sequence
    $$0 \to \mathrm{Hom}_H(\pi_\mu, \vert \det \vert_F^{-1}) \to \mathrm{Hom}_H(\pi_\mu \otimes C_c^\infty(F^n), \vert \det \vert_F^{-1}) \to \mathrm{Hom}_H(\pi_\mu \otimes C_c^\infty(F^n\backslash\{0\}), \vert \det \vert_F^{-1}).$$
    For all $\mu \in \mathcal{X}(L)$, such that 
    $$\mu\vert_{Z_H} \not = \vert \det \vert_F^{-1}\otimes \omega_\rho\vert_{F^\times}^{-1}$$
    we have
    $$\mathrm{Hom}_H(\pi_\mu, \vert \det \vert_F^{-1})  = 0.$$
    The set
    $$\mathcal{U}_0=\{\mu \in \mathcal{X}(L), \ \mu\vert_{Z_H} \not = \vert \det \vert_F^{-1}\otimes \omega_\rho\vert_{F^\times}^{-1}\}$$
    is Zariski-open. Moreover, 
    \begin{align*}
        \mathrm{Hom}_H(\pi_\mu \otimes C_c^\infty(F^n\backslash\{0\}), \vert \det \vert_F^{-1}) & \cong \mathrm{Hom}_H(\pi_\mu , i_{P_n(F)}^H(\mathbf{1})), \\
        & \cong \mathrm{Hom}_{P_n(F)}(\pi_\mu, \mathbf{1}).
    \end{align*}
    where $\mathbf{1}$ denotes the trivial character of $P_n(F)$. Let $\mu \in \mathcal{X}_{\mathrm{unit}}(L)$. Denote by
    $$\mathbf{\Psi}^- : \mathrm{Rep}(P_n(E)) \to \mathrm{Rep}(G_{n-1}(E)) \ , \ \mathbf{\Psi}^+:\mathrm{Rep}(G_{n-1}(E)) \to \mathrm{Rep}(P_{n}(E)) $$ 
    and 
    $$\mathbf{\Phi}^- : \mathrm{Rep}(P_n(E)) \to \mathrm{Rep}(P_{n-1}(E)) \ , \ \mathbf{\Phi}^+:\mathrm{Rep}(P_{n-1}(E)) \to \mathrm{Rep}(P_{n}(E))$$
    the derivative functors defined by \cite{BZ1} in Section 3. Using the results in Section 3.5 of \cite{BZ1}, $\pi_\mu$ as a representation of $P_n(E)$, $\pi_\mu$ admits a filtration 
    $$0=\pi_{n+1} \subset \pi_n \subset \cdots \subset \pi_2 \subset \pi_1 = \pi_\mu,$$
    such that, for all $j \in \{1, \dots , n\}$
    $$\pi_j/ \pi_{j+1} \cong \mathbf{\Phi}^{j-1} \circ \mathbf{\Psi}^+(\pi^{(j)}),$$
    where $\pi^{(j)}$ denotes the j-th derivative. Let us show that the natural map
    $$\mathrm{Hom}_{P_n(F)}(\pi_\mu, \mathbf{1}) \to \mathrm{Hom}_{P_n(F)}(\pi_n, \mathbf{1})$$
    is injective for $\mu$ in general position. Indeed, let $T$ be an element in the kernel of the above map. If $T$ is non-zero, then there exists $j \in \{1, \dots, n-1\}$ such that
    $$T \vert_{\pi_j} \not = 0, \ \mathrm{but} \ T\vert_{\pi_{j+1}} = 0.$$
    Hence, $T$ induces a non trivial element in 
    $$\mathrm{Hom}_{P_n(F)}(\pi_j/\pi_{j+1}, \mathbf{1}).$$
    Applying repeatedly Proposition 1 in \cite{Kab}, we obtain 
    $$\mathrm{Hom}_{P_n(F)}(\pi_j/\pi_{j+1}, \mathbf{1}) \cong \mathrm{Hom}_{P_{n-j+1}(F)}(\mathbf{\Psi}^+(\pi^{(j)}), \mathbf{1}).$$
    By definition of $\mathbf{\Psi}^+$ (Section 3.1, \cite{BZ1}), it is also isomorphic to 
    $$\mathrm{Hom}_{G_{n-j}(F)}(\pi^{(j)}, \mathbf{1}).$$

    We define 

    $$U_j := \begin{pmatrix}
        I_{n-j} & * \\ & N_j
    \end{pmatrix},$$
    and we see $\psi_j$ as a character of $U_j$ by composing with the projection from $U_j$ to $N_j$. The twisted Jacquet module 
    $$\pi_{U_j, \psi_j}= \mathbf{j}_{\psi_j}(\pi_\mu) =  \pi_\mu/\pi_\mu(U_j, \psi_j)$$
    is isomorphic to $\pi^{(j)}$. Set 
    $$V_j = \begin{pmatrix}
        I_{n-j} & * \\
         & I_j
    \end{pmatrix}.$$
    We also have 
    $$\pi^{(j)}\cong \mathbf{j}_{\psi_{j}}(\pi_{V_j}) = (\pi_{V_j})_{N_{j}, \psi_{j}}.$$
    Thanks to the geometric lemma \cite{Ren}, VI.5.1, the representation $\pi_{V_j}$ admits a filtration such that each subquotient admits a central character for the Levi $G_{n-j}(E)\times G_j(E)$. We denote by $\mathcal{W}_j$ a set of representative of the double cosets $Q\backslash G /Q'$, where $Q'$ is the parabolic subgroup with Levi $L_j=G_{n-j}(E)\times G_j(E)$ and unipotent radical $V_j$. Suppose $L=G_{n_1}(E)\times \cdots \times G_{n_k}(E)$. We identify $\mathcal{W}_j$ with the set of partitions $I_1\sqcup I_2 =\{1, \dots, k \}$ such that
    $$\sum_{i\in I_1}n_i=n-j, \ \mathrm{and } \ \sum_{i\in I_2}n_i=j,$$
    and for all $w\in \mathcal{W}_j$ we have
    $$w^{-1}Lw = \prod_{i\in I_1}G_{n_i}(E)\times \prod_{i\in I_2}G_{n_i}(E).$$
    Since $\rho$ is supercuspidal, the irreducible subquotients of $\pi_{V_j}$ arising from the geometric lemma are of the form 
    $$i_{L_j\cap w^{-1}Lw}^{L_j}\left(\rho\otimes \mu \right)^w, \  w \in \mathcal{W}_j$$
    We denote by 
    $$\omega_\rho \otimes \mu\vert_{Z_L} = (\omega_1 \mu_1, \dots, \omega_k \mu_k): (E^\times)^k \longrightarrow \mathbb{C}^\times$$
    the central character of $\rho\otimes\mu$. Therefore, the central characters of the irreducible subquotients of $\pi_{V_j}$ are characters of $E^\times \times E^\times$ of the form
    $$(\omega_\rho \otimes \mu\vert_{Z_L})^w = \left(\prod_{i\in I_1}\omega_{i} \mu_{i},\prod_{i\in I_2}\omega_{i} \mu_{i}\right), \ w \in \mathcal{W}_j.$$
    The central characters of $\pi^{(j)}$ correspond to the first entry
    $$\prod_{i\in I_1}\omega_{i} \mu_{i}.$$

    \bigskip 
    
    For all $w \in \mathcal{W}_{j}$, we denote by $\mathcal{U}_w$ the Zariski-open set of $\mu \in \mathcal{X}(L)$, such that 
    $$\prod_{i\in I_1}\mu_{i}\vert_{F^\times} \not = \left(\prod_{i\in I_1}\omega_{i}\vert_{F^\times}\right)^{-1}.$$
    Then for all $\mu\in \bigcup_{w\in \mathcal{W}_{j}} \mathcal{U}_w$, we have 
    $$\mathrm{Hom}_{G_{n-j}(F)}(\pi^{(j)}, \mathbf{1})=0.$$
    which establish the injectivity of the map 
    $$\mathrm{Hom}_{P_n(F)}(\pi_\mu, \mathbf{1}) \to \mathrm{Hom}_{P_n(F)}(\pi_n, \mathbf{1}).$$
    Using again Proposition 1 in \cite{Kab} and the definition of $\mathbf{\Psi}^+$,
    $$\mathrm{Hom}_{P_n(F)}(\pi_n, \mathbf{1}) \cong \mathrm{Hom}_{G_0(F)}(\pi^{(n)}, \mathbf{1}).$$
    The dimension of this last homomorphism space is one because it is the dimension of $\pi^{(n)}$, as $G_0(F)$ is the trivial group. Consequently, the space $\mathrm{Hom}_{P_n(F)}(\pi_\mu, \mathbf{1})$ and hence, the space 
    $$\mathrm{Hom}_{G_n(F)}(\pi_\mu \otimes C_c^\infty(F^n), \vert \det \vert_F^{-1}),$$
    are at most one-dimensional.
    
    Returning to the proof of the lemma, denote by $\mathcal{B}$ a basis of the vector space $V$. For all $e\otimes \phi \in \mathcal{B}$ there exists a compact-open subgroup $K_e\subset K_0$ of $G$, such that 
    $e \in \pi_\mu^{K_e}$, and $ H/(H\cap K_e)$ is a countable set, denote 
    $$R = \{(e \otimes \phi, h): \ e\otimes \phi \in \mathcal{B}, \ h \in H/(H\cap K_e)\}.$$ 
    Set $x_{(e \otimes \phi, h)}(\mu) = \pi_\mu(h)e \otimes R(h)\phi-\vert \det h\vert_F^{-1}.e \otimes \phi$. We have $x_{(e \otimes \phi, h)} \in V \otimes \mathbb{C}[\mathcal{D}]$. For all $\mu \in \mathcal{X}_{\mathrm{unit}}(L)$, the system
    $$L_\mu(x_{(e \otimes \phi, h)}(\mu) ) =0, \ (e \otimes \phi, h) \in R, \ L_\mu \in \Tilde{V}$$
    admits $\alpha\beta_\mu^\rho$ as solutions, with $\alpha \in \mathbb{C}$. In order to apply Bernstein's theorem we add another equation to system, to obtain $\beta_\mu^\rho$ as the unique solution. 
    
    Let $f \in C_c^\infty(N_n(E)\backslash P_n(E), \mathbb{C}[\mathcal{D}], \psi_n)$, such that
    $$\int_{N_n(F)\backslash P_n(F)}f(p)dp \not = 0.$$
    Thanks to Corollary \ref{CoroGKfamKirrilov}, there exists a regular section 
    $$e': \mu \in \mathcal{D}\mapsto e'(\mu) \in i_{Q \cap K_0}^{K_0}(\rho)^J,$$
    with $J$ a compact-open subgroup of $G$, such that
    $$W_{e'(\mu)}\vert_{P_n(E)} = \mathrm{sp}_\mu(f).$$
    Choosing $\phi'\in C_c^\infty(F^n)$ with support in $e_n(J \cap H)$, with $e_n = (0, \dots, 0, 1)$, and such that $\phi'(0)\not = 0$, we then have 
    $$Z(1, W_{e'(\mu)}, \phi') = \phi'(0) \int_{N_n(F)\backslash P_n(F)}W_{e'(\mu)}(p) dp \not = 0$$
    convergent for all $\mu \in \mathcal{D}$. Hence, 
    $$\mu \mapsto Z(1, W_{e'(\mu)}, \phi') \in \mathbb{C}[\mathcal{D}].$$  
    For all $\mu \in \mathcal{X}_{\mathrm{unit}}(L)$, the system 
    $$L_\mu(x_{(e \otimes \phi, h)}(\mu) ) =0, \ (e \otimes \phi, h) \in R,$$
    $$L_\mu(e'(\mu)\otimes \phi') = Z(1, W_{e'(\mu)}, \phi')$$
    admits a unique solution, $\beta_\mu^\rho$. Applying Bernstein's theorem, with $\mathcal{U}$ be the open set defined by the intersection 
    $$\mathcal{U} = \mathcal{U}_0\cap (\bigcup_{w\in \mathcal{W}_{j}}\mathcal{U}_w)\cap\left\{\mu=(\mu_1, \dots, \mu_k)\in \mathcal{X}(L), \ \Re(\mu_i)>-\frac{1}{2}\right\},$$
    we obtain that for every regular section $\mu \in \mathcal{X}(L) \mapsto W_\mu$,
    $$(\mu \in \mathcal{X}(L) \mapsto \beta_\mu^\rho(W_\mu\otimes \phi')) \in \mathbb{C}(\mathcal{X}(L)).$$
    It remains to show that 
    $$\mu \in \mathcal{X}(L) \mapsto \frac{\beta^\rho_\mu(W_\mu\otimes \phi')}{L(1, i_Q^G(\rho\otimes \mu), \mathrm{As}^+)}$$
    is a regular function on $\mathcal{X}(L)$. To do so, we use the following lemma.
    
    \begin{lem}
        Let $X$ be a complex, affine, irreducible and normal algebraic variety. We denote by $\mathbb{C}[X]$ the algebra of regular functions on $X$ and $\mathbb{C}(X)$ its field of fractions. Let $(\Delta_x)_{x\in X}$ be a family of closed and irreducible subvarieties of $X$, all of dimension one such that $x\in \Delta_x$, for all $x\in X$. If $f\in \mathbb{C}(X)$ is such that for all $x\in X$, none of the polar divisors of $f$ contains $\Delta_x$ and $f\vert_{\Delta_x}\in \mathbb{C}[\Delta_x]$, then $f\in \mathbb{C}[X]$.
    \end{lem}

    \begin{proof}
        For any prime ideal $P$ of $\mathbb{C}[X]$ denote by $V(P)$ the subvariety associated to $P$. Let $P$ be a prime ideal such that $V(P)$ is of codimension 1, the localization by $P$ is denoted by
        $$\mathbb{C}[X]_P=(\mathbb{C}[X]\backslash P)^{-1}.\mathbb{C}[X]\subset \mathbb{C}(X).$$
        Since $X$ is normal, we have 
        $$\mathbb{C}[X] = \bigcap_{P} \mathbb{C}[X]_P,$$
        where $P$ ranges over prime ideals of height one. Therefore it suffices to show that for all prime ideal $P$ of height one, 
        $$f\in \mathbb{C}[X]_P.$$
        Let $U_f=X \backslash \overline{\{x \in X, \ f(x)=0\}}$ be the fundamental open set associated to $f$. Let $P$ be a prime ideal of $\mathbb{C}[X]$, with $V(P)$ of codimension 1, such that $v_P(f)<0$. Let $x\in V(P) \cap U_f$, and $m_x\subset \mathbb{C}[X]$ the prime ideal corresponding to $x$, then we have $P\subset m_x$ and $P_x\subset m_x$ with $P_x$ the prime ideal corresponding to $\Delta_x$. Let $g=f^{-1}$, we then have $g(x)=0$, and we show that $g\vert_{\Delta_x}(x)=0$. As $x\in V(P) \cap U_f$, we have $g(x)=0$, or equivalently $g\in m_x.\mathbb{C}[X]_{m_x}$ (the unique maximal ideal of $\mathbb{C}[X]_{m_x}$). We have a projection 
        $$ \mathbb{C}[X]_{P_x} \longrightarrow \mathbb{C}[X]_{P_x}/P_x.\mathbb{C}[X]_{P_x} = \mathbb{C}(\Delta_x).$$
        And $m_x.\mathbb{C}[X]_{m_x}\subset \mathbb{C}[X]_{P_x}$. We denote $\overline{m_x}= m_x/P_x$ the maximal ideal corresponding to $x$ in $\mathbb{C}[\Delta_x]=\mathbb{C}[X]/P_x$. We have $\overline{m_x}.\mathbb{C}[\Delta_x]_{\overline{m_x}}\subset \mathbb{C}(\Delta_x)$, and this ideal is the image of $m_x.\mathbb{C}[X]_{m_x}$ by the above projection. We deduce that $g\vert_{\Delta_x}(x)=0$ and hence that $g\vert_{\Delta_x}\in \mathbb{C}[\Delta_x]_{\overline{m_x}}\backslash \mathbb{C}[\Delta_x]_{\overline{m_x}}^\times$. As $\mathbb{C}[\Delta_x]_{\overline{m_x}}$ is a local and integral ring, we have $g^{-1}=f\not \in\mathbb{C}[\Delta_x]_{\overline{m_x}}$, which means that $f$ has a pole in $\Delta_x$ and this contradicts the hypothesis $f\vert_{\Delta_x}\in \mathbb{C}[\Delta_x]$.
    \end{proof}
    We apply this lemma, with $\Delta_\mu=\{\mu_s= \mu \otimes \vert \det \vert_E^s, \ s \in \mathbb{C}\}$, for all $\mu \in \mathcal{X}(L)$. Recall that 
    $$i_Q^G(\rho\otimes \mu_s) \cong i_Q^G(\rho\otimes \mu)\otimes \vert\det\vert_E^s.$$
    Hence,
    $$\mathcal{W}(i_Q^G(\rho\otimes \mu_s), \psi_n) \cong \mathcal{W}(i_Q^G(\rho\otimes \mu), \psi_n)\otimes \vert \det \vert_E^s,$$
    implying that $W_{\mu_s}$ can be written $\vert \det \vert_E^sW_{\mu, s}$ with $W_{\mu, s}\in \mathcal{W}(i_Q^G(\rho\otimes \mu), \psi_n)$. Remark that for all $s \in \mathbb{C}$
    $$\frac{\beta^\rho_{\mu_s}(\vert\det\vert_E^sW_{\mu,s}\otimes \phi)}{L(1, i_Q^G(\rho\otimes \mu_s), \mathrm{As}^+)}= \frac{Z(1+2s, W_{\mu,s}, \phi)}{L(1+2s, i_Q^G(\rho\otimes \mu), \mathrm{As}^+)}$$
    is indeed regular. This means that to show the regularity of the function 
    $$\mu_s \in \Delta_\mu \mapsto \frac{\beta^\rho_{\mu_s}(W_{\mu_s}\otimes \phi)}{L(1, i_Q^G(\rho\otimes \mu_s), \mathrm{As}^+)}$$
    it suffices to show that 
    $$s \in \mathbb{C} \mapsto\frac{Z(s, W_{\mu,s}, \phi)}{L(s, i_Q^G(\rho\otimes \mu), \mathrm{As}^+)}$$
    is regular. Since 
    $$s\in \mathbb{C}\mapsto W_{\mu,s}\in \mathcal{W}(i_Q^G(\rho\otimes\mu), \psi_n)$$ 
    is a regular section, there exists a compact-open subgroup $K$ of $G$ such that for all $s\in \mathbb{C}$, 
    $$W_{\mu, s}\in \mathcal{W}(i_Q^G(\rho\otimes \mu), \psi_n)^K.$$
    Therefore, there exists a basis $\mathcal{B}_K$ of $ \mathcal{W}(i_Q^G(\rho\otimes \mu), \psi_n)^K$ such that $W_{\mu,s}$ can be written as a linear combination
    $$W_{\mu,s} = \sum_{W \in \mathcal{B}_K}\lambda_W(s)W,$$
    with $s\mapsto \lambda_W(s)$ a regular function. Finally, by definition of the Asai $L$-function the function 
    $$s \in \mathbb{C} \mapsto\frac{Z(s, W_{\mu,s}, \phi)}{L(s, i_Q^G(\rho\otimes \mu), \mathrm{As}^+)}$$
    is indeed regular.
\end{proof}

\begin{prop}\label{BetaProportional}
    Let $Q =LU$ be a standard parabolic subgroup of $G$. Let $\rho$ be an irreducible supercuspidal and unitary representation of $L$. Let $\lambda=(\lambda_1, \dots, \lambda_k) \in \mathcal{X}(L)$ be real and anti-dominant with respect to $Q$, i.e. $\lambda_1\leq \cdots \leq \lambda_k$. The space $$\mathrm{Hom}_H(i_Q^G(\rho\otimes \lambda), \mathbb{C})$$ is at most one-dimensional.
\end{prop}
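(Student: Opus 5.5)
We will bound $\dim \mathrm{Hom}_H(i_Q^G(\rho\otimes\lambda),\mathbb{C})$ with the geometric (Bernstein--Zelevinsky / Blanc--Delorme) analysis of the double cosets $Q\backslash G/H$, not with the family argument of Proposition \ref{BetaFam}. The group $H$ acts on $Q\backslash G$ with finitely many orbits. Choose an ordering of the orbits compatible with the closure relation, with open orbits first. The restriction of $\pi_\lambda=i_Q^G(\rho\otimes\lambda)$ to $H$ then has a filtration whose graded pieces are
\[
\mathrm{ind}_{H\cap x^{-1}Qx}^{H}\bigl((\delta_Q^{1/2}\rho\otimes\lambda)^x\bigr), \qquad x\in Q\backslash G/H,
\]
where $\mathrm{ind}$ is compact induction. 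We therefore get
\[
\dim \mathrm{Hom}_H(\pi_\lambda,\mathbb{C})\le \sum_x \dim \mathrm{Hom}_{H\cap x^{-1}Qx}\bigl((\delta_Q^{1/2}\rho\otimes\lambda)^x,\ \delta_x\bigr),
\]
where $\delta_x$ is the appropriate ratio of modular characters. By Frobenius reciprocity each term on the right reduces to a question about $M_x=L\cap xHx^{-1}$, namely whether the Jacquet module of $\rho\otimes\lambda$ along $L\cap xQ'x^{-1}$ is distinguished by the Levi part, twisted by a modular character.

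The first step is to parametrize the orbits using Offen's description of $Q\backslash G/H$ for $\mathrm{GL}_n(F)\backslash\mathrm{GL}_n(E)$. Each orbit corresponds to an involution $w$ in the Weyl group normalizing $L$ in a suitable sense, together with the associated twisted involution on $L$. Since $\rho$ is supercuspidal, every proper Jacquet module of $\rho$ vanishes. Hence only those $x$ contribute for which $x^{-1}Hx\cap L$ is the fixed-point group of an involution of $L$ exchanging blocks. Such an involution sends $G_{n_i}(E)$ either to itself, with fixed points $G_{n_i}(F)$, or to $G_{n_{w(i)}}(E)$, with fixed points the twisted diagonal $\{(g,g^c)\}$.

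For a contributing $x$, the local condition requires two things:
\begin{enumerate}
\item each block with $w(i)=i$ is $G_{n_i}(F)$-distinguished after twisting by $\lambda_i$ and by the modular character;
\item each pair with $w(i)=j\ne i$ satisfies $\rho_j\otimes\lambda_j\cong(\rho_i\otimes\lambda_i)^{*}\otimes\delta$, where $\delta$ is a real unramified character determined by the modular characters.
\end{enumerate}
Since $\rho$ is unitary and $\lambda$ is real, comparing real parts of central characters forces strong constraints. For a fixed block, $\omega_{\rho_i}\lambda_i$ must be trivial on $F^\times$ up to the modular twist. For a pair $(i,j)$ we need $\lambda_i+\lambda_j=c_x(i,j)$, where $c_x$ comes from $\delta_Q$ and $\delta_x$.

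The next step is to show that for antidominant $\lambda$ ($\lambda_1\le\cdots\le\lambda_k$) these modular shifts have signs that can be satisfied only on the open orbits. On the open orbits $\delta_x$ is trivial and the condition is exactly distinguishedness of the inducing data. This is the step I expect to be the main obstacle. We will need to compute $\delta_Q$ restricted to $H\cap x^{-1}Qx$ against the modular character of that group, and verify that its exponent pairs with $\lambda$ with a definite sign. The natural approach is to follow Blanc--Delorme and Offen's treatment of the "closed orbit" contributions. We would also exploit that, on a non-open orbit, the involution $w$ moves some index past another in the order, so the contributing shift has the form $\lambda_i-\lambda_j>0$ for some $i<j$, which contradicts antidominance.

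Finally, we would count the open-orbit contributions that survive. Among these, the compatibility conditions are rigid. An unordered pairing $\{i,j\}$ requires $\rho_j\cong\rho_i^*$ together with $\lambda_i+\lambda_j=0$, and fixed blocks require $\rho_i$ to be $G_{n_i}(F)$-distinguished with $\lambda_i=0$. Since each supercuspidal $\rho_i$ has a one-dimensional distinguishing functional, multiplicity one holds in each surviving term. The remaining task is to show that at most one orbit contributes, or else that contributions from distinct open orbits can be merged via standard intertwining operators. One would hope to choose the ordering so that the antidominance $\lambda_1\le\cdots\le\lambda_k$ pins down a unique involution, but this is not automatic: several pairings can satisfy the compatibility conditions at once. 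In the degenerate cases, such as when several $\lambda_i$ are equal or several $\rho_i$ are conjugate-dual to one another, we would fall back on Offen's result that for such data the relevant orbits give the same functional up to scalar, obtained by meromorphic continuation of open-orbit intertwining periods. That yields $\dim\le 1$.
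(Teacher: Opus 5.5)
There is a genuine gap, and it sits exactly at the step you flag. You work with the original parabolic $Q$, whose blocks are the supercuspidal $\rho_i$. There your orbit-by-orbit bound is additive, and it really does exceed $1$ in cases that antidominance allows.

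Take $n=2$, $L$ the diagonal torus, $\rho=\mathbf{1}\boxtimes\mathbf{1}$ and $\lambda=(0,0)$.
\begin{itemize}
\item The closed orbit $x=1$ contributes. Indeed $\delta_{B(E)}^{1/2}|_{B(F)}=\delta_{B(F)}$, so integration over $B(F)\backslash H$ is an $H$-invariant functional.
\item The open orbit contributes as well. Its stabilizer maps onto the twisted diagonal $\{(t,t^c)\}\subset L$, and $\mathbf{1}^*\cong\mathbf{1}$.
\end{itemize}
So your bound is $2$. But $\mathbf{1}\times\mathbf{1}$ is irreducible, and its multiplicity is $1$ by Flicker's theorem.

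The same example refutes the mechanism you propose. For this Galois pair the modular twists cancel on every orbit. The only constraints are therefore $\lambda_i=0$ on fixed blocks and $\lambda_i+\lambda_{w(i)}=0$ on exchanged pairs. There is no signed shift for antidominance to violate: when $\lambda_i=\lambda_j$, a shift $\lambda_i-\lambda_j$ is $0$, not positive. Non-open orbits, even the closed one, do contribute.

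Your fallback does not close the gap either. Meromorphic continuation of open-orbit intertwining periods \emph{produces} invariant functionals but gives no upper bound. Functionals living on different graded pieces of the filtration cannot be ``identified up to scalar''.

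The missing idea, which is how the paper argues, is to coarsen the parabolic before applying the geometric lemma.
\begin{itemize}
\item Antidominance is used only to make blocks with equal exponent consecutive. Let $\mu_1<\cdots<\mu_l$ be the distinct values of the $\lambda_i$, and let $\Sigma_j$ be the product of the $\rho_i$ with $\lambda_i=\mu_j$.
\item Each $\Sigma_j$ is irreducible, being a product of unitary supercuspidals. Moreover $i_Q^G(\rho\otimes\lambda)\cong i_{Q'}^G(\Sigma_1|\det|^{\mu_1}\boxtimes\cdots\boxtimes\Sigma_l|\det|^{\mu_l})$.
\item Apply the geometric lemma to $Q'\backslash G/H$, with double cosets indexed by symmetric matrices $a$ with row sums $d_i$. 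By Lemma \ref{expUnit}, the real parts of the exponents are given by the $\mu_j$. Hence $a_{i,j}\neq 0$ forces $\mu_i+\mu_j=0$, and $\mu_i=0$ if $i=j$.
\item Because the $\mu_j$ are distinct, each row of $a$ has a single nonzero entry. Only the antidiagonal coset survives, and for it $M_u=M$.
\item The surviving space is
\[
\mathrm{Hom}_{H_{d_\alpha}}(\Sigma_\alpha,\mathbb{C})\otimes\bigotimes_i\mathrm{Hom}_{G_{d_i}^{\mathrm{diag}^c}}\bigl(\Sigma_i|\det|^{\mu_i}\boxtimes\Sigma_{l-i+1}|\det|^{-\mu_i},\mathbb{C}\bigr).
\]
The first factor is at most one-dimensional by the Gelfand-pair property of $(\mathrm{GL}_d(E),\mathrm{GL}_d(F))$, and the others by Schur's lemma.
\end{itemize}
The degeneracies that break your count (repeated exponents, repeated conjugate-dual or distinguished blocks) are absorbed into the irreducible $\Sigma_j$. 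They are then handled by multiplicity one for irreducible representations, not by counting orbits.
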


\begin{proof}
  
  To simplify the notation, for all integers $r$, set $G_r=\mathrm{GL}_r(E)$ and $H_r=\mathrm{GL}_r(F)$.
  Write $L = G_{n_1}\times \cdots \times G_{n_k}$, with $(n_1, \dots, n_k)$ a partition of $n$, and 
  $$\rho\otimes \lambda = \rho_1\vert\det\vert_E^{\lambda_1} \boxtimes \cdots \boxtimes \rho_k\vert\det\vert_E^{\lambda_k}$$
  with $\rho_i$ irreducible unitary and supercuspidal and $\lambda_i\in \mathbb{R}$. Let $\{\mu_1, \dots, \mu_l\}$ be the set of distinct elements of $\{\lambda_1, \dots, \lambda_k\}$, hence $\mu_1<\cdots<\mu_l$. For all $j\in \{1, \dots, l\}$, we define 
  $$\Sigma_j = \bigtimes_{\underset{\lambda_i = \mu_j}{1\leq i\leq k,}} \rho_{i} \in \Irr{G_{\sum n_{i,j}}}$$
  where $n_{i,j} = n_j$ if $\lambda_j=\lambda_i$ (otherwise $n_{i,j}=0$); so that 
  $$i_Q^G(\rho\otimes \lambda) = i_{Q'}^G(\Sigma_1\vert \det\vert^{\mu_1}\boxtimes \cdots \boxtimes \Sigma_l\vert \det\vert^{\mu_l})  $$
  with $\mu_1<\cdots<\mu_l$, and $Q' = MU'$ the standard parabolic subgroup such that 
  $$M = G_{d_1}\times \cdots \times G_{d_l}, \ \mathrm{with} \ \  d_i =\sum_{j=1}^k n_{i,j}.$$ 

  We now use Proposition 5.4 of \cite{LgeoMatringe}. Following this paper, we parametrize the double cosets in $Q'\backslash G/H$ by symmetric matrices $a=(a_{i,j})_{i,j}$ of size $l$ and with non-negative integer coefficients, such that for all $i\in \{1, \dots, l\}$, $\sum_{j=1}^la_{i,j}=d_i$. Every line of the matrix $a$ defines a partition of $d_i$, and we set 
  $$M_u=M_{u,1}\times \cdots \times M_{u,l},$$
  where each $M_{u,i} = G_{a_{i,1}}\times \cdots \times G_{a_{i,l}}$ is a Levi subgroup of $G_{d_i}$. Again, according to \cite{LgeoMatringe}, we let $M_u^u$ be the subgroup

  $$M_u^u= \prod_{i=1}^lH_{a_{i,i}}\times \prod_{i<j}G_{a_{i,j}}$$
  embedded in $M_u$ through the map that sends an element $((h_i)_{i=1}^l, (g_{i,j})_{i<j})\in M_u^u$ to the element $m \in M_u$ whose coordinates verify
  $$ m_{i,j} = \left\{\begin{array}{cc}
       h_i, &  \mathrm{if} \ i=j\\
       g_{i,j}, & \mathrm{if} \ i<j \\
       g_{i,j}^c, & \mathrm{if} \ i>j
  \end{array}\right.$$
  The geometric lemma, proposition 5.4 of \cite{LgeoMatringe}, provides an injection of $\mathrm{Hom}_H(i_Q^G(\rho\otimes \lambda), \mathbb{C})$ into 
  $$\prod_{u\in Q'\backslash G/H} \mathrm{Hom}_{M_u^u}(r_{M_u}^M(\Sigma_1\vert \det\vert^{\mu_1}\boxtimes \cdots \boxtimes \Sigma_l\vert \det\vert^{\mu_l}), \mathbb{C}).$$
  We parametrize the real unramified characters of $M_u$ by 
  $$\bigoplus_{\underset{a_{i,j \not = 0}}{1\leq i,j \leq l}}\mathbb{R},$$
  where $\lambda = (\lambda_{i,j})_{1\leq i,j \leq l}$ corresponds to the character
  $$(g_{i,j}) \in M_u \mapsto \prod_{i,j}\vert \det g_{i,j}\vert^{\lambda_{i,j}}.$$
  Lemma \ref{expUnit} implies that the real part of the exponents of $r_{M_u}^M(\Sigma_1\vert \det\vert^{\mu_1}\boxtimes \cdots \boxtimes \Sigma_l\vert \det\vert^{\mu_l})$ is of the form $(\mu_1, \dots, \mu_1, \mu_2, \dots, \mu_2, \dots, \mu_l, \dots, \mu_l)$ (each $\mu_i$ appearing $l$ times). Suppose that 
  $$\mathrm{Hom}_{M_u^u}(r_{M_u}^M(\Sigma_1\vert \det\vert^{\mu_1}\boxtimes \cdots \boxtimes \Sigma_l\vert \det\vert^{\mu_l}), \mathbb{C}) \not =0.$$
  Then, in particular, the restriction to $M_u^u$ of the exponents of $r_{M_u}^M(\Sigma_1\vert \det\vert^{\mu_1}\boxtimes \cdots \boxtimes \Sigma_l\vert \det\vert^{\mu_l})$ should be trivial.  So, if there exists an index $i$ such that $a_{i,i}\not = 0$, we get $\mu_i = 0$. As all the $\mu_i$'s are different, if this $i$ exists, it is unique. Moreover, if for $i\not =j$, we have $a_{i,j}\not = 0$ then $\mu_i+\mu_j=0$. Hence, the only double coset $u \in Q' \backslash G /H$, for which the space $\mathrm{Hom}_{M_u^u}(r_{M_u}^M(\Sigma_1\vert \det\vert^{\mu_1}\boxtimes \cdots \boxtimes \Sigma_l\vert \det\vert^{\mu_l}), \mathbb{C})$ can possibly be non zero, is the one corresponding to the matrix 
  $$a = \begin{pmatrix}
        0 & \cdots & d_1 \\
          & \revdots & \\
        d_l & \cdots & 0  
    \end{pmatrix}$$
  which implies in particular $d_i = d_{l-i+1}$, for all $i$. For all $i$, denote by $G_{d_i}^{\mathrm{diag^c}}$ the subgroup consisting of elements $(g, g^c)$, with $g\in G_{d_i}$. Denote $\alpha=\lfloor l/2\rfloor+1$ if $l$ is odd, $\alpha=0$ if $l$ is even. It remains to show that the space 
  $$\mathrm{Hom}_{M^u}(\Sigma_1\vert \det\vert^{\mu_1}\boxtimes \cdots \boxtimes \Sigma_l\vert \det\vert^{\mu_l}, \mathbb{C}) = \mathrm{Hom}_{H_{d_\alpha}} (\Sigma_\alpha, \mathbb{C}) \otimes \bigotimes_{i=1}^{\lfloor l/2\rfloor} \mathrm{Hom}_{G_{d_i}^{\mathrm{diag}^c}}(\Sigma_i\vert \det\vert^{\mu_i}\boxtimes \Sigma_{l-i+1}\vert \det\vert^{-\mu_i}, \mathbb{C})$$
  is at most one-dimensional. We first remark that if $\alpha \not = 0$, then
  $$\mathrm{Hom}_{H_{d_\alpha}}(\Sigma_\alpha, \mathbb{C})$$
  is known to be at most one-dimensional. Finally, the space
  $$\mathrm{Hom}_{G_{d_i}^{\mathrm{diag^c}}}(\Sigma_i\vert \det\vert^{\mu_i}\boxtimes \Sigma_{l-i+1}\vert \det\vert^{-\mu_i}, \mathbb{C})$$
  is non zero if and only if $\Sigma_i \cong \Sigma_{l-i+1}^*$ and in that case the dimension is one.
    
\end{proof}

\section{A trace Paley-Wiener theorem}\label{PW}

In this section, $E/F$ is a quadratic extension of non-Archimedean local fields, $G=G_n(E)$ and $H=G_n(F)$.

\subsection{Relative characters for $\mathrm{GL}_n(F)\backslash \mathrm{GL}_n(E)$ }

For every irreducible representation $\pi$ of $G$, we denote by $\mathrm{End}^{\infty}(\pi)$ the space of endomorphisms on $\pi$ that are biinvariant by a compact-open subgroup of $G$. For all $f\in C_c^\infty(G)$, $\pi(f)\in \mathrm{End}^{\infty}(\pi)$. We identify $\mathrm{End}^{\infty}(\pi)$ with $\pi^\vee \otimes \pi$ through the isomorphism
$$\begin{array}{cccc}
     & \pi^\vee \otimes \pi & \longrightarrow & \mathrm{End}^\infty (\pi) \\
     & (v^\vee \otimes v) & \longmapsto & (w\mapsto \langle w, v^\vee\rangle v)
\end{array}$$
with inverse 
$$\begin{array}{cccc}
     & \mathrm{End}^\infty (\pi)  & \longrightarrow & \pi^\vee \otimes \pi \\
     & T & \longmapsto & \sum_{i,j} \langle T(e_j), e_i^\vee \rangle e_j^\vee \otimes e_i
\end{array}$$
with $(e_i)_i$ and $(e_j^\vee)_j$ basis of $\pi$ of $\pi^\vee$ respectively. In addition, if $\pi$ is generic, then $\pi \cong \mathcal{W}(\pi, \psi_n)$ and this isomorphism is unique up to a scalar. We choose the isomorphism 
$$\pi^\vee \otimes \pi \cong \mathcal{W}(\pi^\vee, \psi_n^{-1}) \otimes \mathcal{W}(\pi, \psi_n)$$
such that the natural pairing $\langle \cdot, \cdot\rangle $ is sent to the scalar product $(\cdot, \cdot)^{\mathrm{Whitt}}$ on $\mathcal{W}(\pi, \psi_n)$, defined in part 2.8 \cite{RBPPlanch} by 
$$(W, W')^{\mathrm{Whitt}} = \int_{N_n(E)\backslash P_n(E)} W(p)\overline{W'(p)}dp, \ \ W, W' \in \mathcal{W}(\pi, \psi_n),$$
and is convergent when $\pi$ is generic and unitary. This pairing $(\cdot, \cdot)^{\mathrm{Whitt}}$ provides, moreover, a bilinear form on $\overline{\mathcal{W}(\pi, \psi_n)}\times \mathcal{W}(\pi, \psi_n)$ and, as $\pi$ is unitary, identifies $\overline{\mathcal{W}(\pi, \psi_n)}$ to ${\mathcal{W}(\pi^\vee, \psi_n^{-1})}$. Hence, through the isomorphisms
$$(*): \mathrm{End}^\infty(\pi) \cong \pi^\vee \otimes \pi \cong \mathcal{W}(\pi^\vee, \psi_n^{-1}) \otimes \mathcal{W}(\pi, \psi_n)\cong \overline{\mathcal{W}(\pi, \psi_n)}\otimes \mathcal{W}(\pi, \psi_n)$$
the image of $\pi(f)$ in $\overline{\mathcal{W}(\pi, \psi_n)}\otimes \mathcal{W}(\pi, \psi_n)$ is given by

\begin{align*}
    & \sum_{W, W' \in \mathcal{B}(\pi, \psi_n)}(R(f)W, \overline{W'})^{\mathrm{Whitt}}\overline{W}\otimes W', \\
    & = \sum_{W\in \mathcal{B}(\pi, \psi_n)} \overline{W}\otimes \sum_{W'\in \mathcal{B}(\pi, \psi_n)} (R(f)W, \overline{W'})^{\mathrm{Whitt}}W', \\
    & = \sum_{W\in \mathcal{B}(\pi, \psi_{n})}\overline{W}\otimes R(f) W,
\end{align*}
where $\mathcal{B}(\pi, \psi_{n})$ is an orthonormal basis of $\mathcal{W}(\pi, \psi_n)$. Finally, recall that we have defined (see \ref{Wfpi}) 
$$W_{f, \pi}(g_1, g_2) =\int^*_N\mathrm{Trace}\left(\pi(g_1^{-1}ug_2)\pi(f^\vee)\right)\psi_n(u)^{-1}du.$$
Using the formula 
$$\mathrm{Trace}(\pi(g)\pi(f^\vee)) = \sum_{v} \langle \pi(g)\pi(f^\vee)v, v^\vee \rangle,$$
we obtain 
$$\mathrm{Trace}(\pi(g))\pi(f^\vee)) = \sum_{W\in \mathcal{B}(\pi, \psi_n)}(R(g)R(f^\vee)W, W)^{\mathrm{Whitt}}.$$
By using the inversion formula, Proposition 2.143 in \cite{RBPPlanch}, for all $W\in \mathcal{B}(\pi, \psi_n)$ we get
$$ \int_N (R(g_1^{-1}ug_2)R(f^\vee)W, W)^{\mathrm{Whitt}} \psi_n(u)^{-1}du = R(g_2)R(f^\vee)W(1)\overline{R(g_1)W(1)}. $$
Therefore, the function
$$W_{f, \pi} = \sum_{W\in \mathcal{B}(\pi, \psi_n)} \overline{W} \otimes R(f^\vee)W,$$
is the image of $\pi(f)$ in $\overline{\mathcal{W}(\pi, \psi_n)}\otimes \mathcal{W}(\pi, \psi_n)$ by the isomorphisms $(*)$. In other words, the isomorphisms
$$\mathrm{End}^\infty(\pi) \cong \pi^\vee \otimes \pi \cong \mathcal{W}(\pi^\vee, \psi_n^{-1}) \otimes \mathcal{W}(\pi, \psi_n)\cong \overline{\mathcal{W}(\pi, \psi_n)}\otimes \mathcal{W}(\pi, \psi_n).$$
allow us to define a map 
$$\begin{array}{cccc}
     & C_c^\infty(G) & \longrightarrow & \overline{\mathcal{W}(\pi, \psi_n)}\otimes \mathcal{W}(\pi, \psi_n)\\
     & f & \mapsto & W_{f, \pi}
\end{array}$$
and from this map we define a \emph{relative character}.

\begin{definition}[Relative character]
    Let $G=G_n(E)$, $H=G_n(F)$, and $\pi\in \mathrm{Temp}_H(G)$. The linear form $\beta$, defined by 
    $$\beta (W) = \int_{N_n(F)\backslash P_n(F)} W(p)dp, \ \mathrm{for} \ W\in \mathcal{W}(\pi^\vee, \psi_n^{-1}),$$
    is a non trivial element in $ \mathrm{Hom}_H(\mathcal{W}(\pi^\vee, \psi_n^{-1}), \mathbf{1})$ (Section 4 of \cite{Offen}). We then define the \emph{relative character}, $I_\pi$ of $\pi$ by the composition  
    $$\xymatrix{
     C_c^\infty(G)   \ar[rd]_{I_\pi} \ar[r]  & \mathcal{W}(\pi^\vee, \psi_n^{-1}) \otimes \mathcal{W}(\pi, \psi_n)\ar[d]^{\beta \otimes \mathbf{ev}_1} \\
      & \mathbb{C}
     }$$
    Therefore, by identifying again $\overline{\mathcal{W}(\pi, \psi_n)}$ with ${\mathcal{W}(\pi^\vee, \psi_n^{-1})}$ we have 
     $$I_\pi(f) = \sum_{W\in \mathcal{B}(\pi, \psi_n)} \beta(\overline{W}) R(f)W(1) \in \mathbb{C}$$
     where $\mathcal{B}(\pi, \psi_n)$ is a suitable orthonormal basis of $\mathcal{W}(\pi, \psi_n)$.
\end{definition}
Let $\pi\in \mathrm{Temp}_H(G)$, we denote 
$$W_{f, \pi}^1 = W_{f^\vee, \pi}(1, \cdot) \in \mathcal{W}(\pi, \psi_n).$$
By Proposition \ref{ZBeta}, for all $\phi \in C_c^\infty(G)$, we have 
$$\beta(W_{f, \pi}^1)=\hat{\phi}(0)^{-1}Z(1, W_{f, \pi}^1, \phi).$$
Since
$$W_{f, \pi} = \sum_{W\in \mathcal{B}(\pi, \psi_n)} \overline{W} \otimes R(f^\vee)W,$$
the relative character $I_\pi(f)$, for $f\in C_c^\infty(G)$, can be written as
$$I_\pi(f) = \beta(W_{f, \pi}^1).$$
Let $\phi \in C_c^\infty(F^n)$ such that $\hat{\phi}(0) \not = 0$, the relative character can be written in terms of Rankin-Selberg Zeta integrals as follows, 
$$I_\pi(f)=\hat{\phi}^{-1}(0)Z(1, W_{f, \pi}^1, \phi). $$

\subsection{The theorem}
We set
$$\mathcal{I} = \{ \pi \in \mathrm{Temp}_H(G) \mapsto I_\pi(f), \ f\in C_c^\infty(G)\}$$
Let $f\in C_c^\infty(G)$, the map $\pi \mapsto I_\pi(f)$ is a function from $\mathrm{Temp}_H(G)$ to $\mathbb{C}$, and thanks to Theorem 4.2 in \cite{Mat}, we have
$$\mathrm{Temp}_H(G) = BC_n(\mathrm{Temp_{gen}}(U(n))).$$
Therefore, the application $\sigma \mapsto I_{BC_n(\sigma)}(f)$ is now a function on $\mathrm{Temp_{gen}}(U(n))$, and 
$$ \mathcal{I} \cong \{ \sigma \in \mathrm{Temp_{gen}}(U(n)) \mapsto I_{BC_n(\sigma)}(f), \ f\in C_c^\infty(G)\}.$$
Let $\mathfrak{Z}_{\mathrm{gen}}(U(n))$ be the algebra of regular functions on the variety of generic cupsidal supports of $U(n)$, that is $\mathfrak{Z}_{\mathrm{gen}}(U(n)) =\mathbb{C}[\Omega_{\mathrm{gen}}(U(n))]$. Recall that we can attach a cuspidal support to each irreducible representation, thus this \emph{generic Bernstein center} can be seen as a set of functions from $\mathrm{Irr_{gen}}(U(n))$ to $\mathbb{C}$, and we write 
$$\mathfrak{Z}_{\mathrm{gen}}(U(n))\vert_{\mathrm{Temp_{gen}}(U(n))}$$
for the set corresponding to these functions restricted to $\mathrm{Temp_{gen}}(U(n))$. The application of restriction 
$$\mathfrak{Z}_{\mathrm{gen}}(U(n)) \longrightarrow \mathfrak{Z}_{\mathrm{gen}}(U(n))\vert_{\mathrm{Temp_{gen}}(U(n))}$$
is injective, because the image of $\mathrm{Temp_{gen}}(U(n))$ in $\Omega_{\mathrm{gen}}(U(n))$, via the cuspidal support map $\mathbf{Cs}$, is Zariski-dense.
\begin{theo}\label{mainT}
    The base-change map induces an isomorphism
    $$\mathcal{I} \cong \mathfrak{Z}_{\mathrm{gen}}(U(n)).$$
\end{theo}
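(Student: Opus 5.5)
We prove the two inclusions $\mathcal{I}\subseteq \mathfrak{Z}_{\mathrm{gen}}(U(n))\vert_{\mathrm{Temp_{gen}}(U(n))}$ and $\mathfrak{Z}_{\mathrm{gen}}(U(n))\subseteq \mathcal{I}$, both under the identification $\sigma\mapsto I_{BC_n(\sigma)}(f)$. Since restriction to $\mathrm{Temp_{gen}}(U(n))$ is injective, this gives the isomorphism.

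\emph{Regularity.} Fix $f\in C_c^\infty(G)$ and a connected component of $\Omega_{\mathrm{gen}}(U(n))$. By Proposition~\ref{closedim}, its image under $CBC_n$ lies in a single component $\Omega_\rho\cong\mathcal{X}(L)/W(L,\rho)$ of $\Omega(G)$, and the immersion is closed. By Proposition~\ref{propcuspBC}, the tempered generic representation of $U(n)$ with a given cuspidal support is sent by $BC_n$ to the generic subquotient $\pi_+$ of $i_Q^G(\rho\otimes\chi)$, with $\chi$ the image point. Proposition~\ref{Wpi} gives $W^1_{f,i_Q^G(\rho\otimes\chi)}=W^1_{f,\pi_+}$. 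Hence
\[
I_{BC_n(\sigma)}(f)=\hat\phi(0)^{-1}Z\bigl(1,W^1_{f,i_Q^G(\rho\otimes\chi)},\phi\bigr)=\beta^\rho_\chi\bigl(W^1_{f,\chi}\otimes\phi\bigr).
\]
By Lemma~\ref{Whittreg}, $\chi\mapsto W^1_{f,\chi}$ is a regular section. Proposition~\ref{BetaFam} then shows that this expression is rational in $\chi$ and becomes regular after division by $L(1,i_Q^G(\rho\otimes\chi),\mathrm{As}^+)$. The key point is that on the image of $CBC_n$ the factor $L(1,\cdot,\mathrm{As}^+)$ has no poles along the generic locus, so no pole occurs there. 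This uses $L(s,\pi,\mathrm{Ad})=L(s,\pi,\mathrm{As}^+)L(s,\pi,\mathrm{As}^-)$ together with Proposition~\ref{LAd}, and the fact that genericity on the unitary side means $L(1,\varphi,\mathrm{As}^-)<\infty$. Where $i_Q^G(\rho\otimes\chi)$ is reducible, I would argue the pole cancels using Proposition~\ref{BetaProportional}: the $H$-invariant functional is unique in the anti-dominant chamber, so it factors through $\pi_+$, where $\beta$ is finite. The resulting function is regular and $W(L,\rho)$-invariant by Proposition~\ref{Wpi}, so it lies in $\mathbb{C}[\Omega_\rho]$. Restricting along the closed immersion gives an element of $\mathbb{C}[\Omega]$. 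Finitely many components are involved, by the Bernstein-center finiteness (Theorem VI.10.6 of \cite{Ren}) already noted before Corollary~\ref{CoroGKfam}. Therefore $\sigma\mapsto I_{BC_n(\sigma)}(f)$ lies in $\mathfrak{Z}_{\mathrm{gen}}(U(n))$.

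\emph{Surjectivity.} Since $CBC_n$ is a closed immersion, every $z\in\mathfrak{Z}_{\mathrm{gen}}(U(n))$ is the pullback of some $z'\in\mathfrak{Z}(G)$ supported on finitely many components. For $f_0\in C_c^\infty(G)$ we have $I_\pi(z'\cdot f_0)=z'(\mathbf{Cs}(\pi))I_\pi(f_0)$, as in the proof of Corollary~\ref{CoroGKfam}. It therefore suffices to produce, for each component $\Omega_\rho$, a function $f_0$ with $I_\pi(f_0)=1$ for all $\pi\in\mathrm{Temp}_H(G)$ whose support lies in $\Omega_\rho$, and $I_\pi(f_0)=0$ on the other components (multiply by the idempotent of $\Omega_\rho$). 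Corollary~\ref{CoroGKfam} gives $f$ with $W^1_{f,\pi_{X'}}\vert_{P_n}\in C_c^\infty(N_n\backslash P_n,X',\psi_n)$. More precisely, the Gelfand--Kazhdan family statement (Proposition~\ref{GKfam}) shows that every element of $C_c^\infty(N_n\backslash P_n,X',\psi_n)$ is realized as such a restriction. Choose this element to be $\mathbf{1}_{N_n(F)\backslash N_n(F)K_P}\cdot\psi$-adjusted with constant value in $X'$, so that its $\beta$-integral over $N_n(F)\backslash P_n(F)$ equals $1\in X'$. Then $I_\pi(f)=\beta(W^1_{f,\pi})=1$ identically on the component, and multiplying by $z'$ yields $z$.

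\emph{Main obstacle.} The main obstacle is the pole analysis in the regularity step, namely showing that the $\mathrm{As}^+$ denominator gives no genuine poles on the tempered generic image, and in fact that the regular extension restricted to $CBC_n(\Omega)$ really agrees with $I_{BC_n(\sigma)}$ at reducibility points. My first attempt would be to reduce, via Proposition~\ref{BetaProportional} and the line-restriction lemma in the proof of Proposition~\ref{BetaFam}, to the non-vanishing of $\beta$ on the unique distinguished subquotient.
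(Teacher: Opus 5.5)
Your surjectivity step is fine. Producing an explicit unit on each component via Proposition~\ref{GKfam} is equivalent to the paper's remark that $\mathcal{I}$ is an ideal of $\mathfrak{Z}_{\mathrm{gen}}(U(n))$ lying in no maximal ideal. The regularity step, however, has a genuine gap, and it is exactly the part you flag as the main obstacle.

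\textbf{The pole claim is false.} You assert that $L(1,i_Q^G(\rho\otimes\chi),\mathrm{As}^+)$ has no poles on the image of $CBC_n$. The identity $L(\mathrm{Ad})=L(\mathrm{As}^+)L(\mathrm{As}^-)$ together with Proposition~\ref{LAd} only gives finiteness where the whole induced representation is irreducible. The condition $L(1,\varphi,\mathrm{As}^-)<\infty$ of Proposition~\ref{propcuspBC} concerns the parameter of the generic subquotient, not the semisimple parameter of $\rho\otimes\chi$ that governs the denominator. For instance, $L(1+2\mu_i,\sigma_i,\mathrm{As}^+)$ has poles along $\Re(\mu_i)=-1/2$, inside the anti-dominant part of $\mathcal{X}(M)$, at points where $i_Q^G(\rho\otimes\mu)$ is reducible (cf.\ the hyperplanes $D_1,\dots,D_8$ in the paper). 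For the same reason the continued function is in general only rational on $\mathcal{X}(L)$. So you cannot first place it in $\mathbb{C}[\Omega_\rho]$ and then restrict; regularity must be proved directly on $\mathcal{X}(M)$.

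\textbf{The cancellation argument does not work.} Multiplicity one (Proposition~\ref{BetaProportional}) is a statement about $H$-invariant functionals on the single representation $i_Q^G(\rho\otimes\lambda)$. It says nothing about the Laurent expansion of the family $\mu\mapsto\beta^\rho_\mu$ at $\lambda$. If the family had a pole, its leading coefficient would be a nonzero $H$-invariant functional, hence proportional to $\beta_{\pi^+_\lambda}\circ p$. That is perfectly compatible with uniqueness.

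\textbf{The missing idea} is the paper's Lemma~\ref{RestReg}, a Hartogs-type argument. It is applied to $\Phi(\mu)=\beta^\rho_\mu(e_\mu\otimes\phi)$, with $V=\mathfrak{a}^*_{M,\mathbb{C}}$ and transverse direction $\Delta=\{\vert\det\vert_E^s\}$.
\begin{itemize}
\item For $\mu\in V$ anti-dominant, $\pi^+_\mu$ is a \emph{quotient} of $i_Q^G(\rho\otimes\mu)$. Along $\mu\otimes\vert\det\vert_E^s$ the family therefore becomes a Rankin--Selberg integral $Z(1+2s,W_{\mu,s},\phi)$ of the fixed irreducible representation $\pi^+_\mu$.
\item This integral is regular at $s=0$. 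Indeed $\pi^+_\mu$ is conjugate-dual, so $L(1,\pi^+_\mu,\mathrm{As}^+)<\infty$ by Proposition 7.4 of \cite{GGPlocalroot} and Proposition~\ref{LAd} applied to its standard module.
\item The polar hyperplanes of the $\mathrm{As}^+$ factor are simple and have pairwise distinct traces on $V$.
\end{itemize}
Together these give holomorphy on the anti-dominant part of $V$, and independence of $Q$ extends it to all of $\mathcal{X}(M)$.

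\textbf{The identification at reducibility points is also incomplete.} Matching the regular value at a reducibility point $\lambda$ with $I_{BC_n(\gamma)}(f)$ is not a non-vanishing question. Proposition~\ref{BetaProportional} only gives $\beta_{\pi^+_\lambda}\circ p=c\,\beta_\lambda$; it applies because $BC_n(\sigma)$ is induced from supercuspidals by Proposition~\ref{propcuspBC}. One must still prove $c=1$. The paper does this by choosing, via Corollary~\ref{CoroGKfamKirrilov}, a family whose Whittaker functions are compactly supported on $N_n\backslash P_n$. On that family both functionals equal the same absolutely convergent integral for every $\mu$, and analytic continuation gives $c=1$.
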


\begin{proof}
Let $f \in C_c^\infty(G)$. Let $M= G_{m_1}\times \dots \times G_{m_r}\times U(l)$ be a Levi subgroup of $U(n)$, and $\sigma$ be an irreducible, unitary, and supercuspidal representation of $M$. Let $\pi\in \mathrm{Temp_{gen}}(U(n))$, and assume that $\pi$ is a subquotient of $i_P^{U(n)}(\sigma\otimes \lambda)$, where $P=MU$ is a parabolic subgroup of $U(n)$, and $\lambda\in \mathcal{X}(M)$ is real. In other words, the cuspidal support of $\pi$ is $[M, \sigma\otimes \lambda]$. Let $\mathbf{L}=\mathrm{Res}_{E/F}M_E$ be the corresponding Levi in $\mathrm{GL}_n$. From now on, we identify $\mathcal{X}(M)$ as a subgroup of $\mathcal{X}(L)$ via the injection
$$\lambda_1 \boxtimes \cdots \boxtimes \lambda_r \longmapsto \lambda_1 \boxtimes \lambda_1^{-1} \boxtimes \cdots \boxtimes \lambda_r\boxtimes \lambda_r^{-1}.$$
We denote $G=\mathrm{GL}_n(E)$, $H=\mathrm{GL}_n(F)$, and $Q=LU'$ a parabolic subgroup of $G$ where $L=\mathbf{L}(E)$. When $i_P^{U(n)}(\sigma\otimes\lambda)$ is irreducible, we have 

$$BC(i_P^{U(n)}(\sigma\otimes\lambda)) = i_{Q}^G(BC(\sigma)\otimes \lambda),$$
when $i_P^{U(n)}(\sigma\otimes\lambda)$ is reducible, we define 
$$BC(i_P^{U(n)}(\sigma\otimes\lambda)) := i_{Q}^G(BC(\sigma)\otimes \lambda).$$
To establish the inclusion 
$$\mathcal{I} \subseteq \mathfrak{Z}_{\mathrm{gen}}(U(n)),$$
we use the following lemma.

\begin{lem}\label{RestReg}
    Let $\Phi : \mathbb{C}^n \to \mathbb{C}$ be a meromorphic function and let $V, \Delta$ be vector subspaces of $\mathbb{C}^n$, with $\dim \Delta = 1$. Suppose the following conditions.
    \begin{enumerate}
    
        \item The restriction $\Phi\vert_V$ is well defined as a meromorphic function. In other words, $V$ is not included in any of the poles of $\Phi$.
        \item For all $v\in V$, the restriction $\Phi\vert_{v+\Delta}$ is well defined as a meromorphic function and is regular in $v$.
        \item The poles $D_1, \dots, D_m$ of $\Phi$ are all simple and affine hyperplanes, and for all $i\not= j $
        $$D_i \cap V \not = D_j \cap V.$$
        
    \end{enumerate}

    Then $\Phi\vert_V$ is holomorphic on $V$.
\end{lem}

\begin{proof}
    First, remark that, since for all $i\not = j$, 
    $$D_i \cap V \not = D_j \cap V,$$
    the codimension of $D_i\cap V \cap D_j$ in $V$ is either $2$ or $\infty$. Therefore, applying Hartogs' lemma, it suffices to show that for all $v\in D_i\cap V\backslash (\bigcup_{i\not = j}D_i\cap D_j\cap V)$, the function $\Phi\vert _V$ is holomorphic in a neighborhood of $v$. Let $i \in \{1, \dots, m\}$, for simplicity, denote $D=D_i$. Let $v\in D\cap V\backslash (\bigcup_{i\not = j}D_i\cap D_j\cap V)$, there exists an open neighborhood $U_v$ of $v$, such that we can write 
    $$\Phi\vert_{U_v} = \frac{g}{\lambda},$$
    where $\lambda$ denotes an equation of the hyperplane $D$ and $g$ is a holomorphic function in $U_v$. Without loss of generality, we can choose a system of coordinates such that $D$ is the set of elements $z= (z_1, \dots, z_n)\in \mathbb{C}^n$, such that $z_1=0$. Hence, it suffices to show that the function 
    $$z= (z_1, \dots, z_n) \in U_v\cap V \mapsto \Phi(z) = \frac{g(z)}{z_1},$$
    is holomorphic, or equivalently, that $g=0$ on $U_v\cap V\cap D$. By Assumption 1, we have 
    $$V \not \subseteq D = \{z=(z_1, \cdots, z_n)\in \mathbb{C}^n, \ z_1=0\}.$$
    By Assumption 2, for all $z\in V$ the function
    $$\alpha \in \mathbb{C}\mapsto\Phi(z+\alpha l), \ l \in \Delta$$
    is holomorphic at $\alpha=0$. Hence, for $z\in U_v\cap V\cap D$
    $$\alpha \mapsto\Phi(z+\alpha l) = \frac{g(z+\alpha l)}{\alpha l_1}$$
    is also holomorphic in $\alpha = 0$, meaning that $g(z) = 0$ for all $z\in U_v \cap V\cap D$ and that finishes the proof.
    
\end{proof}

We now want to apply this lemma to a meromorphic continuation of the relative characters given by a consequence of Proposition \ref{BetaFam}. We extend the definition of relative characters to $\mathrm{Temp}(G)$. Let $Q=LU$ be a parabolic subgroup of $G$, and let $\rho$ be a representation of $L$, induced from an irreducible and supercuspidal representation. We define, for all $\mu \in \mathcal{X}_{\mathrm{unit}}(L)$,
$$I_{i_Q^G(\rho\otimes \mu)}(f) := \beta^{\rho}_\mu (W_{f,i_Q^G(\rho\otimes \mu) }^1)= \sum_{e\in \mathcal{B}({i_Q^G(\rho\otimes \mu)})} \beta^{\rho}_\mu(R(f)e) J^\rho_\mu(e^\vee),$$
where $J^\rho_\mu$ denotes the the Jacquet functional on $i_Q^G(\rho^\vee\otimes -\mu)$.
\begin{coro}[of Proposition \ref{BetaFam}]
    Using the same notation, the function 
    $$\mu \in \mathcal{X}_{\mathrm{unit}}(L) \mapsto I_{i_Q^G(\rho\otimes \mu)}(f)$$
    admits a meromorphic continuation to $\mathcal{X}(L)$. 
\end{coro}
Indeed, the meromorphic continuation is a direct consequence of Proposition \ref{BetaFam}, the regularity of Jacquet's functionals and the formula
$$I_{i_Q^G(\rho\otimes \mu)}(f) = \beta^{\rho}_\mu (W_{f,i_Q^G(\rho\otimes \mu) }^1).$$
Denote by $\pi_\mu^+$ the unique irreducible and generic subquotient of $i_Q^G(\rho\otimes \mu)$. Thanks to Proposition \ref{Wpi}, for all $\mu \in \mathcal{X}(L)$, we have 
$$W_{f, i_Q^G(\rho\otimes \mu)}^1 = W_{f, \pi_\mu^+}^1 \in \mathcal{W}(\pi_\mu^+, \psi_n).$$
Since two induced representations of the form 
$$i_Q^G(\rho), \ \ \mathrm{and} \ \ i_{Q'}^G(\rho')$$
have the same Jordan-Hölder factors as long as the cuspidal pairs $(L, \rho)$, $(L', \rho')$ are conjugate, the function $W_{f, i_Q^G(\rho\otimes \mu)}^1 = W_{f, \pi_\mu^+}^1$ only depends on the generic subquotient $\pi_\mu^+$, which does not depend on the choice of a parabolic. Therefore, the above definition of $I_{i_Q^G(\rho\otimes \mu)}$ does not depend on the choice of the parabolic subgroup $Q$.

We now show that this rational extension of the relative characters is a regular function when restricted to the image of the base-change map. For $M$ a Levi subgroup of $U(n)$, and $\sigma$ an irreducible, unitary, and supercuspidal representation of $M$, assume $\rho = BC_n(\sigma)$ (is a representation of $L=\mathrm{Res}_{E/F}M_E(F)$), and $\mu \in \mathcal{X}(M)$. We identify 
$$\mathbb{C}^k \cong \mathfrak{a}_{L, \mathbb{C}}^* = X^*(L)\otimes \mathbb{C}.$$
We say that a character $\mu\in \mathfrak{a}^*_{L, \mathbb{C}}$ is \emph{anti-dominant} with respect to $Q=LU$ standard with $L=G_{n_1}\times \cdots \times G_{n_k}$, if $\mu=(\mu_1, \dots, \mu_k)$ with $\Re(\mu_1)\leq \cdots \leq \Re(\mu_k)$. Let 
$$\mu \in \mathfrak{a}_{L, \mathbb{C}}^*\mapsto e_\mu\in i_{Q}^G(\rho \otimes \mu)$$ 
be regular section and $\phi\in C_c^\infty(F^n)$. We apply Lemma \ref{RestReg} to 
$$\Phi :\mu \in \mathfrak{a}_{L, \mathbb{C}}^* \mapsto \beta_\mu^{\rho}(e_\mu \otimes \phi),$$ 
with $V=\mathfrak{a}_{M, \mathbb{C}}^*\subset \mathfrak{a}_{L, \mathbb{C}}^*$ and $\Delta = \{\vert \det \vert_E^s, \  s\in \mathbb{C}\}$, in order to show that $\Phi\vert_V$ is regular on 
$$V_Q^-:= \{\lambda \in V \  \mathrm{anti-dominant}\}.$$

By definition, 
$$\mu \in \mathcal{X}(L) \mapsto \beta_\mu^{BC_n(\sigma)}(e_\mu \otimes \phi)$$
is regular in a neighborhood of the unitary characters, in particular it is regular in a neighborhood of $0 \in \mathcal{X}(M)$, meaning that none of the poles contain $0$, therefore they do not contain $\mathcal{X}(M)$. 
In other words,
$$\Phi :\mu \in \mathfrak{a}_{L, \mathbb{C}}^* \mapsto \beta_\mu^{\rho}(e_\mu \otimes \phi),$$
is a meromorphic function verifying condition 1 of Lemma \ref{RestReg}. 

The second part of Proposition \ref{BetaFam} ensures that
$$\mu \in \mathcal{X}(L) \mapsto \frac{\beta^\rho_\mu(e_\mu \otimes \phi)}{L(1, i_Q^G(\rho\otimes \mu), \mathrm{As}^+)}$$
is a regular function on $\mathcal{X}(L)$. In particular, in the proof we show that for all $\mu \in \mathcal{X}(L)$
$$s \in \Delta \mapsto \frac{\beta^\rho_{\mu_s}(e_{\mu_s} \otimes \phi)}{L(1, i_Q^G(\rho\otimes \mu_s), \mathrm{As}^+)},$$
where $\mu_s= \mu \otimes \vert \det \vert_E^s$, is regular, and therefore this also holds for all $\mu \in \mathcal{X}(M)$. Recall that we have fixed the isomorphism between $i_Q^G(\rho\otimes \mu)$ and its Whittaker model via the Jacquet functional. Since $W_{\mu_s} \in \mathcal{W}(i_Q^G(\rho\otimes \mu_s), \psi_n)$ can be written as $\vert \det \vert_E^sW_{\mu, s}$ with $W_{\mu, s}\in \mathcal{W}(i_Q^G(\rho\otimes \mu), \psi_n)$, we showed that the regularity of the above quotient is given by the regularity of 
$$s \in \mathbb{C} \mapsto\frac{Z(1+2s, W_{\mu,s}, \phi)}{L(1+2s, i_Q^G(\rho\otimes \mu), \mathrm{As}^+)}.$$
If $\mu$ is anti-dominant, then the unique generic irreducible subquotient $\pi_\mu^+$ of $i_Q^G(\rho\otimes \mu)$ is in fact a quotient (see Proposition 2.10 in \cite{BZ2}, or \cite{JS}). In that case we have $W_{\mu,s} \in \mathcal{W}(\pi^+_\mu, \psi_n)$ and $Z(1+2s, W_{\mu,s}, \phi)$ is regular at $s=0$.

Combining Proposition 7.4 in \cite{GGPlocalroot} and Proposition \ref{LAd}, we obtain that 
$$L(1, i_Q^G(\rho\otimes \mu), \mathrm{As}^+)\not = \infty,$$
as long as $i_Q^G(\rho\otimes \mu)$ is irreducible. Recall that there exists a parabolic subgroup $Q'=L'U'$, a tempered irreducible representation $\rho'$ of $L'$ and a dominant character $\mu'\in \mathcal{X}(L')$ such that $\pi_\mu^+$ is isomorphic to its  standard module $i_{Q'}^G(\rho'\otimes \mu')$. Even if $\pi_\mu^+$ might not be in the image of the base-change map, it is a conjugate-dual representation. Indeed, $(\pi_\mu^+)^*$ is the unique generic irreducible subquotient of 
$$i_Q^G(\rho\otimes \mu)^*\cong i_Q^G((\rho\otimes \mu)^*) \cong i_{\overline{Q}}^G(\rho\otimes \mu).$$
Therefore, 
$$\pi_\mu^+\cong(\pi_\mu^+)^*$$
and we can use again Proposition 7.4 in \cite{GGPlocalroot} and Proposition \ref{LAd}, to get
$$L(1, \pi_\mu^+, \mathrm{As}^+) = L(1, i_{Q'}^G(\rho'\otimes \mu'), \mathrm{As}^+) \not = \infty.$$
This shows that condition 2 of Lemma \ref{RestReg} is satisfied, with 
$$\Delta_\mu = \{\mu \otimes \vert \det \vert^s, s \in \mathbb{C}\}, \ \forall \mu \in V_{Q}^-.$$

It remains to show that the third condition is satisfied. Set
$$M = G_{m_1}\times \dots \times G_{m_r}\times U(l), \ \ \mathrm{and} \ \ \sigma = \sigma_1 \boxtimes \cdots \boxtimes \sigma_r \boxtimes \sigma^u$$
with $\sigma_i, \sigma^u$ irreducible, supercuspidal and unitary. Then 
$$\rho = \sigma_1 \boxtimes \sigma_1^* \boxtimes \cdots \boxtimes \sigma_r \boxtimes \sigma_r^* \boxtimes BC_n(\sigma^u)$$
and we show that the poles of 
$$\mu \in \mathcal{X}(L) \mapsto L(1, i_Q^G(\rho\otimes \mu), \mathrm{As}^+)$$
are simple, defined by hyperplanes that are different when intersected with $\mathcal{X}(M)$. Recall that the Asai $L$-function decomposes as follows.
\begin{align*}
    L(1, i_Q^G(\rho\otimes \mu), \mathrm{As}^+) = & \prod_{i=1}^rL(1+2\mu_i, \sigma_i, \mathrm{As}^+)L(1-2\mu_i, \sigma_i^*, \mathrm{As}^+) \\
    & \times L(1, BC_n(\sigma^u), \mathrm{As}^+) \times \prod_{i=1}^rL(1, \sigma_i\boxtimes\sigma_i^\vee) \\
    & \times \prod_{i<j} [ L(1+\mu_i+\mu_j, \sigma_i\boxtimes\sigma_j^c)L(1-\mu_i+\mu_j, \sigma_i^*\boxtimes\sigma_j^c) \\
    &  \ \ \ \ \ \ \ \ \    L(1+\mu_i-\mu_j, \sigma_i\boxtimes\sigma_j^\vee)L(1-\mu_i-\mu_j, \sigma_i^*\boxtimes\sigma_j^\vee) ] \\
    & \times \prod_{i=1}^r L(1+\mu_i, \sigma_i \boxtimes BC_n(\sigma^u)^c)L(1-\mu_i, \sigma_i^* \boxtimes BC_n(\sigma^u)^\vee)
\end{align*}
Note that we have 
$$\varphi =\left( \bigoplus_{i=1}^r \varphi_i \right) \oplus \varphi_{\sigma^u},$$
with $\varphi_1, \dots, \varphi_r, \varphi_{\sigma^u}$ trivial on $\mathrm{SL}_2(\mathbb{C})$. Thanks to Proposition \ref{propcuspBC}, we also have 
$$\varphi_{BC_n(\sigma^u)}= \bigoplus_{i=1}^{r'}\varphi'_i$$
conjugate-orthogonal, where $\varphi'_i$ is trivial on $\mathrm{SL}_2(\mathbb{C})$ and for all $i\not = j$, $\varphi'_i\not = \varphi_j'$. We first remark that we have 
$$L(1, BC_n(\sigma^u), \mathrm{As}^+)\not = \infty$$ because the representation $BC_n(\sigma^u)$ is tempered. For all $i\in \{1, \dots, r\}$, we have 
$$L(1, \sigma_i\boxtimes \sigma_i^\vee)=\infty$$
if and only if $\sigma_i \cong \vert \cdot \vert\sigma_i$ which never happens. The following list consists of the possible equations for the poles of the function 
$$\mu \in \mathcal{X}(L) \mapsto L(1, i_Q^G(\rho\otimes \mu), \mathrm{As}^+).$$
\begin{enumerate}
    \item For all $i\in \{1, \dots, r\}$ the representation $\sigma_i$ is supercuspidal and unitary, hence $L(1+2\mu_i, \sigma_i, \mathrm{As}^+)$ has a pole if $1+2\mu_i \in i\mathbb{R}$, denote 
    $$D_1=\{\Re(\mu_i)=-1/2\}.$$
    \item Similarly, $L(1-2\mu_i, \sigma_i, \mathrm{As}^+)$ has a pole if $1-2\mu_i \in i\mathbb{R}$, denote 
    $$D_2= \{\Re(\mu_i)=1/2\}.$$
    \item For all $i<j$, $ L(1+\mu_i+\mu_j, \sigma_i\boxtimes\sigma_j^c)$ can have a pole on $1+\mu_i+\mu_j \in i \mathbb{R}$,
    $$D_3 = \{\Re(\mu_i) + \Re(\mu_j)=-1\}$$
    \item For all $i<j$, $ L(1-\mu_i+\mu_j, \sigma_i^*\boxtimes\sigma_j^c)$ can have a pole on $1-\mu_i+\mu_j \in i \mathbb{R}$,
    $$D_4 =\{\Re(\mu_i) - \Re(\mu_j)=1\} $$
    \item For all $i<j$, $ L(1+\mu_i-\mu_j, \sigma_i\boxtimes\sigma_j^\vee)$ can have a pole on $1+\mu_i-\mu_j \in i \mathbb{R}$,
    $$D_5 =\{\Re(\mu_i) - \Re(\mu_j)=-1\} $$
    \item For all $i<j$, $ L(1-\mu_i-\mu_j, \sigma_i^*\boxtimes\sigma_j^\vee)$ can have a pole on $1-\mu_i-\mu_j \in i \mathbb{R}$,
    $$D_6 =\{\Re(\mu_i) + \Re(\mu_j)=1\} $$
    \item Thanks to the decomposition of $\varphi_{BC_n(\sigma^u)}$ given above, $L(1+\mu_i, \sigma_i \boxtimes BC_n(\sigma^u)^c)$ has a pole for $1+\mu_i \in i\mathbb{R}$, denote 
    $$D_7= \{\Re(\mu_i)=-1\}.$$
    \item Similarly, $L(1-\mu_i, \sigma_i^* \boxtimes BC_n(\sigma^u)^\vee)$ has a pole for $1-\mu_i \in i\mathbb{R}$, denote 
    $$D_8= \{\Re(\mu_i)=1\}.$$
    
\end{enumerate}
Clearly, for all $i\not = j$, we have $D_i\not = D_j$, which means that the poles of $L(1, i_Q^G(\rho\otimes \mu), \mathrm{As}^+)$ for $\rho \otimes \mu = BC_n(\sigma_i)\otimes \mu$ with $\mu \in \mathcal{X}(M)$ are at most simple. It allows us to use Lemma \ref{RestReg}, and therefore the function 
$$\Phi :\mu \in \mathfrak{a}_{M, \mathbb{C}}^* \mapsto \beta_\mu^{\rho}(e_\mu \otimes \phi),$$
is regular for all $\mu$ anti-dominant. As a direct consequence, we obtain the regularity of 
$$\mu \in \mathcal{X}(M) \mapsto I_{i_Q^G(\rho\otimes \mu)}(f) = \beta^{\rho}_\mu (W_{f,\pi_\mu^+ }^1)$$
for all $\mu \in \mathcal{X}(M)$, since the definition of $I_{i_Q^G(\rho\otimes \mu)}(f)$ does not depend on the choice of $Q$.


Let $\lambda \in \mathcal{X}(M)$ be anti-dominant. Assume there exists a representation $\gamma\in \mathrm{Temp_{gen}}(U(n))$, such that $BC_n(\gamma) = \pi_\lambda^+$, and 

$$\begin{array}{cccc}
     \beta_{\pi_\lambda^+} : & \pi_\lambda^+ & \to & \mathbb{C}  \\
     & v & \longmapsto & \beta(W_v)
\end{array}$$
and 
$$J_{\pi_\lambda^+} :  (\pi_\lambda^+)^\vee  \to  \mathbb{C}, $$
such that the relative character of $\pi_\lambda^+$ can be written as
$$I_{\pi_\lambda^+}(f) = \beta_{\pi_\lambda^+} (W_{f, \pi_\lambda^+}^1) = \sum_{v\in \mathcal{B}(\pi_\lambda^+)}\beta_{\pi_\lambda^+}(R(f)v).J_{\pi_\lambda^+}(v^\vee),$$
where $\mathcal{B}(\pi_\lambda^+)$ is an orthonormal basis of $\pi_\lambda^+$. Set $\beta_\lambda=\beta_\lambda^{BC_n(\sigma)} \in \mathrm{Hom}_H(i_Q^G(BC_n(\sigma) \otimes \lambda, \mathbb{C})$, we have the following diagrams. 

$$\xymatrix{
     i_Q^G(BC_n(\sigma)\otimes \lambda)  \ar[rd]_{\beta_\lambda} \ar[r]^{p}  & {\pi_\lambda^+}\ar[d]^{\beta_{\pi_\lambda^+}} \\
      & \mathbb{C}
     }$$
where $p$ is the natural projection. And

$$\xymatrix{
      ({\pi_\lambda^+})^\vee \ar[rd]_{J_{\pi_\lambda^+}} \ar[r]^{\iota}  &  i_Q^G(BC_n(\sigma)^\vee\otimes -\lambda) \ar[d]^{J_\lambda} \\
      & \mathbb{C}
     }$$
where $\iota$ is the natural injection, and by uniqueness of Whittaker functionals, $J_{\pi_\lambda^+}$ and $J_\lambda$ are proportional. We recall Proposition \ref{Wpi}, namely
$$W_{f, i_Q^G(BC_n(\sigma)\otimes \lambda)}^1 = W_{f, {\pi_\lambda^+}}^1 \in \mathcal{W}({\pi_\lambda^+}, \psi_n),$$
for all $f\in C_c^\infty(G)$. Moreover, thanks to Corollary \ref{CoroGKfam}, we can find a function $f\in C_c^\infty(G)$ such that 
$$\beta_\lambda(W_{f, i_Q^G(BC_n(\sigma)\otimes \lambda)}^1) = \beta_\lambda (W_{f, {\pi_\lambda^+}}^1) \not = 0,$$
Thus,
$$\mathrm{Hom}_H(i_Q^G(BC_n(\sigma)\otimes \lambda), \mathbb{C})\not = 0.$$

\begin{coro}[of Proposition \ref{BetaProportional}]
    Let $\gamma\in \mathrm{Temp_{gen}}(U(n))$ be a subquotient of $i_P^{U(n)}(\sigma\otimes \lambda)$, where $P=MU$ is a parabolic subgroup of $U(n)$, $\sigma\in \Irr{M}$ supercuspidal and unitary and $\lambda\in \mathcal{X}(M)$ is real. Then, for all function $f\in C_c^\infty(G)$, we have
    $$ I_{BC_n(\gamma)}(f) = I_{BC_n(i_P^{U(n)}(\sigma\otimes \lambda))}(f).$$
\end{coro}
Recall that we have defined 
$$BC_n(i_P^{U(n)}(\sigma\otimes\lambda)) = i_{Q}^G(BC_n(\sigma)\otimes \lambda).$$
We have shown in part \ref{cuspBC}, and more precisely in Proposition \ref{propcuspBC}, that the base-change of an irreducible supercuspidal and generic representation of a unitary group is an induced representation of supercuspidal representations in $G$. Hence, thanks to Proposition \ref{BetaProportional}, when $\lambda$ is anti-dominant with respect to $Q$, the space $\mathrm{Hom}_H(i_Q^G(BC_n(\sigma)\otimes \lambda), \mathbb{C})$ is at most one dimensional, implying that $\beta_\lambda$ and $\beta_{\pi_\lambda^+} \circ p$ are proportional, where $\pi_\lambda^+$ denotes $BC_n(\gamma)$. Therefore, there exists a constant $c\in \mathbb{C}$, such that the following equality holds for all $f \in C_c^\infty(G)$
$$\beta_{\pi_\lambda^+} =c.\beta_\lambda.$$
Thanks to Corollary \ref{CoroGKfamKirrilov}, we can chose a family $v_\mu\otimes \phi \in i_Q^G(BC_n(\sigma_X))\otimes C_c^\infty(F^n)$, with $\hat{\phi}(0)\not =0$ and the image $W_\mu\in \mathcal{W}(i_Q^G(BC_n(\sigma_X)), \psi_n)$ of the family $v_\mu$ is such that 
$$W_\mu \in C_c^\infty(N_n(E)\backslash P_n(E), X, \psi_n),$$
and
$$\int_{N_n(F)\backslash P_n(F)}W_\mu(p)dp \not = 0$$
converges for all $\mu \in \mathcal{X}(L)$. Since for all $\mu \in \mathcal{X}_{\mathrm{unit}}(L)$, we have 
$$\beta_\mu(v_\mu\otimes \phi) = \int_{N_n(F)\backslash P_n(F)}W_\mu(p)dp,$$
this formula extends to $\mathcal{X}(L)$ by analytic continuation. In particular, when $\mu = \lambda$, 
$$\beta_\lambda(v_\lambda\otimes \phi) = \int_{N_n(F)\backslash P_n(F)}W_\lambda(p)dp = \beta_{\pi_\lambda^+} \circ p (v_\lambda \otimes \phi).$$
Therefore $c = 1$, that is 
$$I_{{\pi_\lambda^+}}(f)= I_{BC_n(i_P^{U(n)}(\sigma\otimes \lambda))}(f).$$
We then have shown that
$$\{ \pi \in BC_n(\mathrm{Temp_{gen}}(U(n))) \mapsto I_\pi(f), \ f\in C_c^\infty(G)\} \subseteq \mathfrak{Z}_{\mathrm{gen}}(U(n))\vert_{\mathrm{Temp_{gen}}(U(n))}.$$
Finally, the action of $\mathfrak{Z}(G)$ on $C_c^\infty(G)$ gives rise to an action of $\mathfrak{Z}(G)$ on 
$$\{ \pi \in \mathrm{Temp}_H(G) \mapsto I_\pi(f), \ f\in C_c^\infty(G)\}$$
by $I_\pi(z.f)=z_\pi I_\pi(f)$, $z\in \mathfrak{Z}(G)$ where $z_\pi$ is a scalar. By the proposition \ref{closedim}, the surjection from $\mathfrak{Z}(G)$ to $\mathfrak{Z}_{\mathrm{gen}}(U(n))$ provides an action of $\mathfrak{Z}_{\mathrm{gen}}(U(n))$ on 
$$\{ \pi \in BC_n(\mathrm{Temp_{gen}}(U(n)) \mapsto I_\pi(f), \ f\in C_c^\infty(G)\},$$
which makes it an ideal of $\mathfrak{Z}_{\mathrm{gen}}(U(n))$. The Corollary \ref{CoroGKfam} shows that 
$$\{ \pi \in BC_n(\mathrm{Temp_{gen}}(U(n)) \mapsto I_\pi(f), \ f\in C_c^\infty(G)\}$$
is included in none of the maximal ideals of $\mathfrak{Z}_{\mathrm{gen}}(U(n))$ and hence 
$$\{ \pi \in BC_n(\mathrm{Temp_{gen}}(U(n)) \mapsto I_\pi(f), \ f\in C_c^\infty(G)\}=\mathfrak{Z}_{\mathrm{gen}}(U(n))$$
\end{proof}

\section{Towards relative trace formula: application to a relative comparison}

In this section, let $E/F$ be a quadratic extension of $p$-adic fields, $G= \mathrm{GL}_n(E)$, $H=\mathrm{GL}_n(F)$ and $S=H\backslash G$. Let $c$ be the non-trivial element in $\mathrm{Gal}(E/F)$. Let $B=TN$ be the Borel subgroup of upper triangular matrices in $G$. We fix $(N, \psi)$ a Whittaker datum for $G$, such that 
$$\psi\begin{pmatrix}
        1 & u_1 &  * & \dots  & * \\
          & \ddots & \ddots& \ddots  & \vdots \\
          & & \ddots & \ddots & * \\
          &  &  &\ddots &   u_{n-1} \\
          & & & & 1
    \end{pmatrix} = \psi_0(\sum_{i=1}^{n-1}u_i),$$
where $\psi_0:E/F \to \mathbb{C}^\times$ is non-trivial, in particular we have 
$$\psi\vert_{N\cap H}\equiv 1, $$
implying $\psi^c = \psi^{-1}$. Let 
$$w_n = \begin{pmatrix} & & 1 \\
                        & \revdots & \\
                        1 & & \end{pmatrix}$$
be the longest element in both Weyl groups $W_G$ and $W_{U(n)}$, and recall that 
$$U(n)=\{g\in G \mid {}^tg^c w_n g = w_n\}.$$
Now choose $(N', \psi')$ a Whittaker datum for $U(n)$, with $N'= N\cap U(n)$ and $\psi'=\psi\vert_{N'}$, and denote by $B' = B\cap U(n)$.
\begin{rem}
    Let $x=\begin{pmatrix}
        1 & u_1 &  * & \dots  & * \\
          & \ddots & \ddots& \ddots  & \vdots \\
          & & \ddots & \ddots & * \\
          &  &  &\ddots &   u_{n-1} \\
          & & & & 1
    \end{pmatrix} \in N'$, for all $i<n-1$ we have 
    $$u_i = -u_{n-i}^c.$$
    Now, for all $i<n/2$ denote by $N'_i$ the simple root groups in $N'$.
    Recall that $\psi'$ is generic, if and only if for all $i<n/2$
    $$\psi'\vert_{N'_i}\not \equiv 1.$$
    We have 
    $$\psi'\vert_{N'_i} : u_i \mapsto \psi_0(u_i-u_i^c)$$
    with 
    $$\psi_0\vert_F =\psi_0\vert_{E^{c=1}} \equiv 1.$$
    Since $ E =F\oplus E^{c=-1}$, we have $E^{c=-1} = \{x-x^c\mid x\in E\}$.
    We obtain 
    $$\psi'\vert_{N'_i}\not \equiv 1,$$
    for all $i<n/2$, because $\psi_0$ is non-trivial. Therefore, $\psi'$ is generic.
\end{rem}

\subsection{Spectral transfer}\label{spectrtransf}
Let $(N, \psi)$ and $(N', \psi')$ be the Whittaker data for $G$ and $U(n)$, respectively, that we chose above. We define the following Bessel distribution for $U(n)$, for all $\pi \in \mathrm{Temp_{gen}}(U(n))$, and all $f'\in C_c^\infty(U(n))$
$$J_\pi(f') = \int_{N'}^*f'_\pi(u)\psi'(u)du,$$
with $f'_\pi(g) = \mathrm{Tr}(\pi(g^{-1})\pi(f'))$ (we refer to Definition \ref{RegInt} for the regularized integral), and \cite{RBP1} provides a similar trace Paley-Wiener theorem (Theorem 2.4.1), namely 
$$\{\pi \in \mathrm{Temp_{gen}}(U(n)) \mapsto J_\pi(f'), \ f'\in C_c^\infty(U(n))\}\cong \mathfrak{Z}_{\mathrm{gen}}(U(n)).$$
\begin{definition}[Spectral transfer]
    Two functions $f\in C_c^\infty(G)$, and $f'\in C_c^\infty(U(n))$ are \emph{transfers} from one to the other if for all $\pi\in \mathrm{Temp_{gen}}(U(n))$
    $$I_{BC_n(\pi)}(f) = J_\pi(f').$$
\end{definition}

Let $x\in G$, $y\in U(n)$. Let $f\in C_c^\infty(G)$, and $f'\in C_c^\infty(U(n))$, the relative orbital integrals 
$$O_\psi(x,f) = \int_{(H\times N)_x\backslash H\times N} f(h^{-1}xu)\psi(u)dhdu,$$
and 
$$O_{\psi'}(y,f')=\int_{(N'\times N')_y\backslash N'\times N'} f'(n_1^{-1}yn_2)\psi'(n_1n_2^{-1})dn_1dn_2,$$
are well defined if and only if $x$ and $y$ satisfy the following property
\begin{itemize}
    \item For all $(h,u)\in (H\times N)_x$, $\psi(u)=1$, 
    \item For all $(n_1, n_2)\in (N'\times N')_y$, $\psi'(n_1n_2^{-1})=1$,
\end{itemize}
where 
$$(H\times N)_x = \{(h,u)\in H\times N, \ h^{-1}xu= x\}$$
and 
$$(N'\times N')_y = \{(n_1,n_2) \in N'\times N', \ n_1^{-1}yn_2=y\}$$
are the stabilizers of $x$ and $y$ for the $H\times N$ and $N'\times N'$ actions by left and right translations, respectively. Let $f\in C_c^\infty(G)$, remark that 
$$O_\psi(x,f) = \int_{(N\cap x^{-1}Hx)\backslash N}\int_{H} f(h^{-1}xu)dh.\psi(u)du.$$
Thus, for all $x\in G$, the orbital integral $O_\psi(x, \cdot)$ factorizes through the surjection 
$$\begin{array}{cccc}
     & C_c^\infty(G) & \longrightarrow & C_c^\infty(H\backslash G) \\
     & f & \longmapsto & \int_Hf(h\cdot)dh
\end{array}.$$
Let $S=H\backslash G$, and consider
$$\begin{array}{cccc}
     & S & \hookrightarrow & G \\
     & Hg & \mapsto & (g^c)^{-1}g 
\end{array}$$
whose image is the set of elements $s\in G $, such that $ss^c = 1$, thanks to Hilbert's Theorem 90. Under this embedding, the right translation action of $G$ on $S$ becomes 
$$s \cdot g = (g^c)^{-1}sg, \ \ s\in S, \ g \in G.$$
Under this action, the stabilizer of $s$ in $N$ is
$$N_s=\{u\in N, \ u^c=sus^{-1}\}.$$
We define the sets of \emph{relevant} elements in $G$, $U(n)$ and $S=H\backslash G$, respectively, by 
$$G_{\mathrm{rel}} := \{g\in G, \ (\psi \boxtimes \psi^{-1})\vert_{(N\times N)_g}\equiv 1\},$$
$$U(n)_{\mathrm{rel}}= \{x\in U(n), \ (\psi' \boxtimes (\psi')^{-1})\vert_{(N'\times N')_x}\equiv 1\},$$
and 
$$S_{\mathrm{rel}} := \{s\in S, \psi\vert_{N_s}\equiv 1\}.$$

\begin{ex}
    Let $w=w_n$ be the longest element in $W_{U(n)}$. For any $n\in N'$, the element $w_nnw_n^{-1}$ belongs to the unipotent radical of the opposite Borel subgroup, $\overline{N'}$, in particular for all $t\in T'$
    $$(N'\times N')_{tw_n} = \{(1,1)\}.$$
    Therefore, $N't\overset{.}{w_n} N'$ is always relevant, in other words
    $$T'w_n \subset N' \backslash U(n)_{\mathrm{rel}}/N'.$$
    Now let $w= 1$, for all $t\in T'$ and all $n\in N'$ we have $tnt^{-1}\in N$, i.e.
    $$(N'\times N')_t = \{(tnt^{-1}, n) \vert \ n\in N'\}.$$
    The corresponding double coset is relevant if and only if 
    $$\psi'(tnt^{-1}n^{-1}) = 1, \forall n\in N',$$
    if and only if $t\in Z_{U(n)}$. Since $\psi'$ is generic, by definition, $N'tN'$ with $t\in Z_{U(n)}$ is always relevant. In other words
    $$Z_{U(n)} \subset N' \backslash U(n)_{\mathrm{rel}}/N'.$$
\end{ex}

\begin{rem}\label{Relelements}
Consider $G$ and the Borel subgroup $B=TN$ of upper triangular matrices. The Bruhat decomposition of $G$ can be written as 
$$G = NN_G(T)N, \ \mathrm{with} \ N_G(T) = T\rtimes W_G.$$
Define 
$$N_G(T)_{\mathrm{rel}}:= N_G(T) \cap G_{rel}.$$
Let $g \in N_G(T) = T\rtimes W_G$. The stabilizer of $g$ is given by 
$$(N\times N)_g = \{ (n_1,n_2) \in N\times N \mid  n_1^{-1}gn_2=g \}.$$
We now follow the description of relevant elements, given by Jacquet-Ye \cite{JacquetYe}. Let $\theta_0:N\to E$ be an algebraic additive character defined by 

$$\theta_0\begin{pmatrix}
        1 & u_1 &  * & \dots  & * \\
          & \ddots & \ddots& \ddots  & \vdots \\
          & & \ddots & \ddots & * \\
          &  &  &\ddots &   u_{n-1} \\
          & & & & 1
    \end{pmatrix} = \sum_{i=1}^{n-1}u_i,$$
and recall that $\psi = \psi_0\circ \theta_0$. An element $g'$ is \emph{Jacquet-Ye relevant} if the character $(n_1,n_2)\mapsto \theta_0 (n_1n_2)$ is trivial on the stabilizer
$$N^{g'}= \{(n_1,n_2) \in N\times N \mid {}^tn_1g'n_2=g'\}.$$
Since $\theta_0$ is an algebraic additive character, and $N^{g'}$ is connected, we have 
$$\theta_0(N^{g'}) = \{0\}, \ \mathrm{or} \ E.$$
Therefore, $\theta_0$ is trivial on $N^{g'}$, if and only if $\psi = \psi_0 \circ \theta_0$ is trivial on $N^{g'}$. Let $w_n = w_l^G$ be the longest element in $G$. For all $g\in G$, we have 
$$N^{w_ng} = (N\times N)_g,$$
because $(n_1,n_2)\in N^{w_ng}$, if and only if, $(w_n{}^tn_1^{-1}w_n, n_2)\in (N\times N)_g$. This shows that an element $g\in N_G(T)$ is relevant if, and only if, $w_ng$ is Jacquet-Ye relevant. Therefore, there exists a standard Levi subgroup $M$ of $G$, such that 
$$w_ng= a_Mw_l^M,$$
with $a_M$ an element of the center $Z_M$ of $M$, and $w_l^M$ is the longest element in $W_M=W_G\cap M$. Finally, an element $g\in N_G(T)_{\mathrm{rel}}$ is of the form 
$$g= a_Mw_l^Mw_n.$$

\end{rem}

\subsection{Parametrization of the orbits}\label{orbits}

We use the same notation as in the previous section. The goal of this section is to construct a natural bijection  
$$N'\backslash U(n)_{\mathrm{rel}}/N' \cong S_{\mathrm{rel}}/N.$$
First of all, to construct this bijection, we need these intermediate results on unipotent groups.

\begin{lem}\label{n-root}
    Let $V$ be a unipotent group (defined over a field of characteristic $0$), and let $n\geq 1$ be an integer. Any element $v\in V$ admits a unique $n$-th root.
\end{lem}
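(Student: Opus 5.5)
The plan is to use the exponential and logarithm maps. A unipotent group $V$ over a field $k$ of characteristic $0$ can be realized as a closed subgroup of the group of upper unitriangular matrices $N_m$ for some $m$. I would therefore first establish the statement for $N_m$ and then deduce it for $V$. On unipotent elements the logarithm is given by the finite sum
$$\log(v)=\sum_{j=1}^{m-1}\frac{(-1)^{j+1}}{j}(v-1)^j,$$
and on nilpotent matrices the exponential is given by the finite sum
$$\exp(X)=\sum_{j=0}^{m-1}\frac{X^j}{j!}.$$
Both sums are finite because $(v-1)^m=0$ and $X^m=0$. They define mutually inverse polynomial bijections between $N_m$ and the Lie algebra $\mathfrak{n}_m$ of strictly upper triangular matrices. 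Division by integers makes sense because the characteristic is $0$.

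For existence, given $v\in V$ I set $w=\exp\bigl(\tfrac1n\log v\bigr)$. Since $\tfrac1n\log v$ and $\log v$ commute, $w^n=\exp(\log v)=v$. I must also check that $w\in V$ and not merely $w\in N_m$. For this I would use the standard fact that for a closed unipotent subgroup $V\subseteq N_m$, the set $\log(V)$ is a Lie subalgebra $\mathfrak{v}$ and $\exp(\mathfrak{v})=V$. A direct argument avoiding the Lie correspondence is also available. The map $t\mapsto \exp(t\log v)$ is a polynomial map $\mathbb{A}^1\to N_m$, and it lies in $V$ for all $t\in\mathbb{Z}$ because it equals $v^t$ there. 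Since $V$ is Zariski closed and $\mathbb{Z}$ is Zariski dense in $\mathbb{A}^1$ in characteristic $0$, the map lies in $V$ for all $t$, in particular for $t=1/n$.

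For uniqueness, suppose $w\in V$ satisfies $w^n=v$. Then $n\log w=\log(w^n)=\log v$. This identity holds because $w$ commutes with itself, so $\log$ turns the power into a multiple; formally, it is the relation $\log(w^n)=n\log w$ for a unipotent $w$, which follows from $\exp(n\log w)=\exp(\log w)^n$ and the injectivity of $\exp$. Hence $\log w=\tfrac1n\log v$, and injectivity of $\log$ gives $w=\exp(\tfrac1n \log v)$.

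The main obstacle is justifying that $w$ lies in $V$, that is, the Zariski-density step. In the setting of the paper the groups are $F$-points of unipotent groups such as $N$ and $N'$ for a $p$-adic field $F$, so I would apply the argument to $V(F)$ inside $N_m(F)$, with $V$ defined by polynomial equations over $F$. The key input is then the following: a polynomial over $F$ vanishing on $\mathbb{Z}$ vanishes identically, since $\mathbb{Z}$ is infinite in characteristic $0$. Applied coordinatewise to the defining equations of $V$ composed with $t\mapsto\exp(t\log v)$, this gives the result.
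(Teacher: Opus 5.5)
Your proposal is correct and follows essentially the same route as the paper, which uses the exponential isomorphism $\mathfrak{V}\to V$ (citing Milne, Prop.~14.32) to turn the $n$-th power map into multiplication by $n$, and then concludes from the invertibility of $n$ in characteristic $0$. The only difference is that you work inside an ambient $N_m$ and check by hand, via the Zariski density of $\mathbb{Z}$, that the root $\exp(\tfrac1n\log v)$ lies in $V$, whereas the paper absorbs this into the cited fact.
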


\begin{proof}
    Let us show that the following morphism is bijective
    $$\begin{array}{cccc}
         & V & \longrightarrow & V \\
         & v & \longmapsto & v^n
    \end{array}.$$
    Since the exponential map provides an isomorphism, of algebraic varieties, between $V$ and its Lie algebra $\mathfrak{V}$ (Proposition 14.32, in \cite{Milne_2017}), and 
    $$\mathrm{exp}(X)^n = \mathrm{exp}(nX),$$
    for all $X\in \mathfrak{V}$, we obtain the following commuting diagram.
    $$\xymatrix{
     V \ar[r]^{(\cdot)^n} & V    \\
     \mathfrak{V} \ar[u]^{\mathrm{exp}} \ar[r]_{n\cdot} &  \mathfrak{V}\ar[u]_{\mathrm{exp}}
     }$$
    Since $n$ is invertible, the map $\mathfrak{V}\to \mathfrak{V}$ given by the multiplication by $n$ is bijective. Therefore, 
    $$\begin{array}{cccc}
         & V & \longrightarrow & V \\
         & v & \longmapsto & v^n
    \end{array}$$
    is a bijection, i.e. for all $v\in V$ there exists a unique element $x\in V$ such that
    $$v= x^n.$$
\end{proof}

\begin{lem}\label{unipdecomp}
    Let $V$ be a unipotent group (defined over a field of characteristic $0$), and $\iota\in \mathrm{Aut}(V)$ be an involution. Denote by 
    $$V^{\iota =1} =\{v\in V \mid  \iota(v)=v\},$$
    and 
    $$V^{\iota = -1}=\{v\in V \mid  \iota(v)=v^{-1}\}.$$
    Then 
    $$V=V^{\iota = 1}V^{\iota = -1}.$$
\end{lem}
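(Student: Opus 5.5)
We use the exponential map, as in the proof of Lemma \ref{n-root}, and then induct on the nilpotency class of $V$. The involution $\iota$ induces an involution $d\iota$ of the Lie algebra $\mathfrak{V}$, which is a Lie algebra automorphism, and $\exp\circ d\iota=\iota\circ\exp$. Since $\exp:\mathfrak{V}\to V$ is bijective, $V^{\iota=1}=\exp(\mathfrak{V}^{+})$ and $V^{\iota=-1}=\exp(\mathfrak{V}^{-})$, where $\mathfrak{V}^{\pm}$ are the $\pm1$-eigenspaces of $d\iota$. Here we use that $\iota(\exp X)=\exp(X)^{-1}=\exp(-X)$ if and only if $d\iota X=-X$, by injectivity of $\exp$. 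Characteristic $0$ gives $\mathfrak{V}=\mathfrak{V}^+\oplus\mathfrak{V}^-$. This decomposition is not enough by itself, since $\exp(X+Y)\neq\exp X\exp Y$ in general, so a genuinely group-theoretic argument is needed.

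We therefore argue by induction on $\dim V$. If $V$ is abelian, $\exp$ is a group isomorphism and the claim follows directly from the eigenspace decomposition. In general, let $Z$ be the last nontrivial term of the lower central series, or the center. It is a characteristic, hence $\iota$-stable, abelian normal subgroup. The quotient $\bar V=V/Z$ is unipotent with an induced involution $\bar\iota$. Given $v\in V$, induction gives $\bar v=\bar a\bar b$ with $\bar\iota(\bar a)=\bar a$ and $\bar\iota(\bar b)=\bar b^{-1}$.

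The key step is lifting these fixed and anti-fixed elements, and we do it by a direct formula. Let $u\in V$ be any element with $\iota(u)\equiv u^{-1}$ modulo $Z$. Set $b=(u\,\iota(u)^{-1})^{1/2}$, using the unique square root from Lemma \ref{n-root}. Then
\[
\iota(b)=(\iota(u)u^{-1})^{1/2}=\bigl((u\iota(u)^{-1})^{-1}\bigr)^{u\text{-conj}}{}^{1/2}.
\]
This suggests a cleaner route that avoids the induction entirely.

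This direct route is the approach we commit to. Given $v\in V$, put $w=\iota(v)^{-1}v$ and note that $\iota(w)=v^{-1}\iota(v)=w^{-1}$. Let $b=w^{1/2}$. By uniqueness of roots, $\iota(b)$ is the square root of $\iota(w)=w^{-1}$, namely $b^{-1}$, so $b\in V^{\iota=-1}$. Set $a=vb^{-1}$. We need $\iota(a)=a$, which is equivalent to $\iota(v)b=vb^{-1}$, that is, $b^{2}=\iota(v)^{-1}v$ rearranged correctly. This is the main obstacle, because the order of the factors matters.

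To fix the order, we instead take $w=v\,\iota(v)^{-1}$? We need $\iota(v)\iota(b)^{-1}=vb^{-1}$, which with $\iota(b)=b^{-1}$ reads $\iota(v)b=vb^{-1}$, i.e. $b^{2}=\iota(v)^{-1}v$. The factors line up when we write $v=ab$ with $b^{2}=\iota(v)^{-1}v$, so the definition $b=(\iota(v)^{-1}v)^{1/2}$ works. The remaining check is $\iota(b)=b^{-1}$. This holds because $\iota(\iota(v)^{-1}v)=v^{-1}\iota(v)=(\iota(v)^{-1}v)^{-1}$, and the square root of an inverse is the inverse of the square root, again by uniqueness in Lemma \ref{n-root}. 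Thus $a=vb^{-1}\in V^{\iota=1}$ and $b\in V^{\iota=-1}$, giving $V=V^{\iota=1}V^{\iota=-1}$, with no induction needed.
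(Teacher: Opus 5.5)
Your committed argument is correct and is essentially the paper's proof: take $b$ to be the unique square root of $\iota(v)^{-1}v$ given by Lemma~\ref{n-root}, use uniqueness of square roots to get $\iota(b)=b^{-1}$, and observe that $a=vb^{-1}=\iota(v)b$ satisfies $\iota(a)=v\iota(b)=vb^{-1}=a$. The abandoned induction sketch and the ``ordering obstacle'' discussion should be deleted, since $b^{2}=\iota(v)^{-1}v$ holds by the very definition of $b$.
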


\begin{proof}
    Let $v\in V$, remark that $\iota(v)^{-1}v\in V^{\iota=-1}$, because
    $$\iota(\iota(v)^{-1}v)= v^{-1}\iota(v) = (\iota(v)^{-1}v)^{-1}.$$
    Thanks to Lemma \ref{n-root}, there exists $b\in V$, such that 
    $$b^2 = \iota(v)^{-1}v,$$
    and we have 
    $$\iota(b)^2 = (\iota(v)^{-1}v)^{-1} = b^{-2},$$
    by uniqueness of the square-root, we have $b\in V^{\iota=-1}$, and $v = \iota(v)b^2$. 
    Now, set $a = vb^{-1}$, we have 
    $$a = \iota(v) b$$
    and therefore 
    $$\iota(a) = vb^{-1} = a.$$
    For an arbitrary $v\in V$, we have obtained a factorization 
    $$v = ab, \ \mathrm{with} \ a\in V^{\iota=1}, \ b\in V^{\iota = -1}, $$
    which finishes the proof.
\end{proof}

The following Lemma is reformulation in our setting of Proposition 2.1 in \cite{Benoist1984}.
\begin{lem}\label{antiinv}
    Let $V$ be a unipotent group (defined over a field of characteristic $0$) and $\iota\in \mathrm{Aut}(V)$ be an involution, then 
    $$V^{\iota = -1} = \{\iota(v)^{-1}v, \ v\in V\}.$$
\end{lem}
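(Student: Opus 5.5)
We need to prove $V^{\iota=-1}=\{\iota(v)^{-1}v : v\in V\}$. The inclusion $\supseteq$ is the computation already carried out in the proof of Lemma \ref{unipdecomp}: for every $v\in V$,
$$\iota(\iota(v)^{-1}v)=v^{-1}\iota(v)=(\iota(v)^{-1}v)^{-1},$$
so $\iota(v)^{-1}v\in V^{\iota=-1}$. All the work is therefore in the reverse inclusion, and the plan is to realize any $b\in V^{\iota=-1}$ directly in the form $\iota(v)^{-1}v$, using the square roots provided by Lemma \ref{n-root}.

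Let $b\in V^{\iota=-1}$. By Lemma \ref{n-root} there is a unique $c\in V$ with $c^2=b$. Applying $\iota$ gives $\iota(c)^2=\iota(b)=b^{-1}$. Since $(c^{-1})^2=b^{-1}$ as well, uniqueness of square roots forces $\iota(c)=c^{-1}$, so $c\in V^{\iota=-1}$. Taking $v=c$ yields
$$\iota(v)^{-1}v=(c^{-1})^{-1}c=c^2=b,$$
which is the desired expression. Hence $V^{\iota=-1}\subseteq\{\iota(v)^{-1}v : v\in V\}$, and the equality follows.

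The only nontrivial input is the existence and uniqueness of square roots in $V$, which is exactly Lemma \ref{n-root} and relies on characteristic $0$ through the exponential. Uniqueness is what makes the argument work: it lets us transport the relation $\iota(b)=b^{-1}$ from $b$ to its square root $c$. No further obstacle is expected, since each step reduces either to uniqueness of roots or to the fact that $\iota$ is a group automorphism.
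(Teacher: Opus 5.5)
Your argument is correct, but it takes a different route from the paper, which gives no proof of its own and only cites Proposition 2.1 of \cite{Benoist1984}. That is a result from the theory of nilpotent symmetric spaces. It describes $V^{\iota=-1}$ through the exponential of the $(-1)$-eigenspace of $d\iota$, so that $b=\exp(X)$ with $d\iota(X)=-X$ equals $\iota(\exp(X/2))^{-1}\exp(X/2)$.

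You bypass the Lie algebra entirely. You use only that $V$ is uniquely $2$-divisible (Lemma \ref{n-root}) and that $\iota$ is a group automorphism, and you transport $\iota(b)=b^{-1}$ to the square root $c$ by uniqueness. This is exactly the trick the paper already uses in the proof of Lemma \ref{unipdecomp}.

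Your route gives a self-contained proof that works as written for the $p$-adic groups to which the lemma is applied. There is no need to adapt a real Lie-group statement ``to our setting''. The same reasoning with $n$-th roots also recovers the paper's subsequent remark that $V^{\iota=-1}$ has unique $n$-th roots: if $c^n=b$ with $b\in V^{\iota=-1}$, then $\iota(c)^n=(c^{-1})^n$, hence $\iota(c)=c^{-1}$.

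The Lie-theoretic route gives one extra structural fact: $V^{\iota=-1}$ is the exponential image of a linear subspace. The lemma as stated does not need this.
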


\begin{rem}
    Proposition 2.1 of \cite{Benoist1984} actually shows that Lemma \ref{n-root} is also true for $V^{\iota=-1}$, although it is not a unipotent group.
\end{rem}

\begin{lem}\label{relUn}
    We have $U(n)_{\mathrm{rel}} = U(n)\cap G_{\mathrm{rel}}$.
\end{lem}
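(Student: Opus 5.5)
The plan is to prove the two inclusions separately. The inclusion $U(n)\cap G_{\mathrm{rel}}\subseteq U(n)_{\mathrm{rel}}$ is immediate: for $x\in U(n)$ we have $(N'\times N')_x\subseteq (N\times N)_x$, and $\psi'=\psi\vert_{N'}$. Hence if $\psi\boxtimes\psi^{-1}$ is trivial on $(N\times N)_x$, then $\psi'\boxtimes(\psi')^{-1}$ is trivial on $(N'\times N')_x$.

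For the converse, I will use the involution of $G$ whose fixed points are $U(n)$,
$$\theta(g)=w_n\,({}^tg^c)^{-1}\,w_n^{-1},$$
so that $U(n)=G^{\theta=1}$. First I check that $\theta$ preserves $N$. Indeed, $({}^tg^c)^{-1}$ sends $N$ to the lower unipotent group, and conjugation by $w_n$ brings it back to $N$. Hence $N'=N^{\theta=1}$.

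Next I check that $\psi\circ\theta=\psi$ on $N$. For $u\in N$ with superdiagonal entries $u_i$, the superdiagonal entries of $\theta(u)$ are $-u_{n-i}^c$, the same computation as in the Remark preceding this subsection. Therefore
$$\psi(\theta(u))=\psi_0\Big(-\sum_i u_i^c\Big).$$
Since $\psi_0$ is trivial on $F$, we have $\psi_0(x+x^c)=1$, so $\psi_0(-x^c)=\psi_0(x)$, and hence $\psi(\theta(u))=\psi(u)$. I expect this verification, including the sign conventions, to be the only computational point.

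Now fix $x\in U(n)_{\mathrm{rel}}$ and set $V=(N\times N)_x$. I let $\theta$ act diagonally on $N\times N$. Applying $\theta$ to $n_1^{-1}xn_2=x$ and using $\theta(x)=x$ shows that $V$ is $\theta$-stable. Moreover,
$$V^{\theta=1}=(N'\times N')_x .$$
The group $V$ is the group of $F$-points of a closed subgroup of the unipotent group $\mathrm{Res}_{E/F}(N\times N)$. It is therefore unipotent, and connected since we are in characteristic $0$. This lets us apply Lemma \ref{unipdecomp} and Lemma \ref{antiinv} to $(V,\theta)$.

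Let $\chi=\psi\boxtimes\psi^{-1}$, a character of $V$ which is $\theta$-invariant by the previous step. By Lemma \ref{unipdecomp}, every $v\in V$ factors as $v=ab$ with $a\in V^{\theta=1}$ and $b\in V^{\theta=-1}$.

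On the first factor, $\chi(a)=1$, because $a\in (N'\times N')_x$, $x$ is relevant, and $\chi$ restricts to $\psi'\boxtimes(\psi')^{-1}$ there.

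On the second factor, Lemma \ref{antiinv} lets us write $b=\theta(w)^{-1}w$ for some $w\in V$. Then
$$\chi(b)=\chi(\theta(w))^{-1}\chi(w)=\chi(w)^{-1}\chi(w)=1 .$$

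Hence $\chi$ is trivial on $V$, which means $x\in G_{\mathrm{rel}}$, and the lemma follows.
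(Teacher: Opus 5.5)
Your proof is correct and follows the paper's argument: the same involution $g\mapsto w_n\,{}^t(g^c)^{-1}w_n$ on $(N\times N)_x$, the factorization from Lemma~\ref{unipdecomp}, triviality on the fixed part $(N'\times N')_x$ by relevance, and Lemma~\ref{antiinv} for the anti-fixed part. The only difference is cosmetic: you apply Lemma~\ref{antiinv} to $V=(N\times N)_x$ itself and use the $\theta$-invariance of $\psi\boxtimes\psi^{-1}$, whereas the paper applies it componentwise to $N$ to show that $\psi$ is trivial on $N^{*=-1}$.
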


\begin{proof}
    The inclusion $ U(n)\cap G_{\mathrm{rel}} \subset U(n)_{\mathrm{rel}} $ is straightforward, using the definitions $N'=N\cap U(n)$ and $\psi'= \psi\vert_{N'}$. 

    Let $x\in  U(n)_{\mathrm{rel}}$, and let 
    $$\begin{array}{cccc}
        \theta' :  & (N\times N)_x & \to & (N\times N)_x \\
         & (n_1, n_2) & \mapsto & (n_1^*, n_2^*)
    \end{array}$$
    be an involution, where $g^* = w_n{}^t(g^{-1})^cw_n$. Let $(n_1, n_2)\in (N\times N)_x$, the involution $\theta'$ is well defined, because 
    \begin{align*}
        (n_1^*)^{-1}xn_2^* & =  (w_n{}^tn_1^cw_n)x(w_n{}^t(n_2^{-1})^cw_n), 
    \end{align*}
    and $x\in U(n)$ implies that $w_nxw_n = ({}^tx^{-1})^c$, therefore
    \begin{align*}
        (n_1^*)^{-1}xn_2^* & =  w_n({}^tn_1{}^tx^{-1}{}^tn_2^{-1})^cw_n, \\
        & = w_n({}^t(n_1^{-1}xtn_2)^{-1})^cw_n, \\
        & = w_n {}^t(x^{-1})^cw_n, \\
        & = x^*, \\
        & = x.
    \end{align*}
    Applying Lemma \ref{unipdecomp} to $V= (N\times N)_x$ and $\iota = \theta'$, we obtain 
    $$(N\times N)_x = (N\times N)_x^{\theta'=1}(N\times N)_x^{\theta'=-1},$$
    where 
    $$(N\times N)_x^{\theta'=1} = \{(u_1, u_2)\in (N\times N)_x, \ \theta' (u_1, u_2) = (u_1, u_2) \},$$
    and 
    $$(N\times N)_x^{\theta'=-1} = \{(u_1, u_2)\in (N\times N)_x, \ \theta' (u_1, u_2) = (u_1, u_2)^{-1} = (u_1^{-1}, u_2^{-1})\}.$$
    In other words,
    $$(N\times N)_x^{\theta'=-1} = \{(u_1, u_2) \in (N\times N)_x, \ u_i^* = u_i^{-1} \}.$$
    By the definition of $\psi$, we have 
    $$\psi(u^*) = \psi(u).$$
    
    Denote by $N^{*=-1}$ the set of elements of $u\in N$ such that $u^*=u^{-1}$, and remark that 
    $$\psi(u)=\psi(u^*)=\psi(u)^{-1}, \ \mathrm{for \ all} \ u\in N^{*=-1},$$
    or equivalently 
    $$\psi^2\vert_{N^{*=-1}}\equiv 1, \ \mathrm{i.e.} \ \psi\vert_{N^{*=-1}}\equiv \pm 1.$$
    Let $u\in N^{*=-1}$, thanks to Lemma \ref{antiinv}, there exists $v\in N$, such that 
    $$u= (v^*)^{-1}v.$$
    Then 
    $$\psi(u) =\psi(v^*)^{-1}\psi(v) = 1, $$
    and therefore, 
    $$\psi\vert_{N^{*=-1}}\equiv 1.$$
    
    Finally, 
    $$(N\times N)_x^{\theta'=1} = \{(u_1, u_2) \in (N\times N)_x, \ u_i = u_i^* \} \cong (N'\times N')_x, $$
    and since $x\in  U(n)_{\mathrm{rel}}$, 
    $$\psi\boxtimes \psi^{-1}\vert_{(N\times N)_x^{\theta'=1}} = \psi'\boxtimes (\psi')^{-1}\vert_{(N'\times N')_x}\equiv 1.$$
    This shows that for $x\in U(n)_{\mathrm{rel}}$, we have 
    $$\psi\boxtimes \psi^{-1}\vert_{(N\times N)_x}\equiv 1,$$
    or equivalently $x\in U(n) \cap G_{\mathrm{rel}}$.
\end{proof} 

Now, the Bruhat decomposition for $G$ can be written as 
$$G= N N_G(T) N $$
and therefore 
$$G_{\mathrm{rel}}=NN_G(T)_{\mathrm{rel}}N,$$
where $N_G(T)_{\mathrm{rel}} := N_G(T)\cap G_{\mathrm{rel}}$. We identify the Weyl group $W_G$ of $G$ with the system of representatives given by permutation matrices, so that we have 
$$N_G(T) = T \rtimes W_G.$$
We obtain 
$$N_{U(n)}(T') = N_G(T)\cap U(n).$$
Thanks to the Bruhat decomposition of $U(n)$, a system of representatives for the $N'$-double relevant classes in $U(n)$ is given by $N_G(T)\cap U(n)_{\mathrm{rel}}$. Applying the above lemma, we finally obtain 
$$N_G(T)\cap U(n)_{\mathrm{rel}} = N_G(T)_{\mathrm{rel}}\cap U(n).$$
In other words, 
$$N'\backslash U(n)_{\mathrm{rel}}/N' \cong N_G(T)_{\mathrm{rel}}\cap U(n).$$


Now, we look for an analogous decomposition for $S_{\mathrm{rel}}$. Recall that we consider the following embedding 
$$\begin{array}{cccc}
     & S & \hookrightarrow & G \\
     & Hg & \mapsto & (g^c)^{-1}g 
\end{array}$$
whose image is the set of elements $s\in G $, such that $ss^c = 1$. Under this embedding, the right translation action of $G$ on $S$ becomes 
$$s \cdot g = (g^c)^{-1}sg, \ \ s\in S, \ g \in G.$$
Under this action, we have 
$$S_{\mathrm{rel}} = \{s\in S, \psi\vert_{N_s}\equiv 1\},$$
where 
$$N_s=\{u\in N, \ u^c=sus^{-1}\}.$$

\begin{lem}\label{Srel1}
    We have $S_{\mathrm{rel}} = S \cap G_{\mathrm{rel}}.$
\end{lem}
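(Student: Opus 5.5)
The plan is to mimic the proof of Lemma \ref{relUn}: find an involution of the stabilizer $(N\times N)_s$ whose fixed points are essentially $N_s$, then split the stabilizer using Lemma \ref{unipdecomp} and Lemma \ref{antiinv}. Throughout, $s\in S$ means $ss^c=1$. The relevant stabilizer is $(N\times N)_s=\{(n_1,n_2) : n_1^{-1}sn_2=s\}$, and $\chi:=\psi\boxtimes\psi^{-1}$ is the character $(n_1,n_2)\mapsto\psi(n_1)\psi(n_2)^{-1}$. The key identity, recalled in this section, is $\psi^c=\psi^{-1}$, i.e. $\psi(u^c)=\psi(u)^{-1}$.

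\emph{Inclusion $S\cap G_{\mathrm{rel}}\subseteq S_{\mathrm{rel}}$.} Let $u\in N_s$. Then $(u^c)^{-1}su=s$, so $(u^c,u)\in(N\times N)_s$. Since $s\in G_{\mathrm{rel}}$ we get $1=\chi(u^c,u)=\psi(u^c)\psi(u)^{-1}=\psi(u)^{-2}$. Thus $\psi^2$ is trivial on $N_s$. To upgrade this to $\psi$ itself, I will argue as in Remark \ref{Relelements}. The group $N_s$ is the group of $F$-points of a connected unipotent $F$-group, and $\theta_0$ restricted to it is an algebraic additive homomorphism. Hence $W:=\theta_0(N_s)$ is an $F$-subspace of $E$. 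Then $\psi_0(2w)=1$ for all $w\in W$ and $2W=W$, so $\psi=\psi_0\circ\theta_0$ is trivial on $N_s$. Making this image argument rigorous (connectedness, and the image being an $F$-subspace) is the only delicate point; it is the same kind of input as in Jacquet--Ye.

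\emph{Inclusion $S_{\mathrm{rel}}\subseteq S\cap G_{\mathrm{rel}}$.} Define $\iota(n_1,n_2)=(n_2^c,n_1^c)$ on $V:=(N\times N)_s$. It preserves $V$: applying $c$ to $n_1^{-1}sn_2=s$ and using $s^c=s^{-1}$ gives $(n_1^c)^{-1}s^{-1}n_2^c=s^{-1}$, and inverting gives $(n_2^c)^{-1}sn_1^c=s$. Clearly $\iota$ is an involutive automorphism. By Lemma \ref{unipdecomp}, $V=V^{\iota=1}V^{\iota=-1}$, so it suffices to check that $\chi$ is trivial on each factor.
\begin{itemize}
\item The fixed points are $V^{\iota=1}=\{(u^c,u) : u\in N_s\}$. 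On these, $\chi(u^c,u)=\psi(u)^{-2}=1$ because $\psi|_{N_s}\equiv1$.
\item By Lemma \ref{antiinv}, every element of $V^{\iota=-1}$ has the form $\iota(v)^{-1}v$ with $v\in V$. Using $\psi(x^c)=\psi(x)^{-1}$ we get $\chi(\iota(n_1,n_2))=\psi(n_2^c)\psi(n_1^c)^{-1}=\psi(n_1)\psi(n_2)^{-1}=\chi(n_1,n_2)$. Hence $\chi(\iota(v)^{-1}v)=1$.
\end{itemize}
Therefore $\chi\equiv1$ on $V$, that is, $s\in G_{\mathrm{rel}}$.

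The unipotent lemmas are applied to $F$-points of unipotent $F$-groups (viewing $N$ as $\mathrm{Res}_{E/F}$), exactly as in the proof of Lemma \ref{relUn}. I expect the main obstacle to be the passage from $\psi^2\equiv1$ to $\psi\equiv1$ on $N_s$ in the first inclusion.
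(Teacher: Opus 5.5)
Your proposal is correct and follows the paper's proof: the same involution $(n_1,n_2)\mapsto(n_2^c,n_1^c)$ of $(N\times N)_s$, the same splitting from Lemma~\ref{unipdecomp}, and the same identification of the fixed points with $N_s$, on which $\psi\boxtimes\psi^{-1}$ restricts to $\psi|_{N_s}^{-2}$; your parametrization $u\mapsto(u^c,u)$ is the correct one, and the only cosmetic difference is that the paper handles the anti-fixed part by the direct computation $n_2=(n_1^c)^{-1}$, hence $\psi(n_1)\psi(n_2)^{-1}=\psi(n_1)\psi(n_1^c)=1$, rather than through Lemma~\ref{antiinv}. The step you flag as delicate does not need the image of $\theta_0$ to be an $F$-subspace: by Lemma~\ref{n-root}, every $u\in N_s$ is $v^2$ with $v\in N_s$, so $\psi(u)=\psi(v)^2=1$ once $\psi^2\equiv 1$ on $N_s$, which is exactly how the paper concludes.
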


\begin{proof}
    Let $s\in S$, recall that an element of $(N \times N)_s$ is of form $(u, sus^{-1})$, with $u\in N$, therefore we consider the following embedding
    $$\begin{array}{cccc}
         & N_s & \hookrightarrow & (N\times N)_s\\
         & u & \mapsto & (u, u^c)
    \end{array}.$$
    Let $\theta$ be the involution defined by 
    $$\begin{array}{cccc}
        \theta :  & (N\times N)_s & \to &  (N\times N)_s \\
         & (u_1, u_2) & \mapsto & (u_2^c, u_1^c)
    \end{array}.$$
    Indeed, $\theta$ preserves $(N\times N)_s$, as for $(u_1,u_2)\in (N\times N)_s$, we have 
    $$u_1^{-1}su_2 = s$$
    taking the conjugate and the inverse of this expression, we obtain 
    $$(u_2^c)^{-1}su_1^c = s,$$
    that is $(u_2^c, u_1^c)\in (N\times N)_s$. We apply Lemma \ref{unipdecomp} to $V = (N\times N)_s$ and $\iota = \theta$, we obtain
    $$(N\times N)_s = (N\times N)_s^{\theta=1}(N\times N)_s^{\theta=-1},$$
    where 
    $$(N\times N)_s^{\theta=1} = \{(u_1, u_2)\in (N\times N)_s, \ \theta (u_1, u_2) = (u_1, u_2) \},$$
    and 
    $$(N\times N)_s^{\theta=-1} = \{(u_1, u_2)\in (N\times N)_s, \ \theta (u_1, u_2) = (u_1, u_2)^{-1} = (u_1^{-1}, u_2^{-1})\}.$$
    In other words,
    $$(N\times N)_s^{\theta=-1} = \{(u_1, (u_1^c)^{-1}) \in N\times N \mid \ (u_1^c)^{-1} = su_1s^{-1}\}.$$
    For $(u_1,(u_1^c)^{-1})\in (N\times N)_s^{\theta=-1}$, we have 
    $$\psi(u_1)\psi((u_1^c)^{-1})^{-1} = \psi(u_1)\psi(u_1^c) = \psi(u_1)\psi(u_1)^{-1}=1.$$
    It implies that 
    $$\psi\boxtimes \psi^{-1}\vert_{(N\times N)_s} = \psi\boxtimes \psi^{-1}\vert_{(N\times N)_s^{\theta=1}}.$$
    Finally, 
    \begin{align*}
        (N\times N)_s^{\theta=1} & = \{(u, sus^{-1}) \in (N\times N)_s, \ u^c = sus^{-1}\} \cong N_s
        .
    \end{align*}
    Through this isomorphism, we obtain 
    $$\psi\boxtimes \psi^{-1}\vert_{(N\times N)_s^{\theta = 1}} = \psi\vert_{N_s}^2.$$
    If $s\in S_{\mathrm{rel}}$, we have 
    $$\psi\vert_{N_s} \equiv 1,$$
    and therefore
    $$\psi\boxtimes \psi^{-1}\vert_{(N\times N)_s^{\theta = 1}} \equiv 1.$$
    If $s\in S\cap G_{\mathrm{rel}}$, then
    $$\psi\boxtimes \psi^{-1}\vert_{(N\times N)_s^{\theta = 1}} = \psi\vert_{N_s}^2 \equiv 1.$$
    Since $N_s$ is a unipotent group, we apply Lemma \ref{n-root}, and we get that for all $u\in N_s$, there exists $v\in N_s$ such that $u=v^2$, that is 
    $$\psi(u) = \psi(v)^2 = 1.$$
    Therefore, 
    $$\psi\vert_{N_s} \equiv 1,$$
    meaning that $s\in S_{\mathrm{rel}}$.
    To conclude, we have shown that for all $s\in S$
    $$\psi\boxtimes \psi^{-1}\vert_{(N\times N)_s^{\theta = 1}} \equiv 1$$
    is equivalent to 
    $$\psi\vert_{N_s} \equiv 1,$$
    that is $S_{\mathrm{rel}} = S\cap G_{\mathrm{rel}}$.
\end{proof}

\begin{lem}\label{relS2}
    We have $S\cap G_{\mathrm{rel}} = (N_G(T)_{\mathrm{rel}}\cap S)\cdot N$.     
\end{lem}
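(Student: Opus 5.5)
Both $S\cap G_{\mathrm{rel}}$ and $(N_G(T)_{\mathrm{rel}}\cap S)\cdot N$ are stable under the twisted action $s\cdot u=(u^c)^{-1}su$ of $N$. Indeed, if $s'=(u^c)^{-1}su$, then $(N\times N)_{s'}$ is conjugate to $(N\times N)_s$ by $(u^c,u)$. Since $\psi(u^cnu^{c,-1})=\psi(n)$, relevance is preserved. The inclusion $\supseteq$ is therefore immediate. For $\subseteq$, let $s\in S\cap G_{\mathrm{rel}}$. By the Bruhat decomposition $G_{\mathrm{rel}}=NN_G(T)_{\mathrm{rel}}N$, we write $s=n_1 g n_2$ with $g=a_Mw_l^Mw_n\in N_G(T)_{\mathrm{rel}}$ (Remark \ref{Relelements}). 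The plan is to use the twisted $N$-action to move $s$ to an element of $N_G(T)_{\mathrm{rel}}$ that still satisfies $xx^c=1$.

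First I replace $s$ by $s\cdot n_1^{c}{}^{-1}$, or more precisely by $(n_1^{-1})^c$-twisting on the appropriate side, so that $s=gn$ for some $n\in N$. Concretely, $(n_1^{c})\cdot$ applied on the right gives $n_1^{c}{}^{\!\!-1}$... I will organize it as follows: set $s_1=s\cdot (n_1^{-1})^c$, which lies in $N^{c}n_1^{-1}\cdots$. To make this clean, I will use uniqueness in the Bruhat decomposition. Write $s=n_1gn_2$ with $n_2$ in $N_w:=N\cap w^{-1}\overline{N}w$ (unique decomposition, $w$ the Weyl part of $g$). The condition $ss^c=1$ gives
\[
n_1gn_2\,n_1^cg^cn_2^c=1,
\qquad\text{equivalently}\qquad
g n_2 n_1^c g^c = n_1^{-1}(n_2^c)^{-1}.
\]
The Bruhat cell of the left side has Weyl component $w\cdot w^c=w^2$ times torus data. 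Since the right side lies in $N$, uniqueness of Bruhat coordinates forces the Weyl and torus parts to satisfy $gg^c=1$ modulo controlled unipotent pieces. It also forces a relation between $n_2n_1^c$ and $n_1,n_2$. Because $g=a_Mw_l^Mw_n$ is built from the longest element and a central torus element of $M$, I expect $w^2=1$ up to torus, and then $gg^c=1$, so $g\in S$.

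Next, I apply the twisted action by $u=n_2^{-1}$, possibly composed with a correction. This gives
\[
s\cdot n_2^{-1}=(n_2^{-1})^{c,-1}\,n_1gn_2\,n_2^{-1}=n_2^c n_1 g .
\]
So I need $n_2^cn_1\in N\cap g\,\overline{N}\,g^{-1}$ to be absorbable. Taking $c$ of $s\,s^c=1$ shows that $n_2^cn_1$ lies in the twisted stabilizer-type set
\[
\{v\in N\cap gNg^{-1}\mid v\,(g v g^{-1})^c \cdots =1\},
\]
which is the anti-invariant part $V^{\iota=-1}$ of an involution $\iota(v)=g(v^c)^{-1}g^{-1}$ (up to inverses) on the unipotent group $V=N\cap g N g^{-1}$. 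By Lemma \ref{antiinv}, such an element has the form $\iota(v)^{-1}v$. Hence a further twist by an element of $N$ kills it, and we arrive at $s\cdot u=g$ with $g\in N_G(T)_{\mathrm{rel}}\cap S$.

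The main obstacle is this last bookkeeping: checking that $\iota$ really is an involution of a unipotent group, that $g$ normalizes the relevant piece, and that the resulting anti-invariance condition matches Lemma \ref{antiinv} exactly. Using the shape $g=a_Mw_l^Mw_n$, together with $g^c=g^{-1}$ (which follows from the cell computation), should make $\iota^2=\mathrm{id}$ a direct check. If the uniqueness argument in the first step is delicate, I would instead fall back on Lemma \ref{unipdecomp} applied to $N$ with the involution $u\mapsto g(u^c)g^{-1}$ restricted to $N\cap gNg^{-1}$.
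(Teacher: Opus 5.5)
Your overall strategy works, and in coordinates it is the paper's argument. Under $u\mapsto (u,g^{-1}ug)$, which identifies $N\cap gNg^{-1}$ with $(N\times N)_g$, the paper's involution $\theta(u_1,u_2)=(u_2^c,u_1^c)$ becomes $x\mapsto gx^cg^{-1}$, and both proofs finish with Lemma \ref{antiinv}. However, the step everything hinges on, $gg^c=1$, is not established by what you wrote.

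\textbf{The step $gg^c=1$.} Your claim that $g\,n_2n_1^c\,g^c$ lies in the Bruhat cell with Weyl component $w^2$ is false. For $n=2$ and $g=w_2$, the set $gNg^c=\overline{N}$ meets both $B$ and $Bw_2B$. The shape $g=a_Mw_l^Mw_n$ cannot supply $gg^c=1$ either:
\begin{itemize}
\item for $n=3$ and $M=G_2\times G_1$, the element $w_l^Mw_3$ is relevant but is a $3$-cycle, so $w^2\neq 1$;
\item even when $w^2=1$, one still needs $a_M\,wa_M^cw^{-1}=1$, which fails for instance for $g=a\in Z_G$ with $N_{E/F}(a)\neq 1$.
\end{itemize}
Membership of $g$ in $S$ must come from $s\in S$. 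The fix uses the identity you already wrote down. From $g(n_2n_1^c)g^c\in N$ you get $g\in N(g^c)^{-1}N$. Since $(g^c)^{-1}\in N_G(T)$ and $N_G(T)$ is a system of representatives for $N\backslash G/N$, this forces $g^c=g^{-1}$. This is exactly the first step of the paper's proof.

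\textbf{The remaining bookkeeping.} Once $gg^c=1$ is known, the rest is immediate, but it should be stated with the correct map.
\begin{itemize}
\item After $s\cdot n_2^{-1}=vg$ with $v=n_2^cn_1$, the condition $vg(vg)^c=1$ reads $v\,(gv^cg^{-1})=1$.
\item Use the automorphism $\iota(x)=gx^cg^{-1}$. Your map $x\mapsto g(x^c)^{-1}g^{-1}$ is an anti-automorphism, to which Lemma \ref{antiinv} does not apply as stated.
\item Because $g^c=g^{-1}$, $\iota$ preserves $V=N\cap gNg^{-1}$ and satisfies $\iota^2=\mathrm{id}$. The relation above then shows $v\in V$ and $\iota(v)=v^{-1}$.
\item Lemma \ref{antiinv}, applied with $\iota(y)$ in place of $y$, gives $v=z^{-1}\iota(z)$ for some $z\in V$.
\item Hence $vg=z^{-1}gz^c=g\cdot z^c$, so $s=g\cdot(z^cn_2)$ with $g\in N_G(T)_{\mathrm{rel}}\cap S$.
\end{itemize}
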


\begin{proof}
    Let us first show that $N_G(T)\cap S$ is a system of representatives for the $N$-orbits in $S$. 

    First of all, we know that $N_G(T)$ is a system of representatives for the $N\times N$-orbits in $G$, thanks to the Bruhat decomposition. Under the embedding 
    $$\begin{array}{cccc}
     & S & \hookrightarrow & G \\
     & Hg & \mapsto & (g^c)^{-1}g 
    \end{array},$$
    for an element $s\in S$, we have 
    $$s\cdot N \subset NsN.$$
    Now, for all $g \in N_G(T)$, if $(NgN)\cap S\not = \emptyset$ then $g\in N_G(T)\cap S$. Indeed, let $g \in N_G(T)$, such that $(NgN)\cap S\not = \emptyset$. There exists $(n_1,n_2)\in N\times N$, such that 
    $$n_1^cg^cn_2^c = n_2^{-1}g^{-1}n_1^{-1}.$$
    In other words, 
    $$Ng^cN \cap Ng^{-1}N\not = \emptyset.$$
    Since $N_G(T)$ is a system of representatives for the $N\times N$-orbits in $G$, we obtain 
    $$Ng^cN = Ng^{-1}N,$$
    and therefore $g^c=g^{-1}$.

    It remains to show that for all $g\in N_G(T)$, $(NgN)\cap S$ contains at most one $N$-orbit in $S$. According to the previous observation, it suffices to show that for all $s\in S$, 
    $$NsN\cap S = s\cdot N.$$
    We use again the involution
    $$\begin{array}{cccc}
        \theta :  & N\times N & \to &  N\times N \\
         & (u_1, u_2) & \to & (u_2^c, u_1^c)
    \end{array}.$$
    We have 
    $$(N\times N)^{\theta= 1} = \{(u, u^c), u \in N\}.$$
    In other words, 
    $$NsN\cap S = s\cdot N$$
    is equivalent to 
    $$s\cdot (N\times N) \cap S = s\cdot (N\times N)^{\theta= 1}.$$
    Let $(u_1, u_2)\in N\times N$ such that $s\cdot (u_1,u_2)\in S$, we show that 
    $$(u_1,u_2)\in (N\times N)_s(N\times N)^{\theta =1}.$$
    We have 
    $$(s\cdot (u_1,u_2))^c = (s\cdot (u_1,u_2))^{-1}, $$
    hence 
    $$s^c\cdot (u_1^c,u_2^c) = s^{-1}\cdot (u_2,u_1),$$
    and then, since $s^c = s^{-1}$, 
    $$s\cdot (u_2^c,u_1^c) = s\cdot (u_1,u_2).$$
    This implies that 
    $$s^c\cdot ((u_1,u_2)\theta(u_1,u_2)^{-1}) = s,$$
    and therefore
    $$(u_1,u_2)\theta(u_1,u_2)^{-1} \in (N\times N)_s.$$
    We apply Lemma \ref{antiinv} to $V=(N\times N)_s$ and $\iota =\theta\vert_{(N\times N)_s}$. We have already shown that ${(N\times N)_s}$ is $\theta$-stable. The element $(u_1,u_2)\theta(u_1,u_2)^{-1} \in (N\times N)_s$, satifies
    $$\theta( (u_1,u_2)\theta(u_1,u_2)^{-1}) = \theta (u_1,u_2)(u_1,u_2)^{-1} = ( (u_1,u_2)\theta(u_1,u_2)^{-1})^{-1},$$
    that is $(u_1,u_2)\theta(u_1,u_2)^{-1} \in (N\times N)_s^{\theta =-1}$.
    Hence, there exists an element $v\in(N\times N)_s$, such that 
    $$(u_1,u_2)\theta(u_1,u_2)^{-1} = \theta(v)^{-1}v,$$
    or equivalently
    $$\theta(u_1,u_2) = v^{-1}\theta(v)(u_1,u_2).$$
    Set $x = \theta(v)(u_1,u_2)$, we have 
    $$\theta(x) = v\theta(u_1,u_2) = v(v^{-1}\theta(v)(u_1,u_2)) = \theta(v)(u_1,u_2) = x,$$
    therefore $x\in (N\times N)^{\theta =1}$, and 
    $$(u_1, u_2) = \theta(v)^{-1} x \in (N\times N)_s(N\times N)^{\theta =1}.$$
    So far we have shown that 
    $$s\cdot (N\times N) \cap S \subset s\cdot (N\times N)_s (N\times N)^{\theta= 1},$$
    but $s\cdot (N\times N)_s = s$, thus
    $$s\cdot (N\times N) \cap S \subset s\cdot (N\times N)^{\theta= 1}.$$
    Since the inverse inclusion is direct, we obtain
    $$s\cdot (N\times N) \cap S = s\cdot (N\times N)^{\theta= 1}.$$
    Combining the fact that 
    $$S = (N_G(T)\cap S) \cdot N$$
    with Lemma \ref{Srel1}, we conclude that 
    $$S\cap G_{\mathrm{rel}} = (N_G(T)_{\mathrm{rel}}\cap S)\cdot N. $$
    
\end{proof}

We conclude this section with the following lemma 

\begin{lem}
    We have 
    $$ N_G(T)_{\mathrm{rel}}\cap U(n) = N_G(T)_{\mathrm{rel}}\cap S.$$
    In particular,
    $$N'\backslash U(n)_{\mathrm{rel}}/N' \cong S_{\mathrm{rel}}/N.$$
    
\end{lem}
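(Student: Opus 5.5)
The plan is to reduce the first equality to a direct matrix computation, using the explicit shape of relevant Weyl-type elements from Remark~\ref{Relelements}. By that remark, every $g\in N_G(T)_{\mathrm{rel}}$ can be written as
$$g=a_Mw_l^Mw_n,$$
where $M$ is a standard Levi subgroup, $a_M\in Z_M$ and $w_l^M$ is the longest element of $W_M$. I will check that membership of such a $g$ in $U(n)$ and membership in $S$ reduce to the same equation, up to applying $c$. Set $a=a_M$ and $\sigma=w_l^Mw_n$. The permutation matrices $w_l^M$ and $w_n$ are real, symmetric and involutive, so $\sigma^{-1}=w_n w_l^M$. Since $a$ is central in $M$ and $w_l^M\in M$, the matrix $a$ (and likewise $a^c$) commutes with $w_l^M$.

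First, the condition for $S$. Under the embedding $Hg\mapsto (g^c)^{-1}g$, the image of $S$ is $\{s: ss^c=1\}$. For our $g$ this reads
$$a\,\sigma\, a^c\,\sigma=1.$$

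Second, the condition for $U(n)$, namely ${}^tg^c w_n g=w_n$. Transposition gives ${}^tg={}^tw_n\,{}^tw_l^M\,{}^ta=w_nw_l^Ma$. Hence the condition becomes
$$w_nw_l^Ma^c\,w_n\,a\,w_l^Mw_n=w_n.$$
Cancelling the leading $w_n$ and commuting $a^c$ past $w_l^M$, this is equivalent to
$$a^c\,\sigma\,a\,\sigma=1.$$

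This is exactly the image under $c$ of the $S$-condition, because $\sigma^c=\sigma$. Since $c$ is an involution, the two conditions are equivalent, which gives $N_G(T)_{\mathrm{rel}}\cap U(n)=N_G(T)_{\mathrm{rel}}\cap S$. The only delicate points are the bookkeeping of transposes and inverses of permutation matrices, and the commutation $aw_l^M=w_l^Ma$. The latter holds because $w_l^M$ is block diagonal with respect to $M$, while $a$ is scalar on each block.

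For the ``in particular'', I will assemble the two sides separately. On the unitary side, Lemma~\ref{relUn} together with the Bruhat decomposition of $U(n)$ (with $N_{U(n)}(T')=N_G(T)\cap U(n)$), as recorded just before Lemma~\ref{Srel1}, gives $N'\backslash U(n)_{\mathrm{rel}}/N'\cong N_G(T)_{\mathrm{rel}}\cap U(n)$. On the symmetric side, Lemmas~\ref{Srel1} and~\ref{relS2} give $S_{\mathrm{rel}}=(N_G(T)_{\mathrm{rel}}\cap S)\cdot N$.

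The main obstacle is to check that this last parametrization is injective, i.e.\ that distinct elements of $N_G(T)_{\mathrm{rel}}\cap S$ lie in distinct $N$-orbits. I expect this to follow from the proof of Lemma~\ref{relS2}. There, one shows that each $NgN$ meets $S$ in at most one $N$-orbit, namely $NsN\cap S=s\cdot N$. Moreover $s\cdot N\subset NsN$, and distinct elements of $N_G(T)$ give distinct $N\times N$ double cosets. Hence $S_{\mathrm{rel}}/N\cong N_G(T)_{\mathrm{rel}}\cap S$. Composing this with the unitary-side bijection and the equality just proved yields
$$N'\backslash U(n)_{\mathrm{rel}}/N'\cong S_{\mathrm{rel}}/N.$$
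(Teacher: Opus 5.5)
Your proposal is correct and follows essentially the same route as the paper: both rest on the normal form $g=a_Mw_l^Mw_n$ from Remark~\ref{Relelements}, which the paper packages as the identity $w_n\,{}^tg\,w_n=g$ (so that the unitary condition $w_n\,{}^t(g^{-1})\,w_n=g^c$ collapses to the $S$-condition $g^{-1}=g^c$), and this is equivalent to your observation that the two defining equations are exchanged by $c$. The ``in particular'' is then assembled exactly as you do, from Lemma~\ref{relUn} with the Bruhat decomposition of $U(n)$ on one side and Lemmas~\ref{Srel1} and~\ref{relS2} on the other.
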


\begin{proof}
    Recall that 
    $$N_G(T)_{\mathrm{rel}}\cap U(n) = \{n\in N_G(T)_{\mathrm{rel}}, \ w_n{}^t(n^{-1})w_n = n^c \},$$
    and 
    $$N_G(T)_{\mathrm{rel}}\cap S = \{n\in N_G(T)_{\mathrm{rel}}, \ n^{-1} = n^c \}.$$
    According to Remark \ref{Relelements}, an element $n\in N_G(T)_{\mathrm{rel}}$ is of the form 
    $$n= a_Mw_l^Mw_n$$
    where $M$ is a Levi subgroup of $G$, $a_M$ is an element of the center $Z_M$ of $M$ and $w_l^M$ is the longest element in the Weyl group $W_M$ of $M$. Remark that 
    $$w_n{}^tnw_n = w_n(w_nw_l^Ma_M)w_n=a_Mw_l^Mw_n= n.$$
    If, moreover, $n\in N_G(T)_{\mathrm{rel}}\cap U(n)$, then 
    $$n = w_n{}^tnw_n = (n^{c})^{-1} \in  N_G(T)_{\mathrm{rel}}\cap S.$$ 
    Finally, if $n\in N_G(T)_{\mathrm{rel}}\cap S$, then 
    $$w_n{}^tn^{-1}w_n = n^{-1} = n^c,$$
    that is $n\in N_G(T)_{\mathrm{rel}}\cap U(n)$.
    Therefore, 
    $$ N_G(T)_{\mathrm{rel}}\cap U(n) = N_G(T)_{\mathrm{rel}}\cap S.$$
    Thanks to Lemma \ref{relUn}, we have 
    $$N'\backslash U(n)_{\mathrm{rel}}/N' \cong N_G(T)_{\mathrm{rel}}\cap U(n),$$
    that is, 
    $$N'\backslash U(n)_{\mathrm{rel}}/N' \cong N_G(T)_{\mathrm{rel}}\cap S.$$
    Lemma \ref{Srel1} and Lemma \ref{relS2}, provide
    $$S_{\mathrm{rel}}/N = S\cap G_{\mathrm{rel}}/N \cong N_G(T)_{\mathrm{rel}}\cap S,$$
    and therefore
    $$N'\backslash U(n)_{\mathrm{rel}}/N' \cong S_{\mathrm{rel}}/N.$$
\end{proof}

\subsection{Geometric transfer}\label{geotransf}
In this section, let $E/F$ be a quadratic extension of $p$-adic fields, $G= \mathrm{GL}_n(E)$, $H=\mathrm{GL}_n(F)$ and we fix, as in previous sections, $(N, \psi)$ and $(N', \psi')$ Whittaker data for $G$ and $U(n)$, respectively.

\begin{conj}
    Two functions $f\in C_c^\infty(G)$, and $f'\in C_c^\infty(U(n))$ are \emph{transfers} from one to the other, if and only if 
     $$ O_{\psi}(x, f)=O_{\psi'}(y,f'),$$
     for $ y = x \in N_G(T)_{\mathrm{rel}}\cap U(n) = N_G(T)_{\mathrm{rel}}\cap S$.
\end{conj}

\begin{prop}
     If two functions $f\in C_c^\infty(G)$, and $f'\in C_c^\infty(U(n))$ are {transfers} from one to the other, then
     $$O_{\psi}(1,f)=O_{\psi'}(1,f').$$
\end{prop}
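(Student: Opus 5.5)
The plan is to express both orbital integrals at the identity as Plancherel-type integrals of the corresponding relative characters, and then compare them spectrally using the transfer identity $I_{BC_n(\pi)}(f)=J_\pi(f')$.

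\textbf{The $U(n)$ side.} The orbital integral at $1$ is
$$O_{\psi'}(1,f')=\int_{N'}f'(u)\psi'(u)^{-1}du,$$
since the stabilizer of $1$ is the diagonal copy of $N'$. Here we use the Whittaker--Plancherel formula of Lapid--Mao / Beuzart-Plessis for $U(n)$. Applying the Plancherel formula to $f'$, then integrating over $N'$ against $\psi'$ (with the inversion formula of the type of Proposition 2.143 in \cite{RBPPlanch}), should give
$$O_{\psi'}(1,f')=\int_{\mathrm{Temp_{gen}}(U(n))}J_\pi(f')\,d\mu_{U(n)}(\pi),$$
where $d\mu_{U(n)}$ is the Plancherel measure of $U(n)$ restricted to generic tempered representations. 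We may assume this, since $\mathrm{Temp}(U(n))/\mathrm{stab}\cong \mathrm{Temp_{gen}}(U(n))$.

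\textbf{The $S$ side.} The element $1\in S$ is the coset $H$, so $O_\psi(1,f)$ is the Whittaker coefficient at the identity of the function $\varphi_f(g)=\int_H f(hg)\,dh\in C_c^\infty(S)$:
$$O_\psi(1,f)=\int_N\varphi_f(u)\psi(u)\,du.$$
We then apply the Plancherel decomposition of $L^2(H\backslash G)$ from \cite{RBPPlanch}. Its spectrum is $\mathrm{Temp}_H(G)=BC_n(\mathrm{Temp_{gen}}(U(n)))$, the $H$-invariant functional is $\beta$, and the Plancherel measure is the pushforward of $d\mu_{U(n)}$ under $BC_n$, up to explicit constants that Beuzart-Plessis computes via the same base change. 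Evaluating the Whittaker coefficient of $\varphi_f$ at $1$ spectrally gives, for each $\pi$, the composite $\beta\otimes \mathbf{ev}_1$ applied to the image of $\pi(f)$. By the computation preceding the definition of $I_\pi$, this composite is exactly $I_\pi(f)$. Hence
$$O_\psi(1,f)=\int_{\mathrm{Temp_{gen}}(U(n))}I_{BC_n(\pi)}(f)\,d\mu_{U(n)}(\pi).$$
Substituting $I_{BC_n(\pi)}(f)=J_\pi(f')$ then yields the claim.

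\textbf{Main obstacle.} The hardest step is justifying this spectral expansion of $O_\psi(1,f)$. The integral over $N$ does not converge absolutely after the spectral decomposition, so one must use regularized integrals (Definition~\ref{RegInt}) as in \cite{LapidMao} and exchange $\int_N^*$ with the Plancherel integral. I would try first to reduce to the case where $W_{f,\pi}^1|_{P_n}$ is compactly supported modulo $N$, using Corollary~\ref{CoroGKfam} together with the Bernstein center action. In that case the exchange is legitimate, and the general case follows by the $\mathfrak{Z}(G)$-linearity of both sides. It also remains to check that the normalizations match, that is, that the constant in Beuzart-Plessis' Plancherel formula is absorbed by our choice of the scalar product $(\cdot,\cdot)^{\mathrm{Whitt}}$ and of measures. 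I would fix the measures so that this constant is $1$.
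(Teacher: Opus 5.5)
Your overall strategy, the $U(n)$ side, and the final substitution of $I_{BC_n(\pi)}(f)=J_\pi(f')$ all agree with the paper. The $S$ side, however, has a genuine gap.

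You first decompose $\varphi_f$ in $L^2(H\backslash G)$, then take its $(N,\psi)$-coefficient. You then assert that, for each $\pi$, this gives $\beta\otimes\mathbf{ev}_1$ applied to the image of $\pi(f)$, ``by the computation preceding the definition of $I_\pi$''. That computation does not apply in this order. It is the Whittaker inversion formula over $N$ (Proposition 2.143 of \cite{RBPPlanch}) for matrix coefficients of $\pi$ on $G$. It yields $W_{f,\pi}=\sum_W\overline{W}\otimes R(f^\vee)W$, and only afterwards is $\beta$ applied in the first variable. After an $L^2(S)$ decomposition, the $\pi$-component of $\varphi_f$ is built from $H$-period coefficients $x\mapsto\beta(\pi(x)v)$. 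You would therefore need a different inversion identity: one expressing a regularized integral $\int^*_{(N\cap H)\backslash N}\beta(\pi(u)v)\psi(u)\,du$ as $c_\pi W_v(1)$, with $c_\pi$ actually computed. You would also need to interchange this integral with the spectral integral. Neither is available in the paper, and a constant $c_\pi$ that may depend on $\pi$ cannot be absorbed by fixing measures.

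Your proposed justification of the interchange also fails. Corollary~\ref{CoroGKfam} only produces particular $f$ for which $W^1_{f,\pi}|_{P_n}$ is compactly supported modulo $N$. The space of such $f$ is a $\mathfrak{Z}(G)$-submodule, not all of $C_c^\infty(G)$. An identity between two linear functionals on a submodule says nothing about their values off it.

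There is also a slip. The $N$-orbit of the base point is $(N\cap H)\backslash N$, and $u\mapsto\varphi_f(u)\psi(u)$ is left $(N\cap H)$-invariant because $\psi|_{N\cap H}\equiv1$. So your $\int_N$ diverges and must be replaced by $\int_{(N\cap H)\backslash N}$.

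The fix is to reverse the order, as the paper does. First unfold
\[
\int_{(N\cap H)\backslash N}\varphi_f(u)\psi(u)\,du=\int_{(N\cap H)\backslash H}\int_N f(h^{-1}u)\psi(u)\,du\,dh .
\]
Then apply the Whittaker--Plancherel formula on $G$ (Proposition 2.142 of \cite{RBPPlanch}) to the inner integral, which gives $\int_{\mathrm{Temp}(G)}W_{f,\pi}(h,1)\,d\mu_G(\pi)$. At this stage the computation preceding the definition of $I_\pi$ genuinely applies. Finally, Corollary 3.51 of \cite{RBPPlanch} performs the $(N\cap H)\backslash H$-integration and yields $\int_{\mathrm{Temp_{gen}}(U(n))}\beta(W^1_{f,BC_n(\pi')})\,d\mu_{U(n)}(\pi')$ directly, with the $U(n)$ Plancherel measure already built in. That corollary is exactly the pointwise statement you need. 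There is no reason to go through the $L^2(S)$ Plancherel formula and then try to recover pointwise information from it.
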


\begin{proof}
    Let $f\in C^\infty_c(G)$, recall that the orbital integral on the trivial class is given by
    $$O_\psi(1,f) = \int_{(H\times N)_1\backslash H\times N} f(h^{-1}u)\psi(u)dhdu.$$
    The stabilizer
    $$(H\times N)_1=\left\{(h,n)\in H\times N \vert \ h^{-1}n=1\right\}$$
    can be described as
    $$(H\times N)_1=\left\{(n,n)\in H\times N \vert \ n\in N\cap H\right\}\cong N\cap H.$$
    Therefore, 
    $$O_\psi(1,f) = \int_{(N\cap H)\backslash H} \int_{N}f(h^{-1}u)\psi(u)dudh.$$
    We have defined in Section \ref{WhittSection}, the following function
    $$W_{f, \pi}(g_1, g_2):= W_{f_\pi}(g_1, g_2)= \int_N^* f_\pi(g_1^{-1}ug_2)\psi_n(u)^{-1} du, \ (g_1,g_2)\in G\times G.$$
    Thanks to the Plancherel-Whittaker formula (see Proposition 2.142 in \cite{RBPPlanch}), we obtain
    $$O_\psi(1,f) = \int_{(N\cap H)\backslash H} \int_{\mathrm{Temp}(G)}W_{f, \pi}(h,1)d\mu_G(\pi)dh,$$
    where we use the same measure normalization as in \cite{RBPPlanch}.
    We then use Corollary 3.51 in \cite{RBPPlanch}, to write
    $$O_\psi(1,f) = \int_{\mathrm{Temp_{gen}}(U(n))}\beta(W_{f, BC_n(\pi')}^1)d\mu_{U(n)}(\pi'),$$
    and thus, 
    $$O_\psi(1,f) = \int_{\mathrm{Temp_{gen}}(U(n))}I_{BC_n(\pi')}(f)d\mu_{U(n)}(\pi').$$
    On the unitary side, we have $(N'\times N')_1 \cong N'$, therefore
    \begin{align*}
        O_{\psi'}(1,f') & =\int_{(N'\times N')_1\backslash N'\times N'} f'(n_1^{-1}n_2)\psi'(n_1n_2^{-1})dn_1dn_2, \\
        & = \int_{ N'} f'(n)\psi'(n)dn.
    \end{align*}
    Using again the Plancherel-Whittaker formula, Proposition 2.142 in \cite{RBPPlanch}, we get
    $$O_{\psi'}(1,f') = \int_{\mathrm{Temp_{gen}}(U(n))}W_{f', \pi}(1,1)d\mu_{U(n)}(\pi),$$
    and therefore
    $$O_{\psi'}(1,f') = \int_{\mathrm{Temp_{gen}}(U(n))} J_{\pi}(f')d\mu_{U(n)}(\pi).$$

    Finally, if $f\in C_c^\infty(G)$ is the transfer of $f'\in C^\infty_c(U(n))$, then we have
    $$I_{BC_n(\pi)}(f) = J_\pi(f'),$$
    which, according to the previous calculations, implies 
    $$O_{\psi}(1,f) = O_{\psi'}(1,f').$$

\end{proof}

\pagebreak
\nocite{*}
\bibliographystyle{plain}
\bibliography{GLn_corrige}

\end{document}